\newtheorem{theorem}{Theorem}
\newtheorem{claim}{Claim}
\newtheorem{definition}{Definition}
\newtheorem{example}{Example}
\newtheorem{lemma}{Lemma}
\newtheorem{proposition}{Proposition}
\newtheorem{remark}{Remark}
\numberwithin{equation}{section}
\title{Singularities of plane complex curves and limits of K\"ahler metrics with cone singularities. I: Tangent Cones}
\author{Martin de Borbon}
\date{OCTOBER, 2016}
\begin{document}

\begin{abstract}

The goal of this article is to provide a construction and classification, in the case of two complex dimensions, of the possible tangent cones at points of limit spaces of non-collapsed sequences of K\"ahler-Einstein metrics with cone singularities. The proofs and constructions are completely elementary, nevertheless they have an intrinsic beauty. In a few words; tangent cones  correspond to spherical metrics with cone singularities in the projective line by means of the K\"ahler quotient construction with respect to the $S^1$-action generated by the Reeb vector field, except in the irregular case $\mathbb{C}_{\beta_1} \times \mathbb{C}_{\beta_2}$ with \( \beta_2 / \beta_1 \notin \mathbb{Q} \). 

\end{abstract}

\maketitle


\section{Introduction}
K\"ahler-Einstein (KE) metrics, and more generally constant scalar curvature and extremal K\"ahler metrics, are canonical metrics on polarized projective varieties and serve as a bridge between differential and algebraic geometry. More recently, after fundamental work of Donaldson \cite{donaldson1}, much of the theory has been extended to the setting of KE metrics with cone singularities along a divisor -which were previously introduced by Tian in \cite{Tian}-. A remarkable application is the proof of existence of KE metrics on K-stable Fano manifolds, through the deformation of the cone angle method (see \cite{CDS0}); but besides that, KE metrics with cone singularities (KEcs) have intrinsic interest -as canonical metrics on pairs of  projective varieties together with  divisors-. 

A major achievement in K\"ahler geometry in the past few years is the proof of a conjecture of Tian -on uniform lower bounds on Bergman kernels- which endows Gromov-Hausdorff limits, of non-collapsed sequences of smooth KE metrics on projective varieties, with an induced \emph{algebraic} structure -see \cite{DS}-. 
There is a strong interaction between the non-collapsed metric degenerations on the differential geometric side and the so-called log terminal singularities on the algebraic counterpart. The situation is better understood in two complex dimensions; Odaka-Spotti-Sun \cite{OSS} have shown that  the Gromov-Hausdorff compactifications of KE metrics on Del Pezzo surfaces agree with algebraic ones. One would expect then parallel results for  KEcs. The new feature is that the curves along which the metrics have cone singularities might now degenerate;  and we want to relate the metric degeneration with the theory of singularities of plane complex curves. This paper is a first step along this road and we concentrate in the study of tangent cones at points of limit spaces. 

Our main results are Propositions \ref{Hfm}, \ref{Sfm} and \ref{Clas} that follow. Proposition \ref{Sfm} follows immediately from \ref{Hfm}; while \ref{Clas} has already been established in \cite{panov} and its proof is included here only for the sake of completeness. The main interest is therefore in \ref{Hfm}.

We work on $\mathbb{C}^2$ with standard complex coordinates $z, w$. Let $d \geq 2$ and take $L_j = \{ l_j (z, w) =0 \}$ for $j=1, \ldots, d$ to be $d$ distinct complex lines through the origin  with defining linear equations $l_j$. Let $\beta_1, \ldots, \beta_d \in (0,1)$ satisfy the Troyanov condition
\begin{equation} \label{Tcondition}
0<2-d+\sum_{j=1}^d \beta_j < 2\min_{i} \beta_i 
\end{equation}
if $d \geq 3$ and $\beta_1=\beta_2$ if $d=2$.

\begin{proposition} \label{Hfm}
 There is a unique K\"ahler cone metric $g_F$ on $\mathbb{C}^2$ with apex at $0$ such that
 \begin{enumerate}
 	\item \label{item1} Its Reeb vector field generates the circle action $e^{it}(z,w)=(e^{ \frac{i t}{c}}z, e^{ \frac{it}{c}}w)$ for some constant $c>0$.
 	\item It has cone angle $2\pi \beta_j$ along $L_j$ for $j=1, \ldots, d$.
 	\item \label{item3} Its volume form is 
 	\begin{equation*} 
 	\mbox{Vol}(g_F) = |l_1|^{2\beta_1 -2} \ldots |l_d|^{2\beta_d -2} \frac{dzdw \overline{dzdw}}{4}.
 	\end{equation*}	
 \end{enumerate}
\end{proposition}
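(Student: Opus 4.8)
The plan is to reduce the construction and uniqueness of $g_F$ to the corresponding statement for a spherical metric with prescribed cone angles on $\mathbb{CP}^1$, by passing to the K\"ahler quotient with respect to the circle action of item~\eqref{item1}; the existence and uniqueness of that spherical metric is exactly Troyanov's theorem, whose hypotheses are precisely condition~\eqref{Tcondition} (together with $\beta_1=\beta_2$ for $d=2$, the football case).

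First I would reformulate item~\eqref{item3}. Writing $\omega_F = i g_{j\bar k}\,dz^j\wedge d\bar z^k$, the Ricci form of $\omega_F$ is $-i\partial\bar\partial\log\det(g_{j\bar k})$, and \eqref{item3} says $\log\det(g_{j\bar k}) = \sum_j (\beta_j-1)\log|l_j|^2 + \mathrm{const}$, which is pluriharmonic on $\mathbb{C}^2\setminus\bigcup_j L_j$; hence \eqref{item3} forces $g_F$ to be Ricci-flat off the lines. Conversely, a Ricci-flat K\"ahler cone metric with the cone angles of item~(2) has volume form of exactly this shape up to a positive constant — comparing homogeneities under the cone dilation shows that the only ambiguity is an overall positive scalar (and, when $d=2$, also the one-parameter family produced by the biholomorphisms $(z,w)\mapsto(\lambda z,w)$, which fix the two lines and commute with the circle action), and this is pinned down by \eqref{item3}. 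So items~\eqref{item1}--\eqref{item3} are equivalent to asking that $g_F$ be a Ricci-flat K\"ahler cone metric on $\mathbb{C}^2$ with apex at $0$, Reeb field as in \eqref{item1}, cone angle $2\pi\beta_j$ along $L_j$, and volume form exactly as in \eqref{item3}.

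Now the quotient. Write $\omega_F = \tfrac i2 \partial\bar\partial r^2$; item~\eqref{item1} says that $r^2$ is invariant under the circle action and homogeneous of some degree $2c$ under $(z,w)\mapsto(\mu z, \mu w)$, so $\log r^2 = c\log(|z|^2+|w|^2) + h$ with $h$ of degree $0$, i.e.\ a function pulled back from $\mathbb{CP}^1$. Then $\tfrac i2\partial\bar\partial\log r^2 = \pi^*\omega_S$ for $\pi\colon\mathbb{C}^2\setminus\{0\}\to\mathbb{CP}^1$ and a K\"ahler metric $\omega_S = c\,\omega_{FS} + \tfrac i2\partial\bar\partial h$ on $\mathbb{CP}^1$, where $\omega_{FS}$, the descent of $\tfrac i2\partial\bar\partial\log(|z|^2+|w|^2)$, has area $\pi$. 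Near $p_j := \pi(L_j\setminus\{0\})$ an affine coordinate differs from a transverse coordinate along $L_j$ by a nonvanishing factor, so $\omega_S$ has cone angle $2\pi\beta_j$ at $p_j$; and Ricci-flatness of $g_F$ translates into the transverse K\"ahler--Einstein equation $\mathrm{Ric}(\omega_S) = 4\,\omega_S$, i.e.\ $\omega_S$ is a spherical (constant curvature $4$) cone metric. By Gauss--Bonnet such an $\omega_S$ has area $\tfrac\pi2(2-d+\sum_j\beta_j)$, and since $[\omega_S] = c\,[\omega_{FS}]$ this forces $c = \tfrac12(2-d+\sum_j\beta_j)$; in particular $2-d+\sum_j\beta_j>0$, so $c>0$. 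Conversely, given a spherical cone metric $\omega_S$ on $\mathbb{CP}^1$ with cone angles $2\pi\beta_j$ at the $p_j$, one reads off $c = \mathrm{Area}(\omega_S)/\pi$, solves $\tfrac i2\partial\bar\partial h = \omega_S - c\,\omega_{FS}$ on $\mathbb{CP}^1$ ($h$ unique up to a constant fixed by \eqref{item3}), forms the homogeneous function $r^2 = (|z|^2+|w|^2)^c e^{h}$, and checks that $\omega_F := \tfrac i2\partial\bar\partial r^2$ is positive — it equals $r^2\bigl(\tfrac i2\partial\log r^2\wedge\bar\partial\log r^2 + \pi^*\omega_S\bigr)$, the sum of the positive rank-one form along the $\mathbb{C}^*$-fibres and the pullback of the metric transverse to them — and is a Ricci-flat K\"ahler cone metric on $\mathbb{C}^2$ with apex at $0$ realizing items~\eqref{item1}--\eqref{item3}.

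It then remains to produce $\omega_S$: a spherical metric on $S^2\cong\mathbb{CP}^1$ with cone angles $2\pi\beta_j$ at the $d$ prescribed points $p_1,\dots,p_d$. By Troyanov's theorem on prescribing constant positive curvature with conical singularities, this exists and is unique exactly when $0 < 2-d+\sum_j\beta_j < 2\min_i \beta_i$, which is \eqref{Tcondition}; for $d=2$ it is the football and forces $\beta_1=\beta_2$. Feeding the unique such $\omega_S$ through the quotient construction yields $g_F$, and the equivalences above give uniqueness. I expect the bulk of the work to lie in this middle step: making the correspondence between circle-invariant K\"ahler cone metrics on $\mathbb{C}^2$ and transverse K\"ahler metrics on $\mathbb{CP}^1$ precise across the cone divisors $L_j$ and the apex, checking that the reconstructed $r^2$ genuinely defines a cone metric with cone angle exactly $2\pi\beta_j$ along $L_j$ and with the volume form of \eqref{item3}, and tracking normalizations so that the Reeb field is exactly the one in \eqref{item1} with $c = \tfrac12(2-d+\sum_j\beta_j)$. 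The one nonlinear ingredient, the construction of $\omega_S$, is Troyanov's theorem and can be quoted.
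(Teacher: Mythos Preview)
Your proposal is correct and corresponds closely to what the paper itself sketches as an \emph{alternative} approach in its Subsection ``Hermitian metrics on line bundles'': write the potential as $r^2 = a\,h^c$ for a Hermitian metric $h$ on $\mathcal{O}_{\mathbb{CP}^1}(-1)$, and reduce the cone equation to the constant curvature (Liouville) equation on $\mathbb{CP}^1$, which is exactly Troyanov's theorem.

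The paper's \emph{main} proof takes a different, more explicit route through the link. It first builds a connection $\alpha$ on the Hopf bundle $S^3\to\mathbb{CP}^1$ with curvature $(1/2c)K_g\,dV_g$ and trivial holonomy around the cone points (Claim~\ref{connection claim}), lifts the spherical metric $g$ on $\mathbb{CP}^1$ to a spherical metric $\overline g = g + c^2\alpha^2$ on $S^3$ with cone singularities along the Hopf circles (Lemma~\ref{3sphere}), and sets $g_F = dr^2 + r^2\overline g$. Then it defines the complex structure $I$ by hand on horizontal/vertical frames, checks integrability and closedness of $\omega_F$ directly, and produces the holomorphic coordinates $(z,w)$ by solving the Cauchy--Riemann equations, obtaining $w = c^{1/2c}r^{1/c}e^{u/2c}e^{it}$, $z=\xi w$; the volume form~\eqref{item3} is then verified by an explicit Jacobian computation. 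Uniqueness is read off from equations~\eqref{form1}--\eqref{form2}, which recover the spherical metric from $g_F$ and invoke the uniqueness in Theorem~\ref{TLT}.

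What each buys: your route is shorter and more conceptual, and makes the role of the transverse K\"ahler--Einstein equation transparent; the paper's route is more hands-on, giving the explicit change of coordinates between cone and complex pictures and making the cone-singularity structure along each $L_j$ manifest via the local model~\eqref{loc2}. The point you flag as ``the bulk of the work'' --- checking that the reconstructed $r^2$ really has cone angle $2\pi\beta_j$ along $L_j$ --- is precisely what the paper's $S^3$ detour makes concrete. Both proofs rest on Troyanov's theorem for the nonlinear input.
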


Item \ref{item3} implies that the metric $g_F$ is Ricci-flat and, since it is a Riemannian cone of real dimension four, it must be flat. Item \ref{item1} on the Reeb vector field implies that the maps $m_{\lambda}(z,w)=(\lambda z, \lambda w)$ for $\lambda >0$ must act by scalings of the metric, so that 

\begin{equation*} 
m_{\lambda}^{*} g_F = \lambda^{2c} g_F .
\end{equation*}
Condition \ref{item3} on the volume form implies that 
\begin{equation} \label{number c}
c=1-\frac{d}{2} +\sum_{j=1}^d \frac{\beta_j}{2};
\end{equation}
note that $0<c<1$.

 We move on to a slightly different situation. Take co-prime integers  $1 \leq p < q$. Let $d \geq 2$  and  $C_j = \{z^q =a_j  w^p \}$, $a_j \in \mathbb{C}$ for $j=1, \ldots, d-2$, be distinct complex curves; let $\beta_1, \ldots, \beta_{d-2} \in (0,1)$ and $\beta_{d-1}, \beta_d \in (0,1]$ be such that $\beta_1, \ldots, \beta_{d-2}, (1/q)\beta_{d-1}, (1/p)\beta_d$ satisfy the Troyanov condition \ref{Tcondition} if $ d\geq 3$ and $\beta_{d-1} / q = \beta_d /p$ if $d=2$. 

\begin{proposition} \label{Sfm}
There is a unique K\"ahler cone metric $\tilde{g}_F$ on $\mathbb{C}^2$ with apex at $0$ such that
	 \begin{enumerate}
	 	\item Its Reeb vector field generates the circle action $e^{it}(z,w)=(e^{ \frac{ipt}{\tilde{c}}}z, e^{ \frac{iqt}{\tilde{c}}}w)$ for some constant $\tilde{c}>0$.
	 	\item It has cone angle $2\pi \beta_j$ along $C_j$ for $j=1, \ldots, d-2$, $2\pi \beta_{d-1}$ along $\{z=0\}$ and $2 \pi \beta_d$ along $\{w=0\}$.
	 	\item Its volume form is 
	 	\begin{equation*} 
	 	\mbox{Vol}(\tilde{g}_F) = |z^q - a_1 w^p|^{2\beta_1 -2} \ldots |z^q - a_{d-2}w^p|^{2\beta_{d-2} -2} |z|^{2\beta_{d-1} -2} |w|^{2\beta_d -2} \frac{dzdw \overline{dzdw}}{4}.
	 	\end{equation*}
	 \end{enumerate}	
\end{proposition}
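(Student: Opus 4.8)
The plan is to deduce Proposition~\ref{Sfm} from Proposition~\ref{Hfm} via the finite branched cover
\[
\Psi\colon \mathbb{C}^2_{(z,w)}\longrightarrow \mathbb{C}^2_{(u,v)},\qquad \Psi(z,w)=(z^q,w^p).
\]
Since $1\le p<q$ are coprime, $\Psi$ is Galois of degree $pq$ with deck group $\Gamma=\mu_q\times\mu_p$ acting by $(z,w)\mapsto(\zeta z,\xi w)$, $\zeta^q=\xi^p=1$, and it is ramified exactly over the two coordinate axes, with ramification index $q$ over $\{u=0\}$ and $p$ over $\{v=0\}$. Moreover $\Psi$ pulls back the line $\{u=a_jv\}$ to $C_j=\{z^q=a_jw^p\}$ for $j=1,\dots,d-2$, pulls back $\{u=0\}$ and $\{v=0\}$ to $\{z=0\}$ and $\{w=0\}$, and intertwines the weighted scaling $(z,w)\mapsto(s^pz,s^qw)$ with the diagonal scaling $(u,v)\mapsto(s^{pq}u,s^{pq}v)$ (hence also the associated circle actions). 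Under this dictionary the hypothesis of Proposition~\ref{Sfm}---that $\beta_1,\dots,\beta_{d-2},\beta_{d-1}/q,\beta_d/p$ satisfy~\eqref{Tcondition}, or the equality if $d=2$---is exactly the hypothesis of Proposition~\ref{Hfm} for the $d$ distinct lines $\{u=a_1v\},\dots,\{u=a_{d-2}v\},\{u=0\},\{v=0\}$ with those cone angles, all of which lie in $(0,1)$ (in the one exceptional case $p=1$ and $\beta_d=1$ the line $\{v=0\}$ carries angle $1$, so it drops out and one applies Proposition~\ref{Hfm} to the remaining $d-1$ lines, with an equivalent Troyanov condition).

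For existence I would let $g_F$ be the metric produced by Proposition~\ref{Hfm} for that arrangement, put $\tilde g_F:=\Psi^*g_F$, and rescale by the positive constant---coming from $du\,dv=pq\,z^{q-1}w^{p-1}\,dz\,dw$---that makes its volume form equal the right-hand side of item~(3). The pullback of a flat K\"ahler cone metric with cone singularities by a finite branched cover is again one: smooth off the ramification locus, and across it with the cone angle along each ramification divisor multiplied by the ramification index, the apex remaining an apex (the hypothesis $\beta_{d-1},\beta_d\le 1$ is what keeps the resulting angles in $(0,1]$). Thus the Reeb field of $\tilde g_F$ generates the weighted circle action because $\Psi$ intertwines the scalings (one computes $\tilde c=pq\,c$, with $c$ as in~\eqref{number c}); its cone angle along $C_j$ is $2\pi\beta_j$ since $\Psi$ is unramified there; its cone angles along $\{z=0\}$ and $\{w=0\}$ are $2\pi\cdot q\cdot(\beta_{d-1}/q)=2\pi\beta_{d-1}$ and $2\pi\cdot p\cdot(\beta_d/p)=2\pi\beta_d$ (pulling a two-dimensional cone of angle $2\pi\gamma$ back through $t\mapsto t^e$ gives a cone of angle $2\pi e\gamma$); and item~(3) is just the change of variables applied to item~(3) of Proposition~\ref{Hfm}.

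For uniqueness the key observation is that $\Gamma$ lies inside the Reeb circle $S^1=\{(z,w)\mapsto(e^{ipt/\tilde c}z,e^{iqt/\tilde c}w)\}$: given $\zeta^q=\xi^p=1$, the system $pt/\tilde c\equiv\arg\zeta$, $qt/\tilde c\equiv\arg\xi\pmod{2\pi}$ is solvable in $t$ precisely because $\gcd(p,q)=1$. Hence any metric $\tilde g$ satisfying (1)--(3) is invariant under its own Reeb flow, so under $\Gamma$, and therefore descends to a metric $g$ on $\mathbb{C}^2_{(u,v)}=\mathbb{C}^2_{(z,w)}/\Gamma$. Running the above backwards, $g$ is a flat K\"ahler cone metric whose Reeb field generates the diagonal circle, with cone angle $2\pi\beta_j$ along $\{u=a_jv\}$ and cone angles $2\pi\beta_{d-1}/q$, $2\pi\beta_d/p$ along the axes (the quotient of a cone of angle $2\pi\gamma$ by a rotation of order $e$ has angle $2\pi\gamma/e$, which here is $\le 2\pi$), and with the volume form of item~(3) of Proposition~\ref{Hfm} after the overall constant is fixed. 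By the uniqueness half of Proposition~\ref{Hfm}, $g=g_F$, whence $\tilde g=\Psi^*g_F=\tilde g_F$.

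The routine change-of-variables identities should be painless. The two points genuinely needing care are that $\Psi^*g_F$---and dually the $\Gamma$-quotient of a competitor---really is a K\"ahler cone metric with cone singularities in the precise sense of the paper, with nothing pathological happening along the ramification divisors or at the apex; and the elementary arithmetic fact $\Gamma\subset S^1$, which is exactly what lets the uniqueness argument descend to the situation of Proposition~\ref{Hfm}.
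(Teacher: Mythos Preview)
Your proof is correct and takes essentially the same approach as the paper: pull back the metric $g_F$ of Proposition~\ref{Hfm} via the branched cover $(z,w)\mapsto(z^q,w^p)$. You in fact give more detail than the paper does---your uniqueness argument, via the observation that the deck group $\mu_q\times\mu_p$ sits inside the Reeb circle so that any competitor descends to the quotient, is spelled out explicitly, whereas the paper leaves this (and the overall $(pq)^2$ constant in the volume form) implicit.
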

Proposition \ref{Sfm} follows from \ref{Hfm}, after pulling back by the map \( (z, w) \to (z^q, w^p) \).
Similar comments as those after Proposition \ref{Hfm} apply. For $\lambda >0$, let $\tilde{m}_{\lambda}(z,w)=(\lambda^p z, \lambda^q w)$. Then $\tilde{m}_{\lambda}^{*} \tilde{g}_F = \lambda^{2\tilde{c}} \tilde{g}_F$ with $\tilde{c} = pq \left(  1-d/2 +\sum_{j=1}^{d-2} \beta_j /2 + (1/2q)\beta_{d-1} + (1/2p)\beta_d \right) $. It is straightforward to include the case of curves like \( C = \{ z^n = w^m \} \)  with $m$ and $n$ not necessarily co-prime; simply let $m= dp$ and $n=dq$ with $p$ and $q$ co-prime, so that \( C= \cup_{j=1}^d \{ z^q = e^{2\pi i j / d} w^p \} \).
	
The last result asserts that Propositions \ref{Hfm} and \ref{Sfm} provide a complete list, up to finite coverings, of the Ricci-flat K\"ahler cone metrics with cone singularities (RFKCcs) in two complex dimensions, except for one case.

\begin{proposition} \label{Clas}
	Let $g_C = dr^2 + r^2 g_S$ be a RFKCcs and assume that its link is diffeomorphic to the 3-sphere; then there is an (essentially unique) holomorphic isometry of $((0, \infty) \times S^3, I, g_C)$ with one of the following 
	
	\begin{enumerate}
		
		\item  \emph{Regular case.} A metric $g_F$ given by Proposition \ref{Hfm}. 
		
		\item  \emph{Quasi-regular case.} A metric $\tilde{g}_F$ given by Proposition \ref{Sfm}. 
		
		\item  \emph{Irregular case.} $\mathbb{C}_{\beta_1} \times \mathbb{C}_{\beta_2}$ for some $0 < \beta_1 <1$ and $0 < \beta_2 \leq 1 $ with $\beta_1 / \beta_2 \notin \mathbb{Q}$.
		
	\end{enumerate}
	
\end{proposition}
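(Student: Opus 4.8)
The plan is the one announced in the abstract: to study the Reeb vector field $\xi = J(r\partial_r)$ of $g_C$ and perform the K\"ahler quotient by the flow it generates. First I would record two structural facts. As already noted, a RFKCcs in complex dimension two is flat on the complement of its cone-singularity divisor $\Delta$: being a Ricci-flat Riemannian cone forces the link $(S^3,g_S)$ to have constant sectional curvature $1$ on its smooth locus, so $g_C$ is locally Euclidean there. Moreover $\xi$ is a unit Killing field on the link and generates a flow by holomorphic isometries of $\mathbb{C}^2$ fixing the origin; the closure of this flow is a torus $T$, of rank at most $2$ (after Bochner linearization at $0$, $T$ is conjugate into a maximal torus of $U(2)$) and of rank exactly $1$ or $2$ since $\xi\neq 0$. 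Rank $1$ is the quasi-regular case, with the regular case a sub-case; rank $2$ is the irregular case.

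Next I would dispose of the irregular case $T=U(1)^2$. Then $U(1)^2$ acts on $(\mathbb{C}^2,I,g_C)$ by holomorphic isometries commuting with $r\partial_r$, with $\xi$ of irrational slope in $\mathrm{Lie}(T)$; its fixed locus on $S^3$ consists of two circles, which in suitable holomorphic coordinates $(z,w)$ are cut out by $z=0$ and $w=0$, and $\Delta\subseteq\{zw=0\}$ by invariance. A flat, $U(1)^2$-invariant K\"ahler cone metric on $\mathbb{C}^2$ with cone singularities only along the two axes depends only on $(|z|,|w|)$ and, by a direct computation (or a de Rham splitting on the flat locus, the two invariant totally geodesic slices splitting the tangent bundle), equals the Riemannian product $\mathbb{C}_{\beta_1}\times\mathbb{C}_{\beta_2}$ of two flat two-dimensional cones; and $\beta_1/\beta_2\notin\mathbb{Q}$ precisely because the Reeb flow fails to be periodic. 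This is case (3).

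In the quasi-regular case the flow of $\xi$ is a locally free $U(1)$-action on $S^3$, i.e. a Seifert fibration, whose base orbifold is $\mathbb{P}^1$ carrying at most two orbifold points, of orders $q,p$ equal to the multiplicities of the two exceptional fibres, lying over the two components of the fixed locus of the induced $\mathbb{C}^*$-action on $\mathbb{C}^2$ --- exactly the picture $(z,w)\mapsto(z^q,w^p)$ used after Proposition \ref{Sfm}. The transverse K\"ahler reduction of $g_C$ by $\xi$ then carries a transverse K\"ahler--Einstein metric of positive Einstein constant, i.e. a spherical orbifold metric on $\mathbb{P}^1$, with cone singularities at the images of the components of $\Delta$ together with the two distinguished points, of cone angles $2\pi\beta_j$ as in Proposition \ref{Sfm} (the ones at the distinguished points involving $p,q$). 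Gauss--Bonnet yields the lower inequality in \ref{Tcondition} and the obstruction to a reducible spherical cone metric yields the upper one, so the $\beta_j$ satisfy the Troyanov condition; Troyanov's theorem then gives that this spherical cone metric on $\mathbb{P}^1$ is the unique one with its cone data. Finally $g_C$ is reconstructed from this quotient by the inverse K\"ahler quotient (equivalently, it is the flat cone over the associated Sasakian $U(1)$-orbibundle); since the metrics of Propositions \ref{Hfm} and \ref{Sfm} realize the same quotient data, their uniqueness statements produce the asserted (essentially unique) holomorphic isometry, the regular case $p=q=1$ corresponding to the Hopf fibration.

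The main obstacle I anticipate is the quasi-regular step: carrying out the K\"ahler reduction in the presence of cone singularities, and correctly matching the transverse cone angles of $g_C$ --- including the orbifold contributions at the two spindle points coming from the Seifert invariants --- with the cone angles $2\pi\beta_j$ of the quotient spherical metric, together with checking that the Troyanov inequalities \ref{Tcondition} are forced rather than merely sufficient. By contrast the product splitting in the irregular case should be comparatively soft once the $U(1)^2$-symmetry is in hand, since one is then manipulating a flat metric.
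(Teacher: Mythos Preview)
Your overall architecture is the same as the paper's: split according to the rank of the closure $T$ of the Reeb flow, push forward to a spherical metric on $\mathbb{P}^1$ in the periodic case, and identify a product in the irregular case. Two points of comparison are worth making.

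\textbf{Irregular case.} The paper does not argue via Bochner linearization at the apex or via a de Rham splitting. Instead it uses the developing (\emph{enveloping}) map $E$ of the flat structure, defined on the universal cover of $C\setminus L$ and taking values in the Euclidean $\mathbb{C}^2$. The $\mathbb{R}^2$-action lifts, makes $E$ equivariant, and factors through a $T^2$ acting by rotations on the two $\mathbb{C}$-factors; the branching locus of $E$, being a $T^2$-invariant union of lines through $0$, is forced to be $\{z_1z_2=0\}$; hence $E$ restricts to a covering map over $\mathbb{C}^2\setminus\{z_1z_2=0\}$ and $g_C$ is a product of two $2$-cones. Your approach presupposes that the complex structure extends over the apex so that one is already sitting in $\mathbb{C}^2$ with the standard $T^2$-action before computing; this is exactly what the developing-map argument supplies without extra hypotheses. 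If you prefer your route, you should say explicitly how the complex structure extends to a smooth point at $r=0$ (or replace the Bochner step by the elementary observation that an effective isometric torus action on the $3$-sphere link has rank at most $2$).

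\textbf{Regular/quasi-regular case.} Here the paper is actually simpler than you anticipate. It invokes the classification of Seifert fibrations of $S^3$ to put the $S^1$-action in standard form, pushes $g_S$ down to obtain a constant-curvature-$4$ metric on $S^2$ with cone points, and then appeals to the uniqueness statements of Lemmas~\ref{3sphere} and~\ref{slema} to conclude that $g_S$ (hence $g_C$) matches one of the constructed metrics. There is no need to \emph{verify} the Troyanov inequalities: the quotient spherical metric exists, and Troyanov's condition is necessary for existence, so it holds automatically. Your stated ``main obstacle'' thus dissolves; the cone-angle bookkeeping at the two exceptional fibres is absorbed into the branched cover $\Psi_{(p,q)}$ already used in Section~\ref{seifertsection}.
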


The $g_S$ are spherical metrics on the 3-sphere with cone singularities along Hopf circles and $(p, q)$ torus knots. In Section \ref{SECTS}  we construct the $g_S$ as lifts of spherical metrics with cone singularities on the projective line by means of the Hopf map in the regular case and a Seifert map in the quasi-regular case. Propositions \ref{Hfm}, \ref{Sfm} and \ref{Clas} are then proved in Section \ref{SECTC}. 
Finally, in Section \ref{CURVES}, we discuss relations to the theory of singularities of plane complex curves and algebraic geometry.

After writing a first version of this paper, the author red Panov's article Polyhedral K\"ahler Manifolds \cite{panov}. Our results overlap substantially with the content of Section 3 in \cite{panov} and we refer to this article for a beautiful geometric exposition. Nevertheless our approach to Proposition \ref{Hfm} is slightly different from Panov's;  our proof goes along the lines of the well-known Calabi ansatz and suggests a higher dimensional generalization, replacing the spherical metrics with K\"ahler-Einstein metrics of positive Ricci curvature.

\subsection*{Acknowledgments}  This article contains material from the author's PhD Thesis at Imperial College, founded by the European Research Council Grant 247331 and defended in December 2015. I wish to thank my supervisor, Simon Donaldson, for sharing his ideas with me. I also want to thank Song Sun and Cristiano Spotti for valuable conversations, and the Simons Center for Geometry and Physics for hosting me during their program on K\"ahler Geometry during October-November 2015.

\section{Background} \label{back}
Most of this section reviews well known material. In Subsection \ref{SPH MET} we recall the theory of spherical metrics on the projective line that we use. Subsection \ref{KE metrics} is about K\"ahler-Einstein metrics with cone singularities along a divisor.  Subsection \ref{K cones} collects standard facts on Riemannian cones which are also K\"ahler. Finally, Subsection \ref{RFKCcs} introduces the concept of Ricci-flat K\"ahler cone metrics with cone singularities.

\subsection{Spherical metrics with cone singularities on $\mathbb{CP}^1$ } \label{SPH MET}

Fix $ 0 < \beta <1$; on $\mathbb{R}^2 \setminus \lbrace 0 \rbrace$ with polar coordinates $(\rho, \theta)$ let  
\begin{equation}
g_{\beta} = d\rho^2 +  \beta^2 \rho^2 d\theta^2 ,
\end{equation}
this is the metric of a cone of total angle $2\pi \beta$. The apex of the cone is located at $0$ and $g_{\beta}$ is singular at this point.
The metric induces a complex structure on the punctured plane, given by an anti-clockwise rotation of angle $\pi/2$ with respect to $g_{\beta}$; a basic fact is that we can change coordinates so that this complex structure extends smoothly over the origin. Indeed,  setting
\begin{equation}
z = \rho^{1/\beta} e^{i\theta} \label{CHCO}
\end{equation}
we get
\begin{equation} \label{MODEL2}
g_{\beta} = \beta^2 |z|^{2\beta -2} |dz|^2 .
\end{equation}
We denote by $\mathbb{C}_{\beta}$  the complex plane endowed with the singular metric \ref{MODEL2}. 

Consider a Riemann surface $\Sigma$, a point $p \in \Sigma$ and a compatible metric $g$ on $\Sigma \setminus \{p\}$. 

\begin{definition} \label{TDEFINITION} (\cite{troyanov2})
We say that $g$ has cone angle $2\pi\beta$ at $p$ if for any holomorphic coordinate $z$ centered at $p$ we have that 
\begin{equation*}
g = e^{2u} |z|^{2\beta -2} |dz|^2 ;
\end{equation*}
with $u$ a smooth function in a punctured neighborhood of the origin which extends \emph{continuously} over $0$.
\end{definition}

There is an obvious extension of Definition \ref{TDEFINITION}  to the case of finitely many conical points. We are interested in the situation where $\Sigma = \mathbb{CP}^1$ and $g$ has  constant Gaussian curvature $1$ outside the singularities. In our state of affairs we can proceed more directly, giving a local model for the metric around the conical points. 

From now on we set $S^n = \{(x_1, \ldots, x_{n+1}) \in \mathbb{R}^{n+1}: \hspace{1mm} \sum_{i=1}^{n+1} x_i^2 =1\}$  the $n$-sphere, thought as a manifold; we write $S^n(1)$ for the n-sphere with its inherited round metric of constant sectional curvature $1$. 
Let $W$ be a wedge in $S^2(1)$ defined by two geodesics that intersect with angle $\pi\beta$. A local model for a spherical metric with a cone singularity  is  given by identifying two copies of $W$ isometrically along their boundary. The expression of this metric in geodesic polar coordinates $(\rho, \theta)$ centered at the singular point is 

\begin{equation} \label{spherical1}
d\rho^2 + \beta^2 \sin^2(\rho) d\theta^2.
\end{equation}
If we set $ \eta = \left( \tan (\rho/2) \right)^{1/\beta} e^{i\theta}$, our model metric writes as

\begin{equation} \label{spherical2}
4\beta^2 \frac{|\eta|^{2\beta-2}}{(1+|\eta|^{2\beta})^2} |d\eta|^2 .
\end{equation}

Let $p_1, \ldots, p_d$ be $d$ distinct points in $S^2$ and let $\beta_j \in (0, 1)$ for $j=1, \ldots, d$. We say that $g$ is a spherical metric on $S^2$ with cone singularities of angle $2\pi \beta_j$ at the points $p_j$ if $g$ is locally isometric to $S^2(1)$ in the complement of the $d$ points and around each point $p_j$ we can find polar coordinates in which $g$ agrees with \ref{spherical1} with $\beta= \beta_j$.
It follows from what we have said that any such a metric $g$ endows $S^2$ with the complex structure of the projective line with $d$ marked points which record the cone singularities. The correspondence which associates to a spherical metric on $S^2$ a configuration of points in the projective line is the key to the classification of the former, as Theorem \ref{TLT} below shows. Starting from the complex point of view we have the following:

\begin{definition}
	Let $ L_1, \ldots, L_d \in \mathbb{CP}^1$ be $d$ distinct points and $\beta_j \in (0,1)$ for $j=1, \ldots, d$. We say that $g$ is a compatible spherical metric  on $\mathbb{CP}^1$ with  cone singularities of angle $2\pi\beta_j$ at the  points $L_j$ if 
	$g$ is a compatible metric on  $\mathbb{CP}^1 \setminus \lbrace L_1, \ldots, L_d \rbrace$ of constant Gaussian curvature equal to $1$  and  around each singular point $L_j$ we can find a complex coordinate $\eta$ centered at the point in which $g$ is given by $\ref{spherical2}$ with $\beta=\beta_j$. 
\end{definition}

\begin{remark} \label{regularity sph met}
It is equivalent to say that $g$ has cone angle $2\pi \beta_j$ at the points $L_j$, in the sense of Definition \ref{TDEFINITION}, and constant Gaussian curvature $1$  on  $\mathbb{CP}^1 \setminus \lbrace L_1, \ldots, L_d \rbrace$. This equivalence is a consequence of the following local regularity statement: If $g$ is a compatible metric on a punctured disc $D \setminus \{0\} \subset \mathbb{C}$ of constant Gaussian curvature $1$ and cone angle $2\pi \beta$ at $0$; then there is a holomorphic change of coordinates around the origin in which $g$ agrees with \ref{spherical2}.  
\end{remark}

\begin{example}
	The simplest example is when $d=2$, by means of a M\"obius map we can assume that the cone singularities are located at $0$ and $\infty$. The expression \ref{spherical2} globally defines a spherical metric with cone angle $2\pi\beta$ at the given points, this space is also known as the `rugby ball'. It was shown by Troyanov \cite{troyanov1} that \ref{spherical2} is the only compatible spherical metric with cone angle $2\pi\beta$ at $0$ and $\infty$; a consequence of his work is that there are no such metrics with two cone singularities and different cone angle, in particular there can't be a single conical point.
\end{example}

\begin{example}
	We can construct a spherical metric $g$ with three cone singularities of angles   $2\pi \beta_1, 2\pi\beta_2$ and $2\pi\beta_3$ by doubling  a spherical triangle with interior angles $\pi \beta_1, \pi\beta_2$ and $\pi\beta_3$. It follows from elementary spherical trigonometry  that such a triangle $T$ exists and is unique up to isometry if and only if the following two conditions hold: 
	\begin{itemize}
		\item
		\begin{equation} \label{T1}
		\sum_{j=1}^3 \beta_j > 1.
		\end{equation}
		Indeed, the area of $T$ is equal to $\pi(\sum_{j=1}^3 \beta_j -1)$. 
		
		\item
		\begin{equation} \label{T2}
		1-\beta_i < \sum_{j \not= i} (1-\beta_j) \hspace{3mm} \mbox{for} \hspace{2mm}i=1,2,3.
		\end{equation}
		This is the triangle inequality applied to the polar  of $T$.
	\end{itemize}
	In complex coordinates the metric $g$ writes as $g=e^{2u}|dz|^2$ where $u$ is a real function of the complex variable $z$ and, by means of a M\"obius map, we can assume that the cone singularities are located at $0, 1$ and $\infty$. The metric $g$ has an obvious symmetry given by switching the two copies of $T$, which means that $u$ is invariant under the map $z \to \overline{z}$ and is determined by its restriction to the upper half plane. By means of stereographic projection we can think of the triangle $T$ as lying on the complex plane. Let $w=\Phi(z)$ be a Riemann mapping from the upper half plane to $T$, it is then clear that $g$ is the pullback of the standard round metric 
	$$ \frac{4}{(1+|w|^2)^2} |dw|^2$$
	by $\Phi$. It is a classical fact that such a map $\Phi$ is given as the quotient of two linearly independent solutions of the hypergeometric equation
	\begin{equation}
	z(1-z)w''+(c-(a+b+1)z)w'-abw=0
	\end{equation}
	with $\beta_1=1-c$, $\beta_2=a-b$ and $\beta_3=c-a-b.$
	
\end{example}

\begin{example}
	More generally we can consider a spherical convex polygon $P$ with $d$ edges and interior anngles $\pi\beta_1, \ldots, \pi\beta_d$; double it to obtain a spherical metric on $\mathbb{CP}^1$ with cone singularities at some points $L_1, \ldots, L_d$. These points are fixed by the symmetry that switches the two copies of $P$ and this implies that, up to a M\"obius map, we can assume that  the points $L_1, \ldots, L_d$ lie on the real axis. Same as before, the metric in complex coordinates is given by the pullback of the spherical metric by a Riemann mapping from the upper half plane to the polygon. When $d \geq 4$ most spherical metrics are not  doublings  of spherical polygons.
\end{example}

It is a fact that every spherical metric with cone singularities on $S^2$ is isometric to the boundary of a convex polytope inside $S^3(1)$, uniquely determined up to isometries of the ambient space; this includes the doubles of spherical polygons as degenerate cases where all the vertices of the polytope lie on a totally geodesic 2-sphere. Assume that $d\geq 3$; by means of a triangulation and  formulas \ref{T1} and \ref{T2} it is straightforward to show that a necessary condition for the existence of such a metric is that the Troyanov condition \ref{Tcondition} holds. 

Recall that $c$ denotes the number given by \ref{number c}, so that $2c = 2-d + \sum_{j=1}^d  \beta_j$.
By means of a triangulation and the formula for the area of a spherical triangle, it is easy to show that the total area of a spherical metric is given by $4\pi c$.
In algebro-geometric terms, the Troyanov condition is equivalent to say that the pair $(\mathbb{CP}^1, \sum_{j=1}^d (1-\beta_j) L_j)$ is log-K-polystable (see \cite{chili}) and the number $2c$ is the degree of the $\mathbb{R}$-divisor $-(K_{\mathbb{CP}^1} + \sum_{j=1}^d (1-\beta_j) L_j)$. The main result we want to recall is the following:

\begin{theorem} \label{TLT} (Troyanov \cite{troyanov2}, Luo-Tian \cite{luotian})
	Assume that $d\geq 3$, let $L_1, \ldots, L_d$ be $d$ distinct points in $\mathbb{CP}^1$ and let $\beta_j \in (0, 1)$ for $j=1, \ldots, d$. If the Troyanov condition \ref{Tcondition} holds, then there is a unique compatible spherical metric $g$ on $\mathbb{CP}^1$ with cone singularities of angle $2\pi\beta_j$ at the points $L_j$ for $1\leq j \leq d$. 
\end{theorem}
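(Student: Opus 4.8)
The plan is to recast the geometric problem as a scalar Liouville-type PDE on $\mathbb{CP}^1$ and solve it by the direct method of the calculus of variations, isolating the genuinely difficult positive-curvature uniqueness into a separate eigenvalue argument. First I would fix a smooth background metric $g_0$ in the conformal class of $\mathbb{CP}^1$ and absorb the conical data into a fixed weight. Choosing a function $u_0$, smooth away from the $L_j$, with $u_0 \sim (\beta_j-1)\log \mathrm{dist}(\cdot,L_j)$ near $L_j$ and whose smooth part of $-\Delta_0 u_0$ is constant (a Green's-function combination with the correct masses at the $L_j$, solvable since the total mass $\sum_j(\beta_j-1)$ is balanced against the constant), the ansatz $g=e^{2(u_0+v)}g_0$ turns the requirement ``Gaussian curvature $1$ off the singularities and cone angle $2\pi\beta_j$ at $L_j$'' into the single equation
\[
-\Delta_0 v = K_1\, e^{2v} - f_1, \qquad K_1 = e^{2u_0}\ge 0, \qquad \int_M f_1\,dA_0 = 4\pi c,
\]
for an honest function $v$ on $\mathbb{CP}^1$. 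Here $K_1\sim \mathrm{dist}(\cdot,L_j)^{2(\beta_j-1)}$ lies in $L^p$ for $p<\min_j(1-\beta_j)^{-1}$ precisely because $\beta_j>0$, and integrating the equation forces $\int_M K_1 e^{2v}\,dA_0 = 4\pi c$, which is the area dictated by Gauss--Bonnet. Solutions are critical points of
\[
J(v) = \tfrac12\int_M |\nabla v|^2\,dA_0 + \int_M f_1\, v\,dA_0 - 2\pi c\,\log\Big(\int_M K_1 e^{2v}\,dA_0\Big),
\]
which is invariant under $v\mapsto v+\mathrm{const}$; after normalizing $\int_M K_1 e^{2v}\,dA_0 = 4\pi c$ the Euler--Lagrange equation is exactly the displayed PDE.

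For existence I would minimize $J$ on $H^1(\mathbb{CP}^1,g_0)$, modulo constants. The one nontrivial analytic ingredient is a sharp weighted Moser--Trudinger inequality adapted to the conical weight $K_1$: because concentration is cheapest at the sharpest cone, the optimal constant is governed by $\min_j\beta_j$, giving $\log\int_M K_1 e^{2v}\,dA_0 \le \frac{1}{4\pi\min_j\beta_j}\int_M|\nabla v|^2\,dA_0 + C(1+\|v\|)$ after fixing the mean. Substituting into $J$ yields a leading quadratic term with coefficient $\tfrac12\big(1-\tfrac{c}{\min_j\beta_j}\big)$, so $J$ is coercive exactly when $c<\min_j\beta_j$, i.e.\ when $2c<2\min_i\beta_i$. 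This is precisely the right-hand Troyanov inequality \ref{Tcondition}; the left-hand inequality $2c>0$ is what makes $\bar\rho=4\pi c>0$ and the functional bounded below with the correct sign. Coercivity together with the compactness of $v\mapsto \int_M K_1 e^{2v}$ provided by Moser--Trudinger and weak lower semicontinuity delivers a minimizer $v$, and elliptic regularity makes it smooth away from the $L_j$ and continuous across them. Invoking Remark \ref{regularity sph met}, the resulting $g=e^{2(u_0+v)}g_0$ is a genuine compatible spherical metric realizing the local model \ref{spherical2} at each $L_j$.

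Uniqueness is the main obstacle, and it is real rather than formal: the curvature is positive, so the naive maximum principle applied to $w=u_1-u_2$ (which satisfies $-\Delta_0 w = e^{2u_2}(e^{2w}-1)$) produces no contradiction, and $J$ is not convex. Here I would follow Luo--Tian \cite{luotian} and exploit the hypothesis $\beta_j<1$ through a Bol-type (reverse) isoperimetric inequality, valid for metrics on $S^2$ whose curvature is $\le 1$ and all of whose cone angles are $\le 2\pi$. Such an inequality yields a lower bound on the first nonzero eigenvalue of the Laplacian of any admissible solution $g$, and the presence of $d\ge 3$ genuine conical points makes this bound strict, $\lambda_1(\Delta_g)>2$. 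Since $2$ is then not in the spectrum, the linearized Liouville operator $-\Delta_g-2$ has trivial kernel, so every solution is nondegenerate and hence isolated; combined with a continuity/degree argument over the connected space of Troyanov-admissible data (deforming to a model configuration), this forces the solution to be unique. The restriction to $d\ge 3$ and $\beta_j\in(0,1)$ is essential at both ends: for $d=2$ the relevant degeneration is the rugby ball, which exists only when $\beta_1=\beta_2$ and whose Möbius stabilizer breaks the variational compactness; and if some cone angle exceeded $2\pi$ the Bol inequality, and with it uniqueness, would fail. I expect the sharp weighted Moser--Trudinger constant (for existence) and the strict eigenvalue estimate from Bol's inequality (for uniqueness) to be the two places demanding the most care, each being exactly where the two halves of the Troyanov condition enter.
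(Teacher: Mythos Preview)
The paper does not supply a proof of Theorem~\ref{TLT}; it is stated as background and attributed to Troyanov and Luo--Tian, with only the equivalent Liouville-equation reformulation recorded afterward. There is therefore no ``paper's own proof'' to compare against.

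Your outline is the standard one from the cited references and is essentially sound. The existence half is Troyanov's variational argument: the weighted Moser--Trudinger inequality with sharp constant governed by $\min_j\beta_j$ is exactly what makes the functional $J$ coercive under the right-hand inequality in \eqref{Tcondition}, and the left-hand inequality $2c>0$ gives the correct sign. The uniqueness half is the Luo--Tian strategy, and you have identified the right mechanism (Bol's isoperimetric inequality forcing $\lambda_1(\Delta_g)>2$ once genuine cone points are present, hence nondegeneracy of the linearized operator). One point to watch: in the Luo--Tian paper the global step is phrased as showing that the forgetful map $\Pi:\tilde{\mathcal{P}}_d\to\tilde{\mathcal{M}}_d$ (which the present paper recalls after Theorem~\ref{TLT}) is a proper local homeomorphism between connected manifolds of the same dimension, hence a covering, and then checking it has degree one; your ``continuity/degree argument over the connected space of Troyanov-admissible data'' is this step, but properness (ruling out area concentration along a deformation) is where Bol's inequality does its real work, not only in the eigenvalue estimate. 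You flagged the two delicate points yourself, so the sketch is accurate as a plan.
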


\begin{remark}
	It is an easy consequence of the uniqueness part, that the set of orientation preserving isometries of $g$ agrees with the set of M\"obius maps $F$, which preserve the set $ \{L_1, \ldots, L_d\} $ and such that $F(L_i) = L_j$ only if $\beta_i = \beta_j$.
\end{remark}

Assume that  $ L_j = \lbrace z_1 = a_j z_2 \rbrace$ with $a_j \in \mathbb{C}$  for $j = 1, \ldots , d-1$ and $ L_d= \lbrace z_2=0 \rbrace$. Set $\xi = z_1/z_2$, then $ g = e^{2\phi} |d\xi|^2$ with $\phi$ a function of $\xi$. Recall that the Gaussian curvature of $g$ is given by

\begin{equation*}
K_g = -e^{-2\phi} \triangle \phi , 
\end{equation*}
where $\triangle = 4 \partial^2 / \partial \xi \partial \overline{\xi}$. Then Theorem \ref{TLT} is equivalent to the following statement: Let $a_1, \ldots, a_{d-1} \in \mathbb{C}$ and $\beta_1, \ldots, \beta_d \in (0, 1)$ satisfy the Troyanov condition \ref{Tcondition}; then there exists a unique function $\phi$ such that 

\begin{itemize}
	\item 
	Solves the Liuville equation 
	\begin{equation*}
	\triangle \phi = -e^{2\phi}, 
	\end{equation*}
	in $\mathbb{C} \setminus \{a_1, \ldots, a_{d-1}\}$. 
	\item
	\[ u = \phi - \sum_{j=1}^{d-1} (\beta_j -1) \log |\xi - a_j| \]
	is a continuous function in $\mathbb{C}$, and
	\[ \phi + (\beta_d +1) \log |\xi| \]
	is continuous at $\infty$.
	
\end{itemize}

Let us fix $\beta_1, \ldots, \beta_d \in (0, 1)$ satisfying the Troyanov condition \ref{Tcondition}.
Set $\tilde{\mathcal{P}}_d= \tilde{\mathcal{P}}_d (\beta_1, \ldots, \beta_d)$ to be the space of all boundaries of labeled $d$-vertex convex polytopes in the round  3-sphere with total angle of $2\pi\beta_j$ at $d$ distinct vertices,  modulo the ambient isometries; \ref{Tcondition} ensures that $\tilde{\mathcal{P}}_d$ is not empty. The space $\tilde{\mathcal{P}}_d$ is endowed with the Hausdorff topology.  Let $\tilde{\mathcal{M}}_d$ be the space of $d$ distinct ordered points in $\mathbb{CP}^1$ modulo the action of M\"obius transformations, this is a complex manifold of dimension $d-3$. 
Each element of $\tilde{\mathcal{P}}_d$ represents a spherical metric on $\mathbb{CP}^1$ with cone angle $2\pi \beta_j$ at $d$ distinct points. There is a natural map $\Pi : \tilde{\mathcal{P}}_d \to \tilde{\mathcal{M}}_d$ obtained  by recording the complex structure given by the metric. It is shown in \cite{luotian} that $\Pi$ is a homeomorphism.

\begin{figure}[ht!]
	\centering
	\includegraphics[width=150mm]{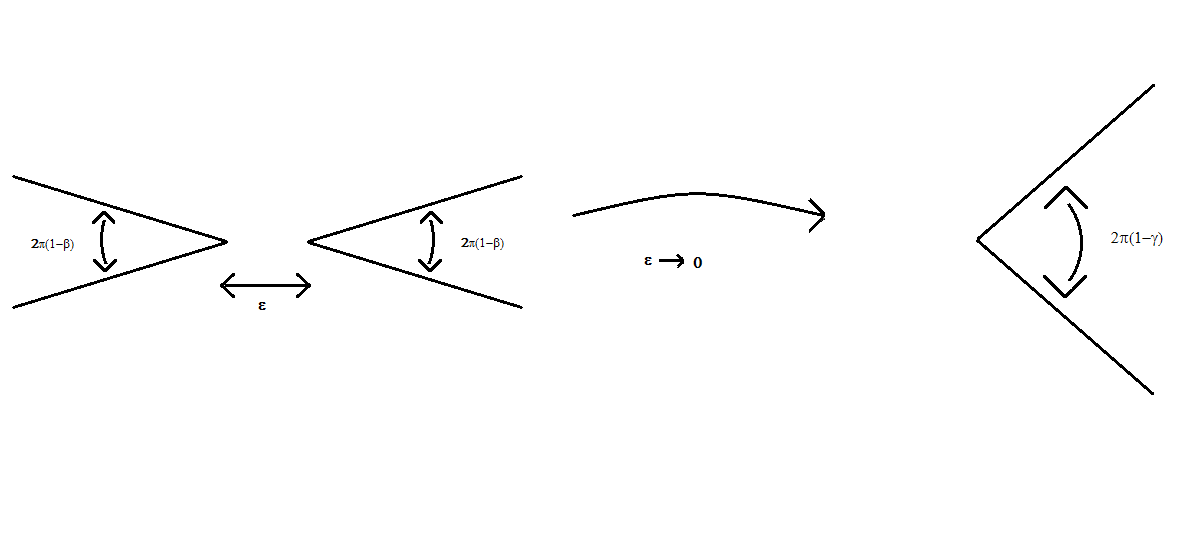}
	\caption{$1 - \gamma = 2 (1-\beta)$. When two cone singularities collide the complements of the angles add.}
	\label{Pacman}
\end{figure}

Consider the case when $\beta_1 = \beta_2 = \ldots = \beta_d = \beta$. Denote by $\mathcal{P}_d$ and $\mathcal{M}_d$ the quotients of $\tilde{\mathcal{P}}_d$ and $\tilde{\mathcal{M}}_d$ by the permutation group on $d$ elements, which corresponds to forgetting the labels. We have an induced homeomorphism $\Pi : \mathcal{P}_d \to \mathcal{M}_d$. 
The Hausdorff topology gives a natural compactification of $\mathcal{P}_d$, similarly the space $\mathcal{M}_d$ has a natural GIT compactification; it is then natural to ask whether $\Pi$ extends as an homeomorphism between these. A useful fact, established in \cite{luotian}, is that the Hausdorff limit of a sequence in $\mathcal{P}_d$ is the boundary of a spherical convex polytope with at most $d$ vertices. We look at  the simplest non-trivial case when $d=4$; the Troyanov condition \ref{Tcondition} is then equivalent to $1/2 < \beta < 1$. The space $\mathcal{M}_4$ of four unordered points on the Riemann sphere is isomorphic to $\mathbb{C}$ and the GIT compactification is isomorphic to the projective line, the extra point added  represents the configuration of two points counted with multiplicity two -this is the unique polystable point-. On the other hand if two cone singularities of angle $2\pi\beta$ collide; what remains is a single cone singularity of angle $2\pi \gamma$ with $\gamma= 2\beta-1$, see Figure \ref{Pacman} and \cite{PM}. Note that there is not any spherical triangle with angles $\pi \gamma, \pi \beta, \pi \beta$, since \ref{T2} would then imply that $1-\gamma = 2- 2\beta < (1- \beta) + (1-\beta)$. We conclude that if two of the vertices collide the other two must collide too; the Hausdorff compactification is obtained by adding a single point, represented by the `rugby ball' with two cone singularities of angle $2\pi \gamma$, which corresponds to the polystable configuration of two points in the projective line with multiplicity two.

\subsection{K\"ahler-Einstein  metrics with cone singularities along a divisor (KEcs)} \label{KE metrics}

We are concerned with metrics which are modeled, in transverse directions to a smooth divisor, by $g_{\beta}$. To begin with we take the product $\mathbb{C}_{\beta} \times \mathbb{C}^{n-1}$; if $(z_1, \ldots, z_n)$ are standard complex coordinates on $\mathbb{C}^n$  what we get is the model metric 
\begin{equation}
	g_{(\beta)} =  \beta^2 |z_1|^{2\beta -2} |dz_1|^2  + \sum_{j=2}^n |dz_j|^2 ,
\end{equation}
with a singularity along $D= \lbrace z_1 =0 \rbrace$.
Set $\lbrace v_1, \ldots, v_n \rbrace$ to be the  vectors 
\begin{equation} \label{VECTORS}
	v_1 = |z_1|^{1-\beta} \frac{\partial}{\partial z_1}, \hspace{3mm} v_j = \frac{\partial}{\partial z_j} \hspace{2mm} \mbox{for} \hspace{1mm} j = 2, \ldots n . 
\end{equation}
Note that, with respect to $g_{(\beta)}$, these vectors are orthogonal  and their length is constant.
We move on and consider the situation of a complex manifold $X$ of complex dimension $n$ and a smooth divisor $ D \subset X$.  Let $g$ be a smooth K\"ahler metric on $X \setminus D$ and let $p \in D$. Take  $(z_1, \ldots, z_n)$ to  be complex coordinates centered at $p$ such that $D = \lbrace z_1 =0 \rbrace$. In the complement of $D$ we have smooth functions $g_{i\overline{j}}$ given by $g_{i\overline{j}} = g (v_i, \overline{v}_j)$. Following Donaldson \cite{donaldson1} we give a definition of a K\"ahler metric with cone singularities which is well suited for the development of a Fredholm theory linearizing  the KE equation:
\begin{definition} \label{DEFINITION}
	We say that $g$ has cone angle $2\pi\beta$ along $D$
	if, for every $p \in D$ and holomorphic coordinates as above, the functions  $g_{i\overline{j}}$ admit a  H\"older continuous extension to $D$. We also require the matrix $(g_{i\overline{j}} (p))$ to be positive definite  and that  $g_{1\overline{j}} =0$  when $j \geq 2$ and $z_1=0$.
\end{definition}

It is straightforward to check that this definition is independent of the holomorphic chart $z_1, \ldots, z_n$. There is a K\"ahler potential $\phi \in C^{2, \alpha, \beta}$ (see \cite{donaldson1}) for $g$ around points of $D$. It can be shown that the vanishing condition,  $g_{1\overline{j}} =0$  for $j \geq 2$ at $z_1=0$, is a consequence of the other conditions; this is related to the behavior of the Green's function for the Laplacian of $g_{(\beta)}$ -see \cite{donaldson1}- and, more geometrically, to the fact that $g_{(\beta)}$ has non-trivial holonomy along simple loops that go around $\{ z_1 = 0\}$. The tangent cone of $g$ at points of $D$ is $\mathbb{C}_{\beta} \times \mathbb{C}^{n-1}$ and its K\"ahler form defines a co-homology class in $X$. 

There are two types of coordinates we can consider around $D$: The first  is given by holomorphic coordinates $z_1, \ldots, z_n$ in which $D= \lbrace z_1 =0 \rbrace$ as before, in the second one we replace the coordinate $z_1$ with $\rho e^{i\theta}$, with $ \rho = |z_1|^{\beta}$ and $e^{i\theta} = \arg (z_1) $, and leave $z_2, \ldots, z_n$ unchanged; we refer to the later as cone coordinates. In other words,  there are two relevant differential structures on $X$ in our situation: One is given by the complex manifold structure we started with, the other  is given by declaring the cone coordinates to be smooth. The two structures are clearly equivalent, by a map modeled on 
\begin{equation*}
(\rho e^{i\theta}, z_2, \ldots, z_n) \to (\rho^{1/\beta} e^{i\theta}, z_2, \ldots, z_n)
\end{equation*}
in a neighborhood of $D$. Note that the notion of a function being H\"older continuous (without specifying the exponent) is independent of the coordinates that we use.

It is easy to come up with examples of metrics which satisfy Definition \ref{DEFINITION}. Indeed,  let $F$ be a smooth positive function and let $\eta$ be a smooth K\"ahler form, both defined on a domain in $\mathbb{C}^n$ which contains the origin. Consider the $(1,1)$ form
\begin{equation} \label{LOC MET}
	\omega = \eta +   i \partial \overline{\partial} (F |z_1|^{2\beta}) .
\end{equation}
Straightforward calculation shows that, in a small neighborhood of $0$, $g$ defines a K\"ahler metric with cone angle $2\pi\beta$ along $D= \{ z_1 =0 \}$. More globally; if $\eta$ is a K\"ahler form on a compact complex manifold $X$, $D \subset X$ is a smooth divisor with a defining section $s \in H^0([D])$, $\epsilon > 0$ is sufficiently small and $h$ is a Hermitian metric on $[D]$. Then 

\begin{equation*}
\omega = \eta +   i \epsilon \partial \overline{\partial} |s|_h^{2\beta} 
\end{equation*}
defines a K\"ahler metric on $X$ with cone angle $2\pi\beta$ along $D$ in the same co-homology class as $\eta$.

We are mainly interested in K\"ahler-Einstein metrics with cone angle $2\pi\beta$ along $D$ (KEcs). These are metrics with cone singularities, as in Definition \ref{DEFINITION},  such that the Ricci tensor is a constant multiple of the metric,
\begin{equation}
	\mbox{Ric}(g_{KE}) = \lambda g_{KE} ,
\end{equation}
in the complement of $D$. From now on we assume that $X$ is compact; among the many results in this area we want to recall the following ones:

\begin{itemize}
	\item \emph{Existence Theory} (\cite{brendle}, \cite{JMR}, \cite{guenancia}).
	The existence results for KEcs parallel the well-known theorems regarding the Calabi conjecture in the case of smooth metrics: 
	$i)$ If $c_1 (X) - (1-\beta)c_1([D]) <0$, then there exists a unique KEcs with $\lambda=-1$. $ii)$ If $c_1 (X) - (1-\beta)c_1([D]) =0$; then, in any K\"ahler class on $X$, there exists a unique KEcs with $\lambda =0$. $iii)$ If  $c_1 (X) - (1-\beta)c_1([D]) >0$ and the \emph{twisted K-energy} is proper, then there exists a unique (up to biholomorphisms which preserve $D$) KEcs  with $\lambda=1$.
	
	\item \emph{Regularity Theory} (\cite{JMR}, \cite{YZ}, \cite{chenwangregularity}).
	The result we want to refer to says that KEcs are `polyhomogeneous'. Let $p \in D$ and $(z_1, \ldots, z_n)$  holomorphic coordinates centered at $p$ in which $D = \{ z_1 =0 \}$. We write $z_1= \rho^{1/\beta} e^{i\theta}$ and denote by $y = (z_2, \ldots, z_n)$ the other coordinate functions.
	Let $g_{KE}$ be a K\"ahler-Einstein metric on $X$ with cone angle $2\pi\beta$ along $D$ and $\beta \in (1/2, 1)$, write $\omega_{KE}$ for the associated K\"ahler form. The regularity theorem says that for every $p \in D$ we can find holomorphic coordinates $(z_1, \ldots, z_n)$ as above such that $\omega_{KE} = i \partial \overline{\partial} \phi$, with
	\begin{equation} \label{ASYM EXP}
		\phi = a_{0}(y) + (a_{01}(y) \cos (\theta) + a_{10} (y) \sin(\theta) ) \rho^{1/\beta} + a_{2}(y) \rho^2 + O(\rho^{2+\epsilon}) . 
	\end{equation}
	Where $a_0, a_{01}, a_{10}, a_2$ are  smooth functions of $y$ and $\epsilon = \epsilon(\beta) >0$.
	When $ \beta \in (0, 1/2]$ the same statement holds if we replace $1/\beta$ with $2$ in the expansion \ref{ASYM EXP}. 
	
	In a different direction, there are results -see \cite{chenwangregularity}- which guarantee that \emph{weak} KEcs are indeed metrics with cone singularities in a H\"older sense, as in Definition \ref{DEFINITION}.

	\item \emph{Chern-Weil formulae} (\cite{SW}, \cite{AL}, \cite{LS}). As shown in the paper of Song-Wang \cite{SW},  the polyhomogeneous expansion implies that the norm of the Riemann curvature tensor of a KE metric with cone angle $2\pi\beta$ is bounded by $\rho^{1/ \beta -2}$. 
	The energy of such a metric $g$ is defined to be
	$$ E(g) = \frac{1}{8\pi^2} \int_X | \mbox{Rm} (g)|^2 =  \frac{1}{8\pi^2}\lim_{\epsilon \to 0} \int_{X\setminus U_{\epsilon}} | \mbox{Rm} (g)|^2  , $$
	where $U_{\epsilon}$ is a tubular neighborhood of $D$ of radius $\epsilon$, $\mbox{Rm}(g)$ denotes the Riemann curvature tensor of $g$ and we integrate using the volume form defined by $g$. It follows that $E(g)$ is finite by comparison with the integral $\int_0^1 \rho^{2/\beta -3} d\rho < \infty$. There is a topological formula for the energy which can be compared with the Chern-Weil formulae in \cite{KrM} for connections with cone singularities.
	This formula expresses the energy of a KE metric of cone angle $2\pi\beta$ along $D$ in terms of $c_1(X), c_2(X), \beta, c_1([D])$ and the cohomology class of the K\"ahler form. When the complex dimension of $X$ is equal to two, the formula reduces to
	\begin{equation}
		E(g_{KE}) = \chi (X) + (\beta -1) \chi (D) .
	\end{equation}

	\item \emph{Compactness Theorem} (\cite{CDS}). 
	Let $X_i$ be a sequence of smooth Fano manifolds with a fixed Hilbert polynomial and  $D_i \subset X_i$  smooth divisors with $ D_i \in |\lambda K_{X_i}^{-1}|$ for some fixed rational number $\lambda \geq 1$ . Fix $ 1-\lambda^{-1} < \beta < 1$. Assume that there exist KE metrics $g_i$ on $X_i$ with cone angle $2\pi\beta$ along $D_i$, we normalize so that $\mbox{Ric}(g_i) = \mu g_i$, with $\mu = 1-(1-\beta)\lambda > 0$. (This normalization condition on the metrics $g_i$ allow us to think of their respective K\"ahler forms as the curvatures of  correponding (singular) Hermitian metrics on $K_X^{-1}$). Approximating the metrics $g_i$ by smooth metrics with a uniform lower bound on the Ricci curvature and a uniform upper bound on the diameter (see \cite{CDS1}) and appealing to the standard Gromov's compactness theorem; shows that  there is, taking a subsequence if necessary, a Gromov-Hausdorff limit of the sequence $g_i$. The main result is then:
	
	\begin{theorem} (Chen-Donaldson-Sun \cite{CDS})  
		There is a $\mathbb{Q}$-Fano variety $W$ and a Weil divisor $\Delta \subset W$ such that:
		\begin{itemize}
			\item The pair $(W, (1-\beta) \Delta)$ is KLT (Kawamata log terminal).
			\item There is a weak conical KE metric for the triple $(W, \Delta, \beta)$ which induces a distance $d$ on $W$; and such that $(W, d)$ is isometric to the Gromov-Hausdorff limit of $(X_i, g_i)$.
			\item There is $m \in \mathbb{N}$ with the property that, up to a subsequence,  we have embeddings $T_i : X_i \to \mathbb{CP}^N$  and $T: W \to \mathbb{CP}^N$ defined by the complete linear systems $H^0(-mK_{X_i})$ and $H^0(-mK_W)$ such that  $T_i (X_i)$ converges to $T(W)$ as algebraic varieties  and $T_i (D_i) \to T(\Delta)$ as algebraic cycles.  
		\end{itemize}
	\end{theorem}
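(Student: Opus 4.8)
The plan is to equip the Gromov--Hausdorff limit, whose existence is guaranteed by the setup above, with a compatible algebraic structure and to identify the limiting divisor together with its log-terminal nature. Write $(Z,d)$ for the pointed limit of a convergent subsequence of $(X_i, g_i)$. Because $\operatorname{Ric}(g_i) = \mu g_i$ with $\mu > 0$ uniformly and the metrics are non-collapsed, Cheeger--Colding theory (adapted to metrics with cone singularities along the $D_i$) applies: every tangent cone of $Z$ is a metric cone, volumes converge, and $Z$ decomposes as $Z = \mathcal{R} \sqcup \mathcal{S}$ into a regular part $\mathcal{R}$, an open dense K\"ahler manifold carrying a smooth KE metric away from the limit divisor, and a singular part $\mathcal{S}$ of small Hausdorff measure. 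The new subtlety over the smooth case is that the conical directions survive in the limit, so the stratification must separate the genuinely singular stratum from the limit of the cone locus; the model tangent cones along the latter are precisely the flat products $\mathbb{C}_\beta \times \mathbb{C}^{n-1}$ of Definition \ref{DEFINITION}.

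The technical heart is the \emph{partial $C^0$-estimate}: a lower bound, uniform in $i$ and in the point, on the Bergman density of the pluri-anticanonical system $|-mK_{X_i}|$ for some fixed large $m$. I would establish this by the H\"ormander $L^2$ method. One constructs an almost-holomorphic section peaking at a prescribed point, using Cheeger--Colding cutoff functions supported near it and modeling the local section on the Bergman kernel of the relevant tangent cone, then corrects to a genuine holomorphic section by solving $\overline{\partial}$ with a weighted estimate, the curvature positivity coming from $\mu > 0$. The weight must be adapted to the conical metric so that the $\overline{\partial}$-correction stays small and does not destroy the peak. \textbf{This estimate is the main obstacle}: it requires the full Cheeger--Colding structure theory in the conical category together with delicate control of holomorphic sections under Gromov--Hausdorff approximation, and it is precisely where the hypothesis $\beta > 1 - \lambda^{-1}$ (equivalently $\mu > 0$) enters.

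Granting the partial $C^0$-estimate, the sections of $|-mK_{X_i}|$ define embeddings $T_i : X_i \to \mathbb{CP}^N$ whose images have uniformly bounded degree and dimension inside a fixed $\mathbb{CP}^N$. By compactness of the Chow variety (or Hilbert scheme), a further subsequence of the cycles $T_i(X_i)$ converges to a limit projective variety, and likewise $T_i(D_i)$ converges as algebraic cycles to some $T(\Delta)$. Comparing the Bergman metrics with $g_i$ via the density lower bound shows these embeddings are uniformly controlled on $\mathcal{R}$, so the algebraic limit is homeomorphic to the metric limit $Z$ compatibly with both structures; let $W$ be its normalization and $\Delta$ the Weil divisor given by the cycle limit. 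This yields the third bullet and identifies $(W,d)$ with the Gromov--Hausdorff limit.

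It remains to verify that $W$ is $\mathbb{Q}$-Fano, that $(W,(1-\beta)\Delta)$ is KLT, and that $d$ is induced by a weak conical KE metric. Normality of $W$ follows from the partial $C^0$-estimate together with the codimension bound on $\mathcal{S}$; since the embedding is by the pluri-anticanonical system, $-K_W$ is $\mathbb{Q}$-Cartier and ample, so $W$ is $\mathbb{Q}$-Fano. The KLT property of $(W,(1-\beta)\Delta)$ is equivalent to local integrability near $\mathcal{S}$ of the density of the limiting volume form, which holds because the limit has finite total volume (volume convergence) and its potential lies in the relevant finite-energy class. Finally, passing to the limit in the conical KE equation --- via convergence of the normalized K\"ahler potentials and stability of the complex Monge--Amp\`ere equation under this convergence (Kolodziej-type estimates and pluripotential continuity) --- shows that $d$ is induced by a weak conical KE metric for $(W,\Delta,\beta)$, giving the second bullet.
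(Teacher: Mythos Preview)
The paper does not prove this theorem at all: it is quoted in Section~\ref{back} as background material, with attribution to Chen--Donaldson--Sun \cite{CDS}, and is followed only by informal remarks explaining the terminology (KLT pairs, weak conical KE metrics) in the two-dimensional case relevant to the rest of the article. There is therefore no proof in the paper to compare your proposal against.

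That said, your outline is a faithful high-level summary of the strategy actually used in \cite{CDS}: Cheeger--Colding structure theory adapted to cone singularities, the partial $C^0$-estimate via H\"ormander's method with peak sections modeled on tangent cones, Chow/Hilbert-scheme compactness to obtain the algebraic limit, and pluripotential arguments for the KLT and weak-KE conclusions. You have correctly identified the partial $C^0$-estimate as the crux and the role of the positivity $\mu>0$. As a sketch this is accurate; as a proof it is of course only an outline --- each of the steps you name (the conical Cheeger--Colding theory, the construction of cutoff and peak sections near cone points, normality of the limit, passage to the Monge--Amp\`ere limit) is a substantial piece of work in \cite{CDS} and its companion papers, and your proposal does not supply the arguments, only their shape.
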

	We won't spell the algebraic geometry words necessary to explain what a KLT pair is, we limit ourselves to say a couple of things in the case of two complex dimensions: 
	
	\begin{enumerate}
		\item The surface $W$ has only finitely many singularities of orbifold type; $\Delta$ is union of irreducible curves counted with multiplicity.
		
		\item Let $p$ be a point in the smooth locus of $W$ which is a singular point  in a component of multiplicity $1$ of the curve $\Delta$, in coordinates centered at $p$ write $\Delta = \{ f=0\}$ for a defining function $f$ with an isolated singularity at $0$; then  $|f|^{2\beta-2}$ is locally integrable. 
	\end{enumerate}

	Similarly, we don't need to write in detail the definition of a weak conical K\"ahler-Einstein metric -see \cite{EGZ}- but we just say that in the complement of $\Delta$ it is a smooth orbifold K\"ahler-Einstein metric.  At points which belong to the smooth locus of multiplicity $1$ components of $\Delta$ the metric cone singularities, in the sense of Definition \ref{DEFINITION}, of cone angle $2\pi\beta$ (Theorem 2 in \cite{CDS}). On the other hand, at smooth points of $\Delta$ of multiplicity $k$ the metric has cone angle 
	\begin{equation} \label{gamma}
	\gamma = k \beta +1-k ,
	\end{equation}
	in the sense that the tangent cone at the point is $\mathbb{C_{\gamma}} \times \mathbb{C}$ (Proposition 13 in \cite{CDS}). It is perhaps better to write \ref{gamma}
	in the form $1-\gamma = k (1- \beta)$; the situation is modeled, in a transverse direction to $\Delta$, by $k$ cone singularities colliding -see Figure \ref{Pacman} for the case $k=2$-.
	
	The goal of this article is to construct and classify the possible tangent cones at singular points of $\Delta$. We concentrate at smooth points of $W$, the general case follows by taking finite coverings.
	For our purposes we can restrict Propositions \ref{Hfm}, \ref{Sfm} and  \ref{Clas} to the situation when $\beta_i= k_i \beta +1-k_i$ for some integers $k_i$; but it is unnatural to add this hypothesis to the statements of our results.
	
\end{itemize}

\subsection{K\"ahler cone metrics} \label{K cones}

A basic reference for this topic is Sparks' survey \cite{sparks}. Let $(S, g_S)$ be a compact Riemannian manifold of real dimension $2n -1$. A Riemannian cone with link $(S, g_S)$ consists of the space $ C = (0, \infty) \times S$  endowed with the metric $g_C = dr^2 + r^2 g_S$, $r$ is the coordinate in the $(0, \infty)$ factor and is then characterized as measuring the intrinsic distance to the apex of the cone; more generally, there is the notion of a metric cone -see \cite{BBI}-. We are particularly interested when $\mbox{Ric} (g_C) \equiv 0$, which is equivalent to $\mbox{Ric}(g_S)= 2(n-1) g_S$. These Ricci-flat cones arise naturally -by means of Bishop-Gromov volume monotonicity theorem- as tangent cones at isolated singularities of limit spaces of non-collapsed sequences of Riemannian manifolds with a lower bound on the Ricci curvature -see \cite{Cheeger}-.

A K\"ahler cone is a Riemannian cone for which there is a parallel complex structure $I$, which makes $C$ into an $n$-dimensional complex manifold. The function $r^2$ is a K\"ahler potential for $g_C$, in the sense that its K\"ahler form is
\[ \omega_C = \frac{i}{2} \partial \overline{\partial} r^2. \]
The Reeb vector field is defined as
\begin{equation*}
\xi = I \left( r \frac{\partial}{\partial r} \right) 
\end{equation*}  
and its flow acts on the cone by holomorphic isometries. 

We restrict our attention to Ricci-flat K\"ahler cones (RFKC), that is K\"ahler cones with $ \mbox{Ric}(g_C)=0$. There is a division RFKC into three types:
\begin{enumerate}
	\item \emph{Regular.} The flow of $\xi$ generates a free $S^1$-action for which the function $r^2$ is a moment map. The K\"ahler quotient of $(C, g_C)$ by this $S^1$-action is an $(n-1)$-dimensional KE Fano manifold, this process can be reverted by means of the so-called Calabi ansatz.
	\item \emph{Quasi-regular.} The flow of $\xi$  generates a locally-free -but not free- $S^1$-action. Same as above, the K\"ahler quotient is a KE Fano orbifold.
	\item \emph{Irregular.} There is at least one non-closed orbit. The closure of the one parameter group generated by $\xi$ is a $k$-dimensional torus, with $k \geq 2$, which acts on the cone by holomorphic isometries.
\end{enumerate}

Let \( (Z, g_{KE}) \) be a normal complex variety with a \emph{weak} KE metric and \(p \in Z \) an isolated singular point. Under suitable circumstances -for example when \( (Z, g_{KE}) \) is the Gromov-Hausdorff limit of a non-collapsed sequence of KE metrics on smooth projective varieties, see \cite{DSII}- there is a unique tangent cone of \(g_{KE}\) at \(p\), this is a Ricci-flat K\"ahler \emph{metric} cone \( (C, g_C) \). The space $C$ is an affine algebraic variety, it is the Spec  of the ring of holomorphic functions on \( C \) of polynomial growth with respect to \(g_C\). Alternatively, $C$ can also be described in terms of a filtration on \( (Z, \mathcal{O}_p) \) -the local ring of regular functions of $Z$ at \(p\)- induced by \(g_{KE}\). It asked in \cite{DSII} whether it is possible to determine \( (C, g_C) \) only in terms of \( (Z, \mathcal{O}_p) \), and to relate this to a \emph{stability} condition for the singularity. There has been recent progress in the case when a neighborhood of \(p\) is biholomorphic to a neighborhood of the apex of a regular RFKC, see \cite{heinsun}.

\subsection {Ricci-flat K\"ahler cone metrics with cone singularities (RFKCcs)} \label{RFKCcs} 

The notion of a flat K\"ahler  metric with cone singularities is defined by means of the local model \( g_{(\beta)} \)

\begin{definition} \label{fmd}
	Let $D \subset X $ be a smooth divisor in a complex manifold $X$. We say that $g$ is a flat K\"ahler metric on $X$ with cone angle $2 \pi \beta$ along $D$; if for every point in the complement of \(D \) we can find holomorphic complex coordinates in which $g$ agrees with the stantard euclidean metric and for every $p \in D$ there are holomorphic coordinates $(z_1, \ldots,  z_n)$ centered at $p$ in which $D = \{ z_1 =0 \}$ and $g$ agrees with $g_{(\beta)}$.
\end{definition}

\begin{example}
	If $\Phi$ is any biholomorphism of $\mathbb{C}^2$ then $\Phi^{*}g_{(\beta)}$ is clearly a flat K\"ahler metric with cone angle $2 \pi \beta$ along $\Phi^{-1}( \{0\} \times \mathbb{C})$. So, if $\Phi (z, w) = ( z- w^2, w)$ then $\Phi^{*} g_{(\beta)}$ has cone angle $2\pi \beta$ along the parabola $z=w^2$.
\end{example}

It should be possible to show, using the polyhomogeneus expansion mentioned in Subsection \ref{KE metrics}, that if \( g \) is a K\"ahler metric on \( X \) with cone angle \( 2 \pi \beta \) along \(D\) -as in Definition \ref{DEFINITION}- which is flat in the complement of \(D\); then it is a flat K\"ahler metric according to Definition \ref{fmd}. Nevertheless, we don't need to use this result.

It is straightforward to combine the notions of Ricci-flat K\"ahler cone (RFKC) and K\"ahler metric with cone singularities to get the following 

\begin{definition}
	Let $D \subset X $ be a smooth divisor in a complex manifold $X$. We say that $g$ is a RFKCcs on \(X\) if it is a RFKC on the complement of \(D\) and it has cone singularities, as in Definition \ref{DEFINITION}, along \(D\).
\end{definition}

When the complex dimension is \(2\), a RFKCcs \(g\) is necessarily flat in the complement of \(D\) and it induces a metric of positive constant Gaussian curvature on its \emph{transversely K\"ahler foliation} -see \cite{sparks}-; Remark \ref{regularity sph met} implies that this is a spherical metric with cone singularities and therefore \(g\) is flat as in Definition \ref{fmd}.

We are mainly interested in the case when \(X= \mathbb{C}^2 \setminus \{0\} \) and \(D\) is a bunch of complex lines which go through the origin and curves of the form \(\{ z^m= a w^n \} \); we allow diferent cone angles at the different components of \(D\). The apex of the cone is at \(0\) and we say that \(g\) is a RFKCcs on \( \mathbb{C}^2 \) -rather than on \(\mathbb{C}^2 \setminus \{0\} \)-.

\begin{example}
	The product $\mathbb{C}_{\beta_1} \times \mathbb{C}_{\beta_2}$  provides an example of a RFKCcs $\mathbb{C}^2$ with cone angle $2\pi \beta_1$ along $\{z_1=0\}$ and $2\pi \beta_2$ along $\{z_2=0\}$. This includes $g_{(\beta)}$ as a particular case when $\beta_2=1$.
\end{example}

According to Panov \cite{panov}, a polyhedral K\"ahler (PK) manifold is a polyhedral manifold whose holonomy is conjugate to a subgroup of \(U(n) \) and every co-dimension \(2\) face with cone angle \(2 \pi k \), \( k \geq 2 \), has a holomorphic direction. If the cone angle at every co-dimension \(2\) face is less than \(2\pi\), then the PK metric is said to be \emph{non-negatively curved}; we restrict to this case. When the PK manifold is also a metric cone, then it is called a PK cone. It is shown in \cite{panov} that the complex structure defined on the complement of the co-dimension \(2\) faces of a PK cone extends; and it defines a RFKCcs on \( \mathbb{C}^2 \). Conversely, any RFKCcs whose link is diffeomorphic to the $3$-sphere is a PK cone.

\section{Spherical  metrics with cone singularities  on the 3-sphere} \label{SECTS}

Let us first describe a local model for a spherical metric in three real dimensions with cone singularities along a codimension two submanifold.
Write $\mathbb{R}^4 = \mathbb{R}^2 \times \mathbb{R}^2$ and take polar coordinates $ (r_1, \theta_1), (r_2, \theta_2)$ on each  factor. Consider the product of a standard cone of total angle $2\pi\beta$ with an Euclidean plane 

\begin{equation}
	g_{(\beta)} = dr_1^2 + \beta^2 r_1^2 d\theta_1^2 +  dr_2^2 +  r_2^2 d\theta_2^2 .
\end{equation}
We want to write $g_{(\beta)}$ as a Riemannian cone; 
it is a general fact that the product of two metric cones is a metric cone. In our case this amounts to check that, if we  define $ r \in (0, \infty) $ and $ \rho  \in (0, \pi/2) $ by 

$$ r_1 = r \sin \rho, \hspace{5mm} r_2 = r \cos \rho ; $$
then  $g_{(\beta)} = dr^2 + r^2  \overline{g}_{(\beta)}$, where

\begin{equation} \label{singsphere}
	\overline{g}_{(\beta)}= d\rho^2 + \beta^2 \sin^2(\rho) d\theta_1^2 + \cos^2(\rho) d\theta_2^2 . 
\end{equation}
We think of $\overline{g}_{(\beta)}$ as a metric on the 3-sphere with a cone singularity of angle $2\pi \beta$ transverse to the circle given by the intersection of $ \lbrace 0 \rbrace \times \mathbb{R}^2$ with $S^3$. It is now straightforward to state the following

\begin{definition} \label{def1}
	Let $S$ be a closed $3$-manifold and let $L \subset S$ be a smooth closed submanifold of codimension two, so that $L= L_1 \cup \ldots \cup L_d$ is a disjoint union of embedded circles $L_j$. Take $\beta_j \in (0,1)$ for $j=1, \ldots, d$. We say that $\overline{g}$ is a spherical metric on $S$ with cone singularities of angle $2\pi \beta_j$ along the $L_j$ if $\overline{g}$ is locally isometric to the round sphere of radius $1$ in the complement of $L$ and around each point of $L_j$ there is a neighborhood in which $\overline{g}$ agrees with $\overline{g}_{(\beta_j)}$
\end{definition}

It shouldn't  be hard to argue that if $S$ admits such a metric, then $S$ must be diffeomorphic to a spherical space form. 

\begin{example} \label{ex2}
	As above, we consider $\mathbb{R}^4 = \mathbb{R}^2 \times  \mathbb{R}^2$ with polar coordinates in each factor. The product of two cones of total angles $2 \pi \beta_1$ and $2 \pi \beta_2$ is given by
	$$ dr_1^2 + \beta_1^2 r_1^2 d\theta_1^2 +  dr_2^2 + \beta_2^2 r_2^2 d\theta_2^2 .$$
	Let $ r \in (0, \infty) $ and $ \rho  \in (0, \pi/2) $ be given by by 
	$ r_1 = r \sin \rho$ and  $r_2 = r \cos \rho;$
	so that the product of the cones writes  $dr^2 + r^2  \overline{g}$, where
	
	\begin{equation}
		\overline{g}= d\rho^2 + \beta_1^2 \sin^2(\rho) d\theta_1^2 + \beta_2^2 \cos^2(\rho) d\theta_2^2 . 
	\end{equation}
	It is easy to check that $\overline{g}$ defines a spherical metric on the 3-sphere with cone singularities of angles $2\pi\beta_1$ and $2\pi\beta_2$ along the Hopf link $L_1 \cup L_2$ given by the intersection of the unit sphere in $\mathbb{R}^4$ with the real planes $\{0\} \times \mathbb{R}^2$ and $\mathbb{R}^2 \times \{0\}$. Nevertheless -unless \( \beta_2 = 1 \)- there is no  neighborhood of $L_1$ isometric to a neighborhood of the singular circle in the model $\overline{g}_{(\beta_1)}$, neither for $L_2$.
\end{example}

\subsection{Hopf bundle}

Let $S^3 = \lbrace |z_1|^2 + |z_2|^2 = 1 \rbrace \subset \mathbb{C}^2$ and consider the Hopf map $H: S^3 \to \mathbb{CP}^1$, $H(z_1, z_2)=[z_1 : z_2]$. This is an $S^1$-bundle with respect to the circle action $e^{it}(z_1, z_2)=(e^{it}z_1, e^{it}z_2)$.
The contraction of the euclidean metric with the derivative of the $S^1$-action gives a $1$-form on $S^3$, referred as the Hopf connection $\alpha_H$. Denote by $g_{FS}$ the Fubini-Study metric on the projective line. By means of stereographic projection $(\mathbb{CP}^1, g_{FS})$ is canonically identified with the round sphere of radius $1/2$ and the Hopf map with 
$$H(z_1, z_2)=(z_1 \overline{z}_2, \frac{|z_2|^2-|z_1|^2}{2}) \in S^2(1/2) \subset \mathbb{R}^3 .$$
It is straightforward to check that the round metric on the 3-sphere is given by
\begin{equation} \label{met3sph}
g_{S^3(1)} = H^{*} g_{FS} + \alpha_H^2 .
\end{equation}
Moreover $d\alpha = H^{*} (\frac{1}{2}K_{FS} dV_{FS})$; where $K_{FS} \equiv 4$ is the Gaussian curvature of the Fubiny-Study metric and $dV_{FS}$ is its area form. 

Let $d\geq2 $,  $L= L_1 \cup \ldots \cup L_d$ be $d$ distinct complex lines going through the origin in $\mathbb{C}^2$ and let $\beta_1, \ldots, \beta_d \in (0,1)$ satisfy the Troyanov condition \ref{Tcondition} ($0<\beta_1=\beta_2<1$ if $d=2$). Denote by $g$ the unique compatible metric on $\mathbb{CP}^1$ of constant Gaussian curvature $4$ and cone angle $2\pi\beta_j$ at the points $L_j$, note that this is $1/4$ times the spherical metric we considered in Subsection \ref{SPH MET}. We shall lift the metric $g$ to a spherical metric on the 3-sphere by means of a suitable connection on the Hopf bundle, in a way analogous to \ref{met3sph}.
We write $K_g$ for the Gaussian curvature of $g$ -which is identically $4$- and $dV_g$ for its area form. The total area of $g$ is $\pi c$ and we write this as a Gauss-Bonnet integral

\begin{equation} \label{GB}
\frac{1}{2\pi} \int_{\mathbb{CP}^1} K_g dV_g =2c . 
\end{equation}

\begin{claim} \label{connection claim}
	There is a connection $\alpha$, unique up to gauge equivalence, such that:
	\begin{enumerate}
		
		\item It has curvature $ d\alpha = (1/2c) H^{*}( K_g dV_g)$.
		
		\item If $p \in \mathbb{CP}^1$ is a point in $L$ and $\gamma_{\epsilon}$ is a loop that shrinks to $p$ as $\epsilon \to 0$, then the holonomy of $\alpha$ along $\gamma_{\epsilon}$ gets trivial as $\epsilon \to 0$.	
	\end{enumerate}		
\end{claim}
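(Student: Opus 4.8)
The plan is to construct $\alpha$ directly on the complement of the fiber circles over $L$ and then check that the holonomy condition forces its extension across those circles. First I would observe that the Hopf bundle $H : S^3 \to \mathbb{CP}^1$ restricted to $\mathbb{CP}^1 \setminus \{L_1, \dots, L_d\}$ is a principal $S^1$-bundle over an open surface, hence topologically trivial; pick a trivialization and write a candidate connection as $\alpha_0 + \beta$, where $\alpha_0$ is any fixed connection (e.g. the Hopf connection $\alpha_H$) and $\beta = H^*\gamma$ for a $1$-form $\gamma$ on the base. The curvature equation $d\alpha = (1/2c) H^*(K_g\,dV_g)$ then becomes the equation $d\gamma = (1/2c)K_g\,dV_g - d\alpha_H/(\text{pullback normalization})$; concretely, since $d\alpha_H = H^*(\tfrac12 K_{FS}\,dV_{FS})$ and $\tfrac12\int K_{FS}\,dV_{FS} = 2\pi$ (Gauss--Bonnet for $S^2$), both $(1/2c)K_g\,dV_g$ and $\tfrac{1}{2}K_{FS}\,dV_{FS}$ are closed $2$-forms on $\mathbb{CP}^1\setminus\{L_j\}$ with the same total integral over $\mathbb{CP}^1$, namely $2\pi$ by \ref{GB}. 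Hence their difference is exact on the punctured sphere (its periods around each puncture are controlled — see below), so $\gamma$ exists; this produces a connection $\alpha$ with the right curvature away from the fibers over $L$.

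Next I would analyze the holonomy near a puncture. Around $L_j$ choose the local coordinate $\eta$ of \ref{spherical2} in which $g = 4\beta_j^2|\eta|^{2\beta_j-2}(1+|\eta|^{2\beta_j})^{-2}|d\eta|^2$ (up to the factor $1/4$), so near $\eta = 0$ the curvature $2$-form $(1/2c)K_g\,dV_g$ is, to leading order, $(2/c)\beta_j^2|\eta|^{2\beta_j-2}\,\tfrac{i}{2}d\eta\wedge d\bar\eta$ times a smooth nonvanishing factor; integrating this over a small disc of radius $\epsilon$ gives a quantity proportional to $\epsilon^{2\beta_j} \to 0$. The Hopf connection $\alpha_H$ contributes a fixed holonomy $e^{2\pi i \cdot (\text{flux})}$ around the same loop; the point is that the flux of $(1/2)K_{FS}\,dV_{FS}$ through a small disc is $O(\epsilon^2) \to 0$ as well, since $\mathbb{CP}^1$ is smooth at $L_j$. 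So the correction form $\gamma$ must be chosen to have the unique period making $\alpha$'s holonomy around $\gamma_\epsilon$ tend to $0$; this pins down the integral of $\gamma$ around each puncture, hence $\gamma$ (and $\alpha$) uniquely up to an exact form, i.e. up to gauge. I would spell out that the residue of $d\gamma$ at each puncture is the difference of the two fluxes above, both of which vanish in the limit, so there is no topological obstruction and the holonomy genuinely kills in the limit.

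Finally, uniqueness up to gauge: if $\alpha, \alpha'$ are two connections satisfying (1) and (2), then $\alpha - \alpha' = H^*\sigma$ for a closed $1$-form $\sigma$ on $\mathbb{CP}^1\setminus\{L_j\}$; condition (2) forces the period of $\sigma$ around each small loop $\gamma_\epsilon$ to be zero (as a limit of integer-valued holonomy data it is locally constant, and it tends to $0$), hence $\sigma$ has trivial periods around all the punctures. Since $\mathbb{CP}^1$ is simply connected, $\sigma$ is then exact on the whole punctured sphere, $\sigma = df$, and $e^{if}$ is a well-defined gauge transformation (using that the periods are in $2\pi\mathbb{Z}$, in fact $0$) relating $\alpha$ and $\alpha'$. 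I expect the main obstacle to be the careful bookkeeping at the punctures: one must verify that the distributional curvature of the candidate connection has no delta-function component along the fibers over $L$ — equivalently that the $2\pi\beta_j$ cone angle of $g$ is exactly compensated so that the holonomy vanishes rather than limiting to some $e^{2\pi i(\text{something})}\neq 1$ — and this is precisely where the normalization by $2c$ in the curvature and the cone-angle data $\beta_j$ must match up. The rest is routine Hodge theory on a punctured Riemann surface.
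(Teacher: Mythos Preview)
Your approach is correct but takes a different route from the paper's. The paper works explicitly: writing $g = e^{2\phi}|d\xi|^2$ in an affine chart with $L_d$ at infinity, it sets $u = \phi - \sum_{j=1}^{d-1}(\beta_j-1)\log|\xi - a_j|$ (the desingularized conformal factor), defines $\alpha_0 = \frac{i}{2c}(\partial u - \overline{\partial}u)$, and puts $\alpha = dt + \alpha_0$ in the trivialization \ref{trivialization hopf 1}. The holonomy condition at each finite puncture $a_j$ follows from the pointwise estimate $|\xi - a_j|\,\partial u/\partial\xi \to 0$ (read off from the local model \ref{spherical2}); at $L_d$ it comes from Stokes together with the Gauss--Bonnet identity $(2\pi)^{-1}\int_{\mathbb{C}} d\alpha_0 = 1$. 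You instead perturb the smooth Hopf connection by $H^*\gamma$ with $d\gamma$ equal to the curvature difference, and then adjust $\gamma$ by a closed $1$-form to kill the limiting periods at all punctures simultaneously. One point you should make explicit: there are $d$ period conditions but only $d-1$ free parameters in $H^1$ of the punctured sphere, so compatibility must be checked; it is precisely the equality of total integrals of $(1/2c)K_g\,dV_g$ and $\tfrac{1}{2}K_{FS}\,dV_{FS}$ over $\mathbb{CP}^1$ (both equal to $2\pi$), which you do observe but do not explicitly connect to this count. Your cohomological argument is arguably cleaner for the Claim in isolation, but the paper's explicit formula for $\alpha_0$ in terms of $u$ is not incidental: it feeds directly into the holomorphic coordinate $w = c^{1/2c}r^{1/c}e^{u/2c}e^{it}$ used immediately afterwards in the proof of Proposition~\ref{Hfm}, so you would eventually need to write down something equivalent. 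A small wobble in your uniqueness paragraph: the period of the closed form $\sigma$ around $\gamma_\epsilon$ is \emph{constant} in $\epsilon$ (the loops are homotopic), so ``it tends to $0$'' is not the right phrasing; the holonomy condition forces this constant into $2\pi\mathbb{Z}$, which is exactly what makes $e^{if}$ a well-defined gauge transformation.
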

We think of $\alpha$ as a $1$-form on $S^3$ singular along $L$. Given a smooth map $f: S^2 \to S^1$, it defines a gauge transformation $\hat{f}: S^3 \to S^3$, $\hat{f}(p)= f(H(p)) \cdot p$; this provides an identification of the group of gauge transformations of the Hopf bundle with the set of maps from the $2$-sphere to $S^1$. Two connections which differ by the pull-back of an exact $1$-form on the base are gauge equivalent. The uniqueness statement in the claim follows from the fact that the first de Rham co-homology group of the punctured $2$-sphere is generated by simple loops which go around the points $L_j$, $j=1, \ldots, d$.  
We prove Claim \ref{connection claim} by writing $\alpha$ explicitly in terms of $g$; before doing this we recall the standard trivializations

\begin{equation} \label{trivialization hopf 1}
S^3 \setminus \{ z_2 \neq 0 \} \cong \mathbb{C} \times S^1, \hspace{2mm} \mbox{given by} \hspace{2mm} (z_1, z_2) \to \left(\xi= \frac{z_1}{z_2}, \hspace{1mm} e^{it}= \arg(z_2)\right) ;
\end{equation}
and 
\begin{equation} \label{trivialization hopf 2}
S^3 \setminus \{ z_1 \neq 0 \} \cong \mathbb{C} \times S^1, \hspace{2mm} \mbox{given by} \hspace{2mm} (z_1, z_2) \to \left(\eta = \frac{z_2}{z_1}, \hspace{1mm} e^{is}= \arg(z_1) \right).  
\end{equation}
These are related via
\begin{equation} \label{change coordinates hopf}
\eta = 1/\xi, \hspace{2mm} e^{is} = \arg(\xi) e^{it}.
\end{equation}
It is easy to write their inverses as
\begin{equation*}
(\xi, e^{it}) \to  \left( z_1= \frac{\xi}{\sqrt{1+|\xi|^2}} e^{it}, \hspace{1mm} z_2= \frac{1}{\sqrt{1+|\xi|^2}} e^{it} \right), \hspace{2mm}
(\eta, e^{is}) \to  \left( z_1= \frac{1}{\sqrt{1+|\eta|^2}} e^{is}, \hspace{1mm} z_2= \frac{\eta}{\sqrt{1+|\eta|^2}} e^{is} \right).
\end{equation*}

We are ready to prove the claim

\begin{proof}
	
	W.l.o.g. we  assume that  $ L_j = \lbrace z_1 = a_j z_2 \rbrace$ with $a_j \in \mathbb{C}$  for $j = 1, \ldots , d-1$ and $ L_d= \lbrace z_2=0 \rbrace$. Set $\xi = z_1/z_2$, then $ g = e^{2\phi} |d\xi|^2$ with $\phi$ a function of $\xi$.
	Set
	
	$$ u = \phi - \sum_{j=1}^{d-1} (\beta_j -1) \log |\xi - a_j| ,$$
	this is a continuous function on $\mathbb{C}$. Moreover
	\begin{equation} \label{derivative of u}
	\lim_{\xi \to a_j}  | \xi - a_j| \frac {\partial u}{\partial \xi}=0
	\end{equation}
	for $j=1, \ldots, d-1$. Indeed, if $\eta$ is a complex coordinate centered at $a_j$ in which
	
	$$ g = \beta^2 \frac{|\eta|^{2\beta -2}}{(1 + |\eta|^{2\beta})^2} |d \eta|^2 .  $$
	Then  $\phi = \log \beta + (\beta -1) \log |\eta| - \log (1 + |\eta|^{2\beta})$ and
	\[  \lim_{\eta \to 0} |\eta|  \frac{\partial}{\partial \eta} \log (1 + |\eta|^{2\beta}) =0 .  \]
	
	On $\mathbb{C} \setminus \lbrace a_1, \ldots , a_{d-1} \rbrace$ define the real 1-form 
	
	\begin{equation}
	\alpha_0 = \frac{i}{2c} (\partial u - \overline{\partial} u) .
	\end{equation}
	It follows from \ref{derivative of u} that,  for $j=1, \ldots, d-1$,

	\begin{equation} \label{gauge condition}
	\lim_{\epsilon \to 0} \int_{C_{\epsilon}(a_j)} \alpha_0 = 0 , 
	\end{equation}
	where $C_{\epsilon}(a_j) = \lbrace |\xi - a_j | = \epsilon \rbrace$.
	On the other hand
	
	\begin{equation} \label{curvature connection}
	d\alpha _0= -\frac{i}{c}\partial \overline{\partial}u = \frac {1}{2c} K_g dV_g , 
	\end{equation}
	so  \ref{GB} gives us that
	
	\begin{equation} \label{integral}
	\frac{1}{2\pi} \int_{\mathbb{C}} d\alpha_0 =1 .
	\end{equation}
	On the trivial $S^1$-bundle $\mathbb{C} \setminus \lbrace a_1, \ldots , a_{d-1} \rbrace \times S^1$ with coordinates $(\xi, e^{it})$ consider the connection $\alpha = dt + \alpha_0$. By means of the trivialization map \ref{trivialization hopf 1} we think of $\alpha$ as a connection on the Hopf bundle. It follows from \ref{gauge condition} and \ref{curvature connection}, that we only need to verify the holonomy condition -second item of the claim- at $\xi = \infty$, which corresponds to the point $L_d$. We use the coordinates \ref{trivialization hopf 2}, where $\eta= 1/\xi$. The change of coordinates \ref{change coordinates hopf} implies that $ \alpha = dt + \alpha_0 = ds + \beta_0 $ with $ \beta_0 =  d(\arg \eta) + \alpha_0$. Now $ \lim_{\epsilon \to 0} \int_{|\eta| = \epsilon} \alpha_0 = - \lim_{N \to \infty} \int_{|\xi| = N} \alpha_0$. It follows from \ref{gauge condition}, \ref{integral} and Stokes' theorem that  $\lim_{N \to \infty} \int_{|\xi| = N} \alpha_0 = 2 \pi$. As a result $ \lim_{\epsilon \to 0} \int_{|\eta |=\epsilon} \beta_0 = 0$.
	
\end{proof}

We proceed with the construction of the spherical metric on the $3$-sphere with cone singularities at the Hopf circles $L$.
There is, at least locally, a description for the model metric $\overline{g}_{(\beta)}$ analogous to \ref{met3sph}. Take polar coordinates $(\rho, \theta)$ on a disc $D$ centered at the origin in $\mathbb{R}^2$ and consider the metric on $D \times S^1$ given by

\begin{equation} \label{loc2}
	d\rho^2 + \beta^2 \frac{\sin^2(2\rho)}{4} d\theta^2 + (d{t} + \beta \sin^2(\rho) d\theta )^2 .
\end{equation}
We claim that \ref{loc2}  is locally isometric to the model $\overline{g}_{(\beta)}$ at the points $\{0\} \times S^1$. Indeed if we set $ t=\theta_2 $ and $\theta = \theta_1 - \theta_2$; then \ref{loc2} writes as  $d\rho^2 + \beta^2 \sin^2(\rho) d\theta_1^2 + \beta^2 \cos^2(\rho) d\theta_2^2$ and this agrees with the metric given in Example \ref{ex2} with $\beta=\beta_1=\beta_2$. We let $\alpha= d{t} + \beta \sin^2(\rho) d\theta$ and think of it as a connection on the trivial bundle $D \times S^1$; it is then easy to check that $d \alpha = (1/2) K_g dV_g$ where $g= d\rho^2 + \beta^2 \frac{\sin^2(2\rho)}{4} d\theta^2$.

\begin{lemma} \label{3sphere}
	There is a -unique up to a bundle isometry- spherical metric $\overline{g}$ on $S^3$ with cone angle $2\pi \beta_j$ along $L_j$ for $j=1, \ldots, d$ such that
	
	\begin{itemize} 
		\item $\overline{g}$ is invariant under the $S^1$ action $e^{it}(z_1, z_2)= (e^{it}z_1, e^{it}z_2)$.
		\item $H : (S^3 \setminus L, \overline{g}) \to (\mathbb{CP}^1 \setminus L, g)$ is a Riemannian submersion with geodesic fibers of constant length.
	\end{itemize}	
\end{lemma}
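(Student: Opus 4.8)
The plan is to construct $\overline{g}$ by the same recipe that produces the round metric $g_{S^3(1)}=H^{*}g_{FS}+\alpha_H^2$ in \ref{met3sph}, using the connection of Claim \ref{connection claim} in place of the Hopf connection, then to read off the three listed properties, and finally to extract uniqueness from the uniqueness part of Claim \ref{connection claim}.

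\emph{Construction.} Let $\alpha$ be the connection on the Hopf bundle given by Claim \ref{connection claim}, so $d\alpha=(1/2c)\,H^{*}(K_g\,dV_g)$ and the holonomy of $\alpha$ at each $L_j$ is trivial in the limit. Define
\[
\overline{g}=H^{*}g+c^{2}\,\alpha^{2}.
\]
Let $V$ be the infinitesimal generator of the action $e^{it}(z_1,z_2)=(e^{it}z_1,e^{it}z_2)$, normalised so that $\alpha(V)=1$. Then $\overline{g}$ is $S^1$-invariant, since $H$ is $S^1$-invariant and $\alpha$ is a connection form for the action; $H$ is a Riemannian submersion onto $(\mathbb{CP}^1\setminus L,g)$, because $c^{2}\alpha^{2}$ vanishes on the horizontal distribution $\ker\alpha$ while $H^{*}g$ restricts there to $g$; the fibres of $H$ are exactly the $S^1$-orbits, on which $\overline{g}$ restricts to $c^{2}\,dt^{2}$, hence they are circles of constant length $2\pi c$; and $V/c$ being a Killing field of constant length one satisfies $\nabla_{V/c}(V/c)=0$, so these circles are geodesics. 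The factor $c^{2}$, equivalently the fibre length $2\pi c$, is forced: $g$ has area $\pi c$ and $\alpha$ lives on the degree-one Hopf bundle, so $d(c\alpha)=2\,dV_g$, which is precisely the curvature that the unit-normalised connection of a Riemannian submersion over a surface of curvature $4$ must have for the O'Neill formulas to output constant curvature one.

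\emph{Spherical with the correct cone angles.} In the standard trivialisation write $\alpha=dt+\alpha_0$ and set $\tau=ct$, so $\overline{g}=H^{*}g+(d\tau+c\alpha_0)^{2}$ with $d(c\alpha_0)=2\,dV_g$. Away from $L$: over a simply connected disc $U\subset\mathbb{CP}^1\setminus L$ the surface $(U,g)$, having curvature $4$, is isometric to an open subset of $S^2(1/2)$; comparing with the local expression $H^{*}g_{FS}+(dt'+a_0)^{2}$ of $g_{S^3(1)}$ over that subset, where $da_0=2\,dV_g$, we get $c\alpha_0-a_0=dh$, and the fibre shift $\tau\mapsto\tau+h$ exhibits $\overline{g}$ as locally isometric to the round $S^3(1)$, in particular of constant curvature one. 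Near $L_j$: by the regularity of spherical metrics with cone singularities (Remark \ref{regularity sph met}), in geodesic polar coordinates $(\rho,\theta)$ about the point $L_j\in\mathbb{CP}^1$ one has $g=d\rho^{2}+\tfrac14\beta_j^{2}\sin^{2}(2\rho)\,d\theta^{2}$; on the punctured disc $c\alpha_0$ and $\beta_j\sin^{2}\rho\,d\theta$ have the same differential $2\,dV_g$, and their difference, being closed with period $0$ around $L_j$ (the period is constant in the loop and its limit vanishes by the holonomy condition of Claim \ref{connection claim}), equals $dh$; the fibre shift by $h$ then identifies $\overline{g}$ near $L_j$ with the metric \ref{loc2} for $\beta=\beta_j$, which the text has shown to be locally isometric to $\overline{g}_{(\beta_j)}$. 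Hence $\overline{g}$ is a spherical metric with cone angle $2\pi\beta_j$ along $L_j$, so all three conditions hold.

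\emph{Uniqueness.} Let $\overline{g}'$ be any metric as in the statement. Since $H$ is a Riemannian submersion, $\overline{g}'=H^{*}g+\theta^{2}$ with $\theta$ the $\overline{g}'$-dual of the unit vertical field; $S^1$-invariance makes the vertical length descend to a function on $\mathbb{CP}^1\setminus L$, and the fibres being geodesic forces it, via $\nabla_VV=0$ and the Killing equation for $V$, to be a constant $c'$, so $\theta=c'\,dt+H^{*}\beta_0$. By the O'Neill formulas for a Riemannian submersion with totally geodesic one-dimensional fibres over a surface of curvature $4$, constant sectional curvature one is equivalent to $d\theta=\pm2\,dV_g$; integrating over $\mathbb{CP}^1$ and using that $\tfrac1{c'}\theta$ is a connection on the Hopf bundle, of first Chern number one, forces the plus sign and $c'=c$. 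Near each $L_j$ the $S^1$-invariance together with the local model force $\overline{g}'$ to coincide with \ref{loc2} up to gauge, so $\tfrac1c\theta$ has trivial holonomy at $L_j$. Thus $\tfrac1c\theta$ satisfies both requirements of Claim \ref{connection claim} and is gauge-equivalent to $\alpha$, so $\overline{g}'=H^{*}g+c^{2}\alpha^{2}=\overline{g}$ up to a bundle isometry.

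\emph{Main difficulty.} The curvature computation away from $L$ is routine once the comparison with the round $S^3(1)$ is organised. The two delicate points are the normalisation — that the Hopf fibres of $\overline{g}$ must have length $2\pi c$ rather than $2\pi$, dictated by $\mathrm{Area}(g)=\pi c$ and the first Chern number of the Hopf bundle — and the matching along $L$, where one has to combine the regularity of spherical cone metrics (to normalise $g$), the holonomy condition of Claim \ref{connection claim} (to normalise $\alpha$), and the identification of \ref{loc2} with $\overline{g}_{(\beta_j)}$. This same local analysis underlies the uniqueness argument.
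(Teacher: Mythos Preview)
Your proof is correct and follows essentially the same route as the paper: define $\overline{g}=H^{*}g+c^{2}\alpha^{2}$, verify the local form \ref{loc2} by gauge-transforming $\alpha$ and rescaling the fibre coordinate (treating the smooth case as $\beta=1$), and deduce uniqueness by showing any competitor yields a connection satisfying both conditions of Claim \ref{connection claim}. The only cosmetic difference is that you invoke the O'Neill formulas explicitly to pin down $d\theta=2\,dV_g$ and hence $c'=c$, whereas the paper asserts this directly as ``the Riemannian submersion property''; and you derive constant fibre length from the Killing and geodesic conditions, while the paper simply reads it off the hypothesis.
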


\begin{proof}	
	Set
	\begin{equation} \label{metric s3}
		\overline{g} = g + c^2 \alpha^2 . 
	\end{equation}
	
	The $S^1$ invariance and Riemannian submersion properties of $\overline{g}$ are evident from its definition. Let us check that $\overline{g}$ is a spherical metric according to Definition \ref{def1}. Let $p  \in S^3$; we use the coordinates \ref{trivialization hopf 1} and \ref{trivialization hopf 2}, w.l.o.g. we assume that $p$ belongs to the domain of definition of the coordinates \ref{trivialization hopf 1} so that $p = (\xi_0, e^{it_0})$.
	There are polar coordinates $(\rho, \theta)$ around $\xi_0$ in which
	
	$$ g = d\rho^2 +  \beta^2 \frac{\sin^2(2\rho)}{4} d\theta^2 ;$$
	where $\beta=1$ if $p \notin L$ and $\beta= \beta_j$ if $p \in L_j$. Write the connection $\alpha = dt + \alpha_0$;
	in these coordinates $d\alpha_0 = (1/2c) K_g dV_g = (1/c) \beta \sin(2\rho) d\rho d\theta$. It follows from the holonomy condition on $\alpha$ that, up to a gauge transformation, we can assume  $\alpha_0 = (1/c) \beta \sin^2(\rho) d\theta$. Then $ c \alpha = c dt + \beta \sin^2(\rho) d\theta$. Finally we take a point distinct from $p$ and on the same fiber, remove it and scale the circle coordinate to obtain the desired expression. More precisely;  if we assume $t_0  \in (-\pi, \pi)$, say,  and define $\overline{t} = c t$ we have that
	
	$$\overline{g} =  d\rho^2 + \beta^2 \frac{\sin^2(2\rho)}{4} d\theta^2 + (d\overline{t} + \beta \sin^2(\rho) d\theta )^2 .$$
	Which agrees with \ref{loc2}.

	Finally, we prove uniqueness. Let $\overline{g}$ be a metric satisfying the conditions of the Lemma. The lengths $l$ of the Hopf circles is constant, write $ l= 2 \pi \tilde{c}$ for some $ \tilde{c} >0$. We obtain a 1-form $\tilde{\alpha}$ by contracting $\overline{g}$ with the derivative of the circle action, then $\tilde{\alpha}= \tilde{c} \alpha$ with $\alpha$ a connection and $\overline{g}= H^{*} g + \tilde{c}^2 \alpha^2$. The fact that at the singular fibers $\overline{g}$ is locally isometric to the models $\overline{g}_{(\beta_j)}$ implies that $\alpha$ must satisfy the holonomy condition; Stokes' Theorem then implies that $(1/2\pi) \int_{\mathbb{CP}^1} d\alpha=1$. The Riemannian submersion property gives us that $d\alpha = (1/2\tilde{c}) K_g dV_g$, therefore $\tilde{c}=c$. The uniqueness then follows from \ref{connection claim}.
\end{proof}

\begin{remark} \label{volume}
	The proof above gives us that the fibers of $H$ have constant length $2\pi c$. Since $\mbox{Vol}(g)=\pi c$ we have $\mbox{Vol}(\overline{g})= 2\pi^2 c^2$. The volume of the round 3-sphere of radius $1$ is $2\pi^2$, so we get that $\mbox{Vol}(\overline{g})/\mbox{Vol}(S^3(1)) = c^2$. This ratio is a relevant quantity in Riemannian convergence theory: If $p$ is a point in a limit space with a tangent cone with link $\overline{g}$; then this volume ratio measures how singular  the limit space is at $p$. Roughly speaking the smaller the volume ratio the worse the singularity.
\end{remark}

Lemma \ref{3sphere} is also established in \cite{panov}; for the sake of completeness we repeat the arguments in \cite{panov}, these make clear why the fibers of $\overline{g}$ must have length $2 \pi c$: If \( \Omega \subset S^2(1/2) \) is a contractible domain; then the universal cover of $ H^{-1}(\Omega) \subset S^3(1)$ is diffeomorphic to \( \Omega \times \mathbb{R} \), and its inherited constant curvature $1$ metric is invariant under translations on the \( \mathbb{R} \) factor. The planes orthogonal to the fibers define a horizontal distribution, hence a connection $\nabla$ on \( \Omega \times \mathbb{R} \). The holonomy of $\nabla$ along a closed curve \( \gamma \subset \Omega \) is equal to the parallel translation by twice the algebraic area bounded by $\gamma$. On the other hand; for any $l >0$ we can take the quotient of \( \Omega \times \mathbb{R} \) by \( l \mathbb{Z} \) to obtain a metric $\overline{g}$ of contant curvature $1$ on \( \Omega \times S^1 \) such that all the fibers are geodesics of length $l$. 
Given the metric $g$ on $S^2$ with cone singularities and Gaussian curvature $4$; we can cut $S^2$ by geodesic segments with vertices at all the conical points and obtain a contractible polygon $P$ which can be immersed -by its enveloping map- in $S^2(1/2)$. Consider the metric $\overline{g}$ on \( P \times S^1 \) 
with $l = 2 \mbox{Area}(P)$. It follows that the holonomy of the fibration along the border of $P$ is trivial -as it makes one full rotation-; and the gluing of $P$ which gives $g$ can be lifted to a gluing of \( P \times S^1 \) to obtain the metric $\overline{g}$ of Lemma \ref{3sphere}.

\subsection{Seifert bundles and branched coverings} \label{seifertsection}
Let $p$ and $q$ be positive co-prime integers, w.l.o.g. we can assume that $1 \leq p < q$.  Consider the $S^1$-action on $S^3= \{ |z_1|^2 + |z_2|^2=1 \} \subset \mathbb{C}^2 $ given by 
\begin{equation} \label{mn}
	e^{it}(z_1, z_2)= (e^{ipt}z_1, e^{iqt}z_2)
\end{equation}
together with the Seifert map $S_{(p,q)}: S^3 \to \mathbb{CP}^1$ given by $S_{(p,q)}(z_1, z_2) = [z_1^q, z_2^p]$. The map $S_{(p, q)}$ is invariant under the $S^1$-action \ref{mn} and restricts to an $S^1$-bundle over $\mathbb{CP}^1 \setminus \{ [1, 0], [0,1] \}$. The fiber of $S_{(p,q)}$ over a point in the projective line distinct from the poles is a torus knot of type $(p, q)$. Around the pole $[1,0]$ there is a disc $U$ and an $S^1$-equivariant diffeomorphism from $S_{(p,q)}^{-1}(U)$ to the solid torus $D \times S^1$ with the $S^1$-action $e^{it} (z, e^{i\theta})= (e^{ipt}z, e^{iqt}e^{i\theta})$, similarly there is a disc around the pole $[0, 1]$ and an $S^1$-equivariant diffeomorphism from the preimage of the disc to the solid torus with $S^1$-action $e^{it} (z, e^{i\theta})= (e^{iqt}z, e^{ipt}e^{i\theta})$. In this section we  lift the spherical metrics on the projective line to the 3-sphere by means of $S_{(p, q)}$, we do this by means of a branched covering map $\Psi_{(p,q)} : S^3 \to S^3$  given by
\begin{equation}
	\Psi_{(p,q)} (z_1, z_2)= \left( \frac{z_1^q}{\sqrt{|z_1|^{2q} + |z_2|^{2p}}},  \frac{z_2^p}{\sqrt{|z_1|^{2q} + |z_2|^{2p}}}  \right) .
\end{equation}
Note that
$S_{(p,q)}= H \circ \Psi_{(p,q)}$.
The map $\Psi_{(p,q)}$ is a branched $pq$-fold cover, branched along the two exceptional fibers of $S_{(p,q)}$. It is equivariant with respect to the circle actions $(e^{ipt}z_1, e^{iqt}z_2)$ and $(e^{ipqt}z_1, e^{ipqt}z_2)$.
\begin{example} \label{seifertexample}
	Let $g$ be the spherical metric on $\mathbb{CP}^1$ with cone angles $2\pi (1/q)$ at $[0,1]$, $2\pi (1/p)$ at $[1,0]$ and $2\pi\beta$ at $[1,1]$.  The triple $(1/q, 1/p, \beta)$ satisfies the Troyanov condition \ref{Tcondition} if and only if $1-1/p-1/q< \beta < 1 -1/p + 1/q$. Write $\overline{g}$ for the lift of $(1/4)g$ to $S^3$ by means of the Hopf map. Set $\tilde{g}= \Psi^{*} \overline{g}$. It is then clear that $\tilde{g}$ is  a spherical metric on $S^3$, invariant under the $S^1$-action \ref{mn} and has a cone singularity of angle $2 \pi \beta$ along the $(p, q)$-torus knot $\{ (e^{ip\theta} / \sqrt{2} , e^{iq\theta} / \sqrt{2}) , \hspace{2mm} \theta \in [0, 2\pi]  \}$.
\end{example}

The following Lemma is an immediate consequence of Lemma \ref{3sphere}, pulling-back the metric $\overline{g}$ with the map $\Psi_{(p,q)}$.

\begin{lemma} \label{slema}
	Set $d\geq 3$. Let $\beta_1, \ldots, \beta_{d-2} \in (0,1)$ and  $\beta_{d-1}, \beta_d \in (0,1]$. Assume that the numbers 
	$$\beta_1, \ldots, \beta_{d-2}, (1/q)\beta_{d-1}, (1/p) \beta_d $$ 
	satisfy the Troyanov condition \ref{Tcondition}. Let $g$ be the spherical metric on $\mathbb{CP}^1$ with cone angles $2 \pi (1/q) \beta_{d-1}$ at $[0,1]$, $2 \pi (1/p) \beta_d$ at $[1,0]$ and $ 2\pi \beta_j$ at $s_j$ for $1 \leq j \leq d-2$. Then there is a spherical metric $\tilde{g}$ on $S^3$ with cone angles $2\pi \beta_{d-1}$ and $2 \pi \beta_d$ at the Hopf circles which fiber over $[0,1]$ and $[1,0]$ respectively; and cone angles $2 \pi \beta_j$ at the $(p, q)$-torus knots which fiber over the points $s_j$, $1 \leq j \leq d-2$. The metric $\tilde{g}$ is invariant under the $S^1$-action \ref{mn} and $ S_{(p, q)} : (S^3, \tilde{g}) \to (\mathbb{CP}^1, (1/4) g)$ is a Riemannian submersion with geodesic fibers of constant length $2 \pi \tilde{c}$ at the non-exceptional orbits. Furthermore, the metric $\tilde{g}$ is unique up to a $S^1$-equivariant isometry which induces the identity on $\mathbb{CP}^1$
\end{lemma}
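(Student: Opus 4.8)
The plan is to deduce Lemma \ref{slema} from Lemma \ref{3sphere} and Example \ref{seifertexample} by pulling back along the branched covering $\Psi_{(p,q)}$. First I would apply Lemma \ref{3sphere} on the base: the numbers $\beta_1,\ldots,\beta_{d-2},(1/q)\beta_{d-1},(1/p)\beta_d$ satisfy the Troyanov condition by hypothesis, so there is a unique compatible metric $g$ of constant Gaussian curvature $4$ on $\mathbb{CP}^1$ with cone angle $2\pi\beta_j$ at $s_j$, $2\pi(1/q)\beta_{d-1}$ at $[0,1]$ and $2\pi(1/p)\beta_d$ at $[1,0]$, and Lemma \ref{3sphere} lifts $(1/4)g$ to a spherical metric $\overline{g}=H^{*}((1/4)g)+c^2\alpha^2$ on $S^3$, with cone angle $2\pi(1/q)\beta_{d-1}$ along the Hopf circle $\{z_1=0\}$, $2\pi(1/p)\beta_d$ along $\{z_2=0\}$ and $2\pi\beta_j$ along the Hopf circle over $s_j$; here $2\pi c$ is the common length of the regular fibers and $\pi c=\mbox{Vol}((1/4)g)$. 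Then I set $\tilde g=\Psi_{(p,q)}^{*}\overline{g}$.

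The second step is to check that $\tilde g$ has the asserted cone angles. Away from the two exceptional fibers, $\Psi_{(p,q)}$ is an unramified $pq$-fold covering, so $\tilde g$ is spherical of constant curvature $1$ there and has cone angle $2\pi\beta_j$ along each component of $\Psi_{(p,q)}^{-1}$ of the Hopf circle over $s_j$; since that preimage is connected (the fiber of $S_{(p,q)}$ over $s_j$ is a single $(p,q)$-torus knot), the cone angle along the torus knot over $s_j$ is exactly $2\pi\beta_j$. Along the two branch loci the map is modeled, in the equivariant charts, by $z\mapsto z^q$ near $[1,0]$ and $z\mapsto z^p$ near $[0,1]$; pulling back a transverse cone of angle $2\pi(1/q)\beta_{d-1}$ by a $q$-fold branched cover multiplies the transverse cone angle by $q$, giving $2\pi\beta_{d-1}$ along the Hopf circle $\{z_1=0\}$ and likewise $2\pi\beta_d$ along $\{z_2=0\}$. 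This is precisely the local computation that already appears in Example \ref{seifertexample} for the model $\overline{g}_{(\beta)}$, and it is the one genuinely new point to verify; I regard this branch-point angle multiplication as the main obstacle, essentially a bookkeeping check that the $pq$-fold branched cover does not introduce unexpected behaviour at the exceptional orbits and that the transverse structure is literally $\mathbb{C}_{\beta_{d-1}}\times\mathbb{R}$ (resp. $\mathbb{C}_{\beta_d}\times\mathbb{R}$).

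The third step is the equivariance, Riemannian submersion and fiber-length statements. Since $\Psi_{(p,q)}$ intertwines the $S^1$-action \ref{mn} with the action $e^{it}(z_1,z_2)=(e^{ipqt}z_1,e^{ipqt}z_2)$ and $\overline{g}$ is invariant under the latter, $\tilde g=\Psi_{(p,q)}^{*}\overline{g}$ is invariant under \ref{mn}. From $S_{(p,q)}=H\circ\Psi_{(p,q)}$ and the fact that $H$ is a Riemannian submersion for $(S^3,\overline{g})\to(\mathbb{CP}^1,(1/4)g)$ with geodesic fibers, it follows that $S_{(p,q)}:(S^3,\tilde g)\to(\mathbb{CP}^1,(1/4)g)$ is a Riemannian submersion with geodesic fibers; a non-exceptional orbit of \ref{mn} maps $pq:1$ onto its image orbit of the Hopf action under $\Psi_{(p,q)}$ restricted to that orbit, but the orbit of \ref{mn} is traversed with period $2\pi$ and covers the Hopf circle... more precisely the non-exceptional fiber of $S_{(p,q)}$ has length equal to that of the corresponding fiber of $H$, namely $2\pi c$, so $\tilde c=c$. (One checks the covering degree along fibers is compatible with the global degree $pq$ of $\Psi_{(p,q)}$.)

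Finally, uniqueness. Given any $\tilde g$ with the stated properties, the common fiber length $2\pi\tilde c$ and the Riemannian submersion property produce a connection $\tilde\alpha=\tilde c\,\tilde\alpha_0$ on the Seifert bundle $S_{(p,q)}$ over $\mathbb{CP}^1\setminus\{[0,1],[1,0]\}$ with $\tilde g=S_{(p,q)}^{*}((1/4)g)+\tilde c^2\tilde\alpha^2$; the local models at the exceptional and conical fibers force the holonomy condition, and Stokes together with the curvature identity $d\tilde\alpha=(1/2\tilde c)K_g\,dV_g$ and $(1/2\pi)\int K_g\,dV_g=2c$ forces $\tilde c=c$. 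Pushing down by $\Psi_{(p,q)}$ (equivalently, passing to the associated connection on the Hopf bundle) reduces the statement to the uniqueness part of Lemma \ref{3sphere}, so $\tilde g$ is determined up to an $S^1$-equivariant isometry covering the identity on $\mathbb{CP}^1$. The case $d=2$, where one requires $\beta_{d-1}/q=\beta_d/p$, is handled by the same argument using Troyanov's rugby-ball metric in place of Theorem \ref{TLT}.
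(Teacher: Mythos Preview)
Your approach is exactly the paper's: apply Lemma \ref{3sphere} to the angles $\beta_1,\ldots,\beta_{d-2},(1/q)\beta_{d-1},(1/p)\beta_d$ to obtain $\overline{g}$, then set $\tilde g=\Psi_{(p,q)}^{*}\overline{g}$ and read off the properties. The cone-angle check at the branch loci and the reduction of uniqueness to Lemma \ref{3sphere} are the right moves.

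There is one concrete error in your third step: the fiber length. The restriction of $\Psi_{(p,q)}$ to a non-exceptional fiber of $S_{(p,q)}$ is a $pq$-fold covering of the corresponding Hopf circle (as you yourself note), and since $\tilde g$ is the pullback metric, the length of that fiber is $pq$ times the length $2\pi c$ of the Hopf circle, i.e.\ $\tilde c = pq\,c$, not $\tilde c=c$. You can also see this from the formula after Proposition \ref{Sfm}, where $\tilde c = pq\bigl(1-d/2+\sum_{j=1}^{d-2}\beta_j/2+(1/2q)\beta_{d-1}+(1/2p)\beta_d\bigr)$ and the bracketed quantity is precisely $c$ for the Hopf lift with the given angles. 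The same factor $pq$ contaminates your uniqueness paragraph: the integral of $d\tilde\alpha$ over the base, once you account for the Seifert data at the two exceptional orbits, forces $\tilde c=pq\,c$, not $\tilde c=c$. This does not affect the validity of the lemma (which only asserts the existence of some constant $\tilde c$), but the value you state is off by the factor $pq$. A minor related slip: the branching order of $\Psi_{(p,q)}$ is $q$ along the fiber over $[0:1]$ (i.e.\ $\{z_1=0\}$) and $p$ along the fiber over $[1:0]$ (i.e.\ $\{z_2=0\}$); your labels ``near $[1,0]$'' and ``near $[0,1]$'' are swapped, though your final cone-angle conclusions are correct.
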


\section {Flat K\"ahler cone metrics on $\mathbb{C}^2$} \label{SECTC}

\subsection{Proof of Propostion \ref{Hfm}}

Let $L_j = \{ l_j (z,w) =0 \}$ for $j=1, \ldots, d$ be $d$ distinct complex lines through the origin in $\mathbb{C}^2$ with defining linear equations $l_j$. Let $\beta_1, \ldots, \beta_d \in (0,1)$ satisfy the Troyanov condition; $g$ be the spherical metric on $\mathbb{CP}^1$ with cone angle $2 \pi \beta_j$ at $L_j$ and  $\overline{g}$ be the lifted metric on $S^3$ by means of the Hopf map. We set 
\begin{equation}
g_F = dr^2 + r^2 \overline{g} ,
\end{equation}
to be the Riemannian cone with $(S^3, \overline{g})$ as a link, this is a metric on $(0, \infty) \times S^3$ with cone singularities along the products of the singular Hopf circles with the radial coordinate. We shall prove that there is a natural complex structure $I$ with respect to which $g_F$ is K\"ahler; that there is a natural identification of this complex manifold with $\mathbb{C}^2$, with repect to which the Reeb vector field generates the circle action $e^{it}(z,w)=(e^{it/c}z, e^{it/c}w)$  and the singularities of $g_F$ are along the original set of lines $L_j$ we started with.

We use  the same coordinates as in the proof of Lemma \ref{3sphere}, where the Hopf bundle is trivialized; so that points in $\mathbb{R}_{>0} \times (S^3 \setminus L) \cong (0, \infty) \times \mathbb{C} \setminus \lbrace a_1, \ldots, a_{d-1} \rbrace \times S^1$ have coordinates $(r, \xi, e^{it})$. Write $ \xi = x + iy$. Consider the almost-complex structure given by

$$ I \tilde{  \frac{\partial}{\partial x}  } = \tilde{  \frac{\partial}{\partial y}  } , \hspace{15mm}  I   \frac{\partial}{\partial r}   = \frac{1}{cr}   \frac{\partial}{\partial t}      $$
where

$$  \tilde{  \frac{\partial}{\partial x}  } =  \frac{\partial}{\partial x}  - \alpha \left( \frac{\partial}{\partial x} \right) \frac{\partial}{\partial t} , \hspace{15mm} \tilde{  \frac{\partial}{\partial y}  } =  \frac{\partial}{\partial y}  - \alpha \left( \frac{\partial}{\partial y} \right) \frac{\partial}{\partial t}  $$
are the horizontal lifts of $\partial / \partial x$ and $\partial / \partial y$. Finally set $\omega_F = g_F (I. , .)$.

\begin{claim}
	$ \left( (0, \infty) \times \mathbb{C} \setminus \lbrace a_1, \ldots, a_{d-1} \rbrace \times S^1    , g_F, I \right) $ is a K\"ahler manifold. I.e. $d\omega_F =0$ and $I$ is integrable. Moreover, 
	\begin{equation} \label{potential}
	\omega_F = \frac{i}{2} \partial \overline{\partial} r^2 .
	\end{equation}
	
\end{claim}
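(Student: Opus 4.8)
The plan is to verify the claim by exhibiting explicit coordinates in which everything is transparent, and then check the two assertions --- $d\omega_F = 0$ with $I$ integrable, and the K\"ahler potential formula $\omega_F = \tfrac{i}{2}\partial\overline\partial r^2$ --- by a direct computation in those coordinates. The key observation is that the claimed complex structure is designed so that $\partial/\partial r$ and the Reeb direction $(1/cr)\,\partial/\partial t$ span one complex line, while the horizontal lifts $\widetilde{\partial/\partial x}, \widetilde{\partial/\partial y}$ span the other; the base $(\mathbb{C}\setminus\{a_1,\dots,a_{d-1}\}, g)$ already carries its own integrable complex structure compatible with $g = e^{2\phi}|d\xi|^2$, so the horizontal part of $I$ is essentially pulled back from the base.

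First I would introduce the complex coordinate on the ``fiber $\times$ radial'' part: writing $\overline g = g + c^2\alpha^2$ with $\alpha = dt + \alpha_0$ as in Lemma \ref{3sphere}, so that $g_F = dr^2 + r^2 g + c^2 r^2 \alpha^2$, I would look for a function of the form $\zeta = r^{1/c} e^{it}\cdot(\text{something in }\xi)$ --- more precisely, recalling $\alpha_0 = \tfrac{i}{2c}(\partial u - \overline\partial u)$, the natural guess is a holomorphic coordinate built from $r$, $t$ and the potential $u$. The cleanest route: define two functions $z, w$ on the chart by the requirement that $z = l_1(z,w)$-type expressions hold, but concretely one checks that setting (up to the trivialization) $z_1 = \xi\,r^{1/c} e^{it}/\sqrt{1+|\xi|^2}\cdot e^{-\phi/\,?}$ ... rather than guess, I would proceed invariantly: show $dr^2 + c^2 r^2\alpha^2 = $ the flat metric $|d\sigma|^2$ on $\mathbb{C}^\ast$ where $\sigma = r^{1/c}e^{it}$ up to a conformal factor absorbing $u$, using the curvature identity $d\alpha_0 = -\tfrac{i}{c}\partial\overline\partial u$ from \eqref{curvature connection}. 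Then $g_F$ becomes a warped/twisted product whose K\"ahlerness reduces to the single scalar identity relating $\phi$, $u$ and the Liouville-type equation satisfied by $g$.

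Concretely, the heart of the computation is this: with $I$ as defined, $\omega_F(\,\cdot\,,\,\cdot\,) = g_F(I\,\cdot\,,\,\cdot\,)$ splits as $\omega_F = r^2 \omega_g^{\mathrm{hor}} + (\text{radial--Reeb part})$, where $\omega_g^{\mathrm{hor}}$ is the horizontal lift of the area form $dV_g$ and the radial--Reeb part is $c\, r\, dr \wedge \alpha$ up to constants. Computing $d$ of this and using $d\alpha = d\alpha_0 = \tfrac{1}{2c}K_g\,dV_g$ (equations \eqref{curvature connection} and the normalization $\int d\alpha_0 = 2\pi$), the terms $d(r^2)\wedge\omega_g^{\mathrm{hor}}$ and $r^2\, d\alpha$-contributions cancel precisely because of the factor $1/2c$ in the curvature and the factor $c$ in $\overline g = g + c^2\alpha^2$ --- this is exactly the Calabi-ansatz bookkeeping. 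Integrability of $I$ is checked by showing the $(0,1)$-distribution is closed under bracket: the horizontal $(0,1)$ part closes up to a vertical term proportional to $d\alpha^{(0,2)} = 0$ (the curvature is of type $(1,1)$ since $dV_g$ is), and the radial--Reeb $(0,1)$ vector $\partial/\partial r + \tfrac{i}{cr}\partial/\partial t$ commutes appropriately with the horizontal lifts because $\alpha_0$ does not involve $r$ or $t$. Finally, for \eqref{potential} I would verify directly that $r^2$ is a K\"ahler potential: compute $\tfrac{i}{2}\partial\overline\partial r^2$ in the coordinates $(r, \xi, t)$ using $d(r^2) = 2r\,dr$ and $d^c(r^2) = 2cr^2\alpha$ (which is the defining property of the Reeb field, $d^c r^2 = 2 r^2\, I^\ast(dr/r)\cdot r = $ the contraction that recovers $\alpha$), so that $\tfrac{i}{2}\partial\overline\partial r^2 = \tfrac14 dd^c r^2 = \tfrac{c}{2} d(r^2\alpha) = c\, r\, dr\wedge\alpha + \tfrac{c}{2} r^2 d\alpha$, and checking this equals $\omega_F$ term by term using $d\alpha = \tfrac{1}{2c}K_g dV_g$ and $\omega_F$'s horizontal part $= \tfrac14 r^2 K_g\, dV_g$ (since $g$ has the conformal form and its K\"ahler/area form relates to $K_g dV_g$ via the Liouville equation).

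The main obstacle I anticipate is purely bookkeeping rather than conceptual: getting all the constants ($c$, $1/2c$, factors of $2$ from $\partial\overline\partial$ versus $dd^c$, the $1/4$ versus $4$ discrepancies between ``Gaussian curvature $1$'' and ``curvature $4$'' normalizations that already appear in the passage from $g_{FS}$ to the spherical metric) to line up so that the potential formula holds on the nose. A secondary subtlety is that everything must be checked to extend smoothly across the singular Hopf circles $L$ in the appropriate (cone-coordinate) sense --- but since near a point of $L_j$ the metric $\overline g$ is the model $\overline g_{(\beta_j)}$ by Lemma \ref{3sphere}, $g_F$ is locally $g_{(\beta_j)}$ on $\mathbb{R}^4$, for which $I$ and the potential $r^2 = r_1^2 + r_2^2$ are manifestly the standard flat K\"ahler data in the cone coordinates $z_1 = r_1^{1/\beta_j}e^{i\theta_1}$, $z_2 = r_2 e^{i\theta_2}$; so the claim on this chart is immediate and only the global gluing via the trivializations \eqref{trivialization hopf 1}, \eqref{trivialization hopf 2} and \eqref{change coordinates hopf} needs care --- and that is precisely what the holonomy condition in Claim \ref{connection claim} was arranged to guarantee.
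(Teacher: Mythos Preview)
Your proposal is correct and follows essentially the same route as the paper: write $\omega_F = r^2 e^{2\phi}\,dx\wedge dy + c r\,dr\wedge\alpha$ in the coframe $\{dx,dy,dr,\alpha\}$, use the curvature identity $d\alpha = (1/2c)K_g\,dV_g$ (with $K_g=4$) for the cancellation giving $d\omega_F=0$, check integrability via the single nontrivial bracket $\bigl[\widetilde{\partial_x}+i\widetilde{\partial_y},\,\partial_r+\tfrac{i}{cr}\partial_t\bigr]=0$, and obtain \eqref{potential} from $d^c(r^2)=2cr^2\alpha$ (the paper writes this as $dId(r^2)=-2c\,d(r^2\alpha)$). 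The paper is simply terser: it does not take your detour through searching for explicit holomorphic coordinates (that is the content of the \emph{next} claim), it omits the remark about $d\alpha^{(0,2)}=0$ since in complex dimension two the horizontal $(0,1)$-space is one-dimensional and closes trivially, and it identifies the horizontal part of $\omega_F$ directly as $r^2\,dV_g$ rather than via $\tfrac14 r^2 K_g\,dV_g$ and the Liouville equation --- your formulation is equivalent once one remembers $K_g\equiv4$, but the direct reading from $g_F = dr^2 + r^2 g + c^2 r^2\alpha^2$ is cleaner.
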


\begin{proof}
	We compute in the coframe $\lbrace dx, dy, dr, \alpha \rbrace$ where
	
	$$ \omega_F = r^2 e^{2\phi} dx \wedge dy + cr dr \wedge \alpha , $$
	so that $ d\omega_F = 2re^{2\phi}dr dx dy - cr(2/c)e^{2\phi} drdxdy = 0$. The integrability of $I$ amounts to check that
	
	$$ \left[  \tilde{  \frac{\partial}{\partial x}  } + i  \tilde{  \frac{\partial}{\partial y}  },   \frac{\partial}{\partial r}   + i \frac{1}{cr}   \frac{\partial}{\partial t} \right] = 0  . $$
	Finally $dId(r^2) = d (2r I dr)=  - 2c d(r^2 \alpha) = -4cr dr \wedge \alpha - 4 r^2 e^{2\phi} dx \wedge dy$. Using that $2i \partial \overline{\partial} = -dId$ we deduce \ref{potential}

\end{proof}

\begin{claim}
	The functions 
	
	\begin{equation} \label{change coordinates}
	z= \xi w, \hspace{4mm} w = c^{1/2c}  r^{1/c} e^{u/2c}  e^{it}
	\end{equation}
	give a biholomorphism between $ (0, \infty) \times \mathbb{C} \setminus \lbrace a_1, \ldots, a_{d-1} \rbrace \times S^1$ with the complex structure $I$ and $\mathbb{C}^2 \setminus L$. If we write $\Omega = (\sqrt{2})^{-1} dz dw$,  then
	
	\begin{equation} \label{flat volume}
	\omega_F^2 = | l_1 |^{2\beta_1 - 2} \ldots |l_d|^{2\beta_d-2} \Omega \wedge \overline{\Omega} .
	\end{equation}
\end{claim}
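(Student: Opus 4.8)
The plan is to produce holomorphic coordinates, check that $z$ and $w$ are $I$-holomorphic, verify the map is a biholomorphism onto $\mathbb{C}^2\setminus L$, and finally compute both four-forms $\omega_F^{2}$ and $\Omega\wedge\overline{\Omega}$ in the real coordinates $(r,x,y,t)$ and compare. First I would make the connection form explicit: from $\alpha_0=\frac{i}{2c}(\partial u-\overline{\partial}u)$ and $\xi=x+iy$ one gets $\alpha_0=\frac{1}{2c}(u_y\,dx-u_x\,dy)$, so the horizontal lifts are $\widetilde{\partial/\partial x}=\partial/\partial x-\frac{u_y}{2c}\,\partial/\partial t$ and $\widetilde{\partial/\partial y}=\partial/\partial y+\frac{u_x}{2c}\,\partial/\partial t$. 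The definition of $I$ says precisely that $T^{0,1}$ is spanned by $Z_1=\widetilde{\partial/\partial x}+i\,\widetilde{\partial/\partial y}$ and $Z_2=\partial/\partial r+\frac{i}{cr}\,\partial/\partial t$, so a function is $I$-holomorphic iff it is annihilated by $Z_1$ and $Z_2$. One checks $Z_1\xi=1+i\cdot i=0$ and $Z_2\xi=0$, so $\xi$ is holomorphic; and writing $f=\log w=\frac{1}{2c}\log c+\frac{1}{c}\log r+\frac{u}{2c}+it$ a direct computation gives $Z_2 f=\frac{1}{cr}+\frac{i}{cr}\cdot i=0$ and $Z_1 f=\frac{1}{2c}(u_x+iu_y)+\big(-\frac{u_y}{2c}+i\frac{u_x}{2c}\big)i=0$. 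Hence $w=e^{f}$ is holomorphic and therefore so is $z=\xi w$.

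Next, bijectivity. For a fixed $\xi\in\mathbb{C}\setminus\{a_1,\ldots,a_{d-1}\}$ the map $(r,e^{it})\mapsto w=c^{1/2c}r^{1/c}e^{u(\xi)/2c}e^{it}$ is a diffeomorphism onto $\mathbb{C}^{*}$ (its modulus sweeps $(0,\infty)$, its argument sweeps $S^1$), so the domain is identified with $(\mathbb{C}\setminus\{a_j\})\times\mathbb{C}^{*}$ via $(\xi,w)$; composing with $(\xi,w)\mapsto(z,w)=(\xi w,w)$, which is injective since $\xi=z/w$, the image is exactly $\{w\neq 0,\ z\neq a_j w\ (j<d)\}=\mathbb{C}^2\setminus L$, with $l_j=z-a_j w$ for $j<d$ and $l_d=w$. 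Since $dz\wedge dw=w^{2}\,d\xi\wedge df\neq 0$ — the forms $d\xi$ and $df$ are linearly independent because $df$ has nonzero $dr$- and $dt$-components while $d\xi$ has neither — the holomorphic bijection $(z,w)$ is a biholomorphism.

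For the volume form I would use the expression $\omega_F=r^{2}e^{2\phi}\,dx\wedge dy+cr\,dr\wedge\alpha$ from the previous claim together with $\alpha\equiv dt$ modulo $dx,dy$, giving $\omega_F^{2}=2cr^{3}e^{2\phi}\,dx\wedge dy\wedge dr\wedge dt$. On the other side $\Omega\wedge\overline{\Omega}=\tfrac12\,dz\wedge dw\wedge d\bar z\wedge d\bar w=\tfrac12|w|^{4}\,d\xi\wedge df\wedge d\bar\xi\wedge d\bar f$, and expanding $df=\frac{1}{cr}dr+\frac{u_\xi}{2c}d\xi+\frac{u_{\bar\xi}}{2c}d\bar\xi+i\,dt$ one finds $d\xi\wedge df\wedge d\bar\xi\wedge d\bar f=\frac{2i}{cr}\,d\xi\wedge d\bar\xi\wedge dr\wedge dt=\frac{4}{cr}\,dx\wedge dy\wedge dr\wedge dt$, so $\Omega\wedge\overline{\Omega}=\frac{2|w|^{4}}{cr}\,dx\wedge dy\wedge dr\wedge dt$. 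Dividing, $\omega_F^{2}=\dfrac{c^{2}r^{4}e^{2\phi}}{|w|^{4}}\,\Omega\wedge\overline{\Omega}$; and since $|w|^{2}=c^{1/c}r^{2/c}e^{u/c}$, i.e. $r^{2}=c^{-1}|w|^{2c}e^{-u}$, this collapses to $\omega_F^{2}=|w|^{4(c-1)}e^{2\phi-2u}\,\Omega\wedge\overline{\Omega}$. It then remains to recognise the coefficient: $\phi-u=\sum_{j<d}(\beta_j-1)\log|\xi-a_j|$ gives $e^{2\phi-2u}=\prod_{j<d}|\xi-a_j|^{2\beta_j-2}$, while $2c=2-d+\sum_j\beta_j$ gives $4(c-1)=2\sum_{j}(\beta_j-1)$; combined with $|l_j|=|w|\,|\xi-a_j|$ for $j<d$ and $|l_d|=|w|$ this is precisely $\prod_{j=1}^{d}|l_j|^{2\beta_j-2}$, which is \ref{flat volume}.

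The only point that needs a little care is orientation: all the displayed four-forms must be taken with respect to the orientation determined by $I$ (for which $(z,w)$ are positively oriented holomorphic coordinates), and with that convention $\omega_F^{2}$, $\Omega\wedge\overline{\Omega}$ and $\prod_j|l_j|^{2\beta_j-2}$ are all positive, so the sign is forced and the identity of four-forms is genuine rather than up to sign. The main ``obstacle'' is otherwise purely bookkeeping — keeping straight the exponent $1/c$, the distinction between $u$ and $\phi$, and the constant $c$ — and no real difficulty arises.
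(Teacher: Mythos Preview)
Your proof is correct and follows essentially the same line as the paper's: you verify the Cauchy--Riemann equations for $\xi$ and $w$ with respect to the $(0,1)$-vectors $Z_1,Z_2$, argue bijectivity directly, and then reduce the volume identity to $\omega_F^{2}=c^{2}|w|^{-4}r^{4}e^{2\phi}\,\Omega\wedge\overline{\Omega}$, after which the same algebraic substitutions ($r^{2}=c^{-1}|w|^{2c}e^{-u}$, $\phi-u=\sum_{j<d}(\beta_j-1)\log|\xi-a_j|$, $4c-4=\sum_j 2(\beta_j-1)$) finish the job. The only visible difference is in how that intermediate volume identity is obtained: the paper introduces the unitary $(1,0)$-coframe $\tau_1=dr+icr\,\alpha$, $\tau_2=e^{\phi}r\,d\xi$, writes $dz,dw$ in this basis and reads off $|\det(a_{ij})|^{2}$, whereas you expand both $\omega_F^{2}$ and $\Omega\wedge\overline{\Omega}$ directly in the real coframe $dx,dy,dr,dt$ and divide; the two computations are equivalent and of comparable length.
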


\begin{proof}
	It is easy to see that the pair $(z, w)$ defines a diffeomorphism between the corresponding spaces.  The Cauchy-Riemann equations for a function $h$ to be holomorphic with respect to $I$ are given by
	
	$$  \frac{\partial h}{\partial r}   + i \frac{1}{cr}   \frac{\partial h}{\partial t} = 0, \hspace{15mm} \frac{\partial h}{\partial x}   
	+ i \frac{\partial h}{\partial y }  =  \alpha \left( \frac{\partial}{\partial x} + i \frac{\partial}{\partial y} \right) \frac{\partial h}{\partial t} . $$
	If we ask $h$ to have weight 1 with respect to the circle action the equations become
	$$ \frac{\partial h}{\partial r}  =  \frac{1}{cr}h ,  \hspace{15mm} \frac{\partial h}{\partial \overline{\xi}} = i \alpha \left( \frac{\partial}{\partial \overline{\xi}} \right) h = \frac{1}{2c} \frac{\partial u}{\partial \overline{\xi}} h . $$
	It is now easy to check that $z$ and $w$ are holomorphic. 
	
	Now we compute the volume form of $g_F$ in the complex coordinates $z, w$. First define a basis  $\lbrace \tau_1, \tau_2 \rbrace$  of the $(1, 0)$ forms
	
	\begin{equation} \label{orthbasis}
	\tau_1= dr + i cr \alpha , \hspace{4mm}  \tau_2 = e^{\phi} r d\xi  . 
	\end{equation}
	Up to a factor of $\sqrt{2}$ this is an orthonormal basis for the $(1,0)$ forms in $\mathbb{C}^2 \setminus L$,  i.e. 
	
	$$\omega_F = (i/2) \tau_1 \overline{\tau_1} + (i/2) \tau_2 \overline{\tau_2} .$$ 
	Define a two by two matrix $(a_{ij})$ by means of
	
	$$ dz = a_{11} \tau_1 + a_{12} \tau_2, \hspace{4mm} dw = a_{21} \tau_1 + a_{22} \tau_2 . $$
	From here we get
	$$ \Omega \wedge \overline{\Omega} = | \det (a_{ij}) |^2 \omega_F^2 . $$
	Since $ z = \xi w$ we have that $ a_{11} = \xi a_{21}$ and $ a_{12} = \xi a_{22} + w e^{-\phi} r^{-1}$. It follows that $\det (a_{ij}) = -w e^{-\phi} r^{-1} a_{21}$. We can  easily compute,  from the formula given for $w$,  that  $a_{21} = (1/cr) w$. We put things together to get 
	
	$$ \omega_F^2 = c^2  |w|^{-4} r^4 e^{2\phi} \Omega \wedge \overline{\Omega} . $$
	Now we use that $ r^4 = (1/c^2)  |w|^{2c} e^{-2u}$, $\phi - u = \sum_{j=1}^{d-1} (\beta_j -1) \log | (z/w) - a_j |$ and $ 4c -4 = \sum_{j=1}^d (2\beta_j -2)$ to conclude that
	
	$$ \omega_F^2 = |z - a_1 w|^{2\beta_1 -2} \ldots |z - a_{d-1} w|^{2\beta_{d-1} -2} |w|^{2\beta_d -2} \Omega \wedge \overline{\Omega} .$$
	This is formula \ref{flat volume}.
	
\end{proof}

Note that we have two natural systems of coordinates: the complex coordinates $(z,w)$ and the spherical coordinates $(r,\theta)$,  where $\theta$  denotes a point in the 3-sphere. For $\lambda >0$ define $D_{\lambda} (r , \theta) = (\lambda r, \theta)$ and $m_{\lambda} (z, w) = (\lambda z, \lambda w)$.  Equation \ref{change coordinates} gives that $D_{\lambda} = m_{\lambda^{1/c}}$ and Equation \ref{potential} implies that $m_{\lambda}^{*} g_F = \lambda^{2c} g_F$. The proof of the existence part proof of Proposition \ref{Hfm} is now complete. 

We have obtained a  a recipe which allows us  to go from the flat metric $g_F$ on $\mathbb{C}^2$ in Proposition \ref{Hfm} to the corresponding spherical metric $g$ on $\mathbb{CP}^1$  and vice versa.
From \ref{change coordinates} we get

\begin{equation} \label{form1}
r^2 = \frac{1}{c} |w|^{2c} e^{-u} .
\end{equation}
We recall that 
\begin{equation} \label{form2}
u = \phi -   \sum_{j=1}^{d-1} (\beta_j -1) \log |\xi - a_j| ,   \hspace{3mm} g = e^{2\phi} |d\xi|^2 .
\end{equation}
Where $\phi$ a function of $\xi = z/w$. We are writing  the lines as $ L_j = \lbrace z = a_j w \rbrace$ with $a_j \in \mathbb{C}$  for $j = 1, \ldots , d-1$ and $ L_d= \lbrace w=0 \rbrace$. 
\ref{form1} together with \ref{form2} allow us to write $g_F$ explicitly in terms of $g$  and vice-versa. As a check, let us recall the rugby ball metric

\begin{equation} \label{ex1}
g = \beta^2 \frac{|\xi|^{2\beta}}{(1+ |\xi|^{2\beta})^2} |d\xi|^2 , 
\end{equation}
We use our formula \ref{form1} to get $r^2 = \beta^{-2}(|z|^{2\beta} + |w|^{2\beta})$,  so that $g_F = |z|^{2\beta-2} |dz|^2 + |w|^{2\beta-2} |dw|^2$. Up to a constant normalizing factor this is the space $\mathbb{C}_{\beta} \times \mathbb{C}_{\beta}$. 

\begin{remark}
	Since the lenght of any Hopf circle with respect to \( \overline{g} \) is \(2\pi c \); we conclude that the restriction of \(g_F\) to any complex line which goes through the origin, is the metric of a 2-cone with total angle \(2\pi c \).
\end{remark}

The uniqueness statement in Proposition \ref{Hfm} is a consequence of the uniqueness of spherical metrics -Theorem \ref{TLT}-; since given the metric \(g_F\), we can use equations \ref{form1} and \ref{form2} to get the corresponding spherical metric on the projective line.

\subsection{Proof of Proposition \ref{Sfm}}
Let $d \geq 2$, $1 \leq p < q$ co-prime and  $C_j = \{z^q =a_j  w^p \}$, $a_j \in \mathbb{C}$ for $j=1, \ldots, d-2$ be distinct complex curves through the origin in $\mathbb{C}^2$. Let $\beta_1, \ldots, \beta_{d-2} \in (0,1)$ and $0<\beta_{d-1}, \beta_d \leq1$ be such that $\beta_1, \ldots, \beta_{d-2}, (1/q)\beta_{d-1}, (1/p)\beta_d$ satisfy the Troyanov condition \ref{Tcondition} if $ d\geq 3$ and $\beta_{d-1} / q = \beta_d /p$ if $d=2$. In $\mathbb{C}^2$ with complex coordinates $(u,v)$ consider the metric $g_F$  given by Proposition \ref{Hfm} with cone angles $\beta_1, \ldots, \beta_{d-2}, (1/q)\beta_{d-1}, (1/p)\beta_d$ along the lines $L_j = \{u =a_j  v \}$ for $j=1, \ldots, d-2$, $\{u=0\}$ and $\{v=0\}$.  Let $S : \mathbb{C}^2 \to \mathbb{C}^2$ be given by
\begin{equation}
(u, v) = S(z, w)= (z^q, w^p) .
\end{equation}
Proposition \ref{Sfm} follows by setting $\tilde{g}_F = S^{*}g_F$; it is also clear that $\tilde{g}_F$ is isometric to the Riemannian cone with link the metric $\tilde{g}$ given by Lemma \ref{slema}.

As an example, we let $ 1 -1/m -1/n < \beta < 1 -1/m + 1/n$ and set $g_F$ to be the flat metric in $\mathbb{C}^2$ with cone angles $2 \pi (1/n)$ along $\{u=0\}$, $2 \pi (1/m)$ along $\{v=0\}$ and $\beta$ along $\{u=v\}$, then $\tilde{g}_F$ has cone angle along the curve $\{z^n=w^m\}$. In particular we have that for any $1/6< \beta < 5/6$ there is a flat K\"ahler cone metric in $\mathbb{C}^2$ with cone angle $2 \pi \beta$ along the cuspidal cubic $\{w^2=z^3\}$.

\subsection{Proof of Proposition \ref{Clas}}

The proof of Proposition \ref{Clas} is included only for the sake of completeness. We follow the arguments given in Lemma 3.9 and Proposition 3.10 of \cite{panov}; and refer to \cite{panov} for a more detailed exposition.

Let $g_C = dr^2 + r^2 g_S$ be a flat K\"ahler metric  with cone singularities, with link \(S\) diffeomorphic to the $3$-sphere. The fact that \(g_C\) is flat implies that \(g_S\) is spherical. 
There is a orthogonal parallel complex structure $I$ on $(0, \infty) \times S$; the Reeb vector field is 
\begin{equation*}
\xi = I \left( r\frac{\partial}{\partial r}\right) .
\end{equation*}
We think of $S$ as lying inside the cone by means of the isometric embedding which takes $p \in S$ to $(p, 1) \in S \times (0, \infty)$. The restriction of $\xi$ to $S$ is a unit length Killing vector field and its orbits define a one-dimensional foliation of $S$. There are two cases to consider:

\begin{itemize}
	\item All the orbits are periodic. The  flow of $\xi$ defines a locally free $S^1$-action on $S$ by isometries which provides \(S\) with the structure of a Seifert bundle. The classification of Seifert bundles whose total space is the $3$-sphere -see \cite{orlik}-, implies that -up to a conjugation by a diffeomorphism- the $S^1$-action is given by $ e^{it}(z_1, z_2)= (e^{it}z_1, e^{it}z_2)$ if it is free and  $ e^{it}(z_1, z_2)= (e^{imt}z_1, e^{int}z_2)$ with $1 \leq m < n$ for some co-prime numbers $m$ and $n$ if not. We can push \(g_S\) to the quotient to obtain a metric on the $2$-sphere with cone singularities and constant curvature \(4\). The uniqueness statements in Lemma \ref{3sphere} and Lemma \ref{slema}, imply  that $g_S$ must be isometric to one of the metrics $\overline{g}$ of Lemma \ref{3sphere} in the free case or to  one of the metrics $\tilde{g}$ of Lemma \ref{slema} in the locally free but not free case. It follows that \(g_C\) must agree with one of the metric of Propositions \ref{Hfm} or \ref{Sfm}.
	
	\item If there is a non-closed orbit of $\xi$ then there is a $2$-dimensional torus $T^2$ which acts by holomorphic isometries on \(C\). Write \(L\) for the singular locus of \(g_C\) and let \(E\) be the \emph{enveloping map}, which goes from the universal cover of \( C \setminus L \) to \( \mathbb{C}^2 \) and sends the apex of the cone to \(0\). There is an induced action of \( \mathbb{R}^2\) on the euclidean \( \mathbb{C}^2 \) which fixes \(0\) and makes \(E\) equivariant. This action factors through \(T^2\) and we can assume that it is given by rotations on each of the factors \( \mathbb{C} \times \mathbb{C} \). The branching locus of \(E\) is the union of lines through \(0\) invariant by \(T^2\), so it must be the set \( \{z_1 z_2 =0\} \). It follows that
	\[ E: E^{-1} (\mathbb{C}^2 \setminus \{z_1 z_2 =0\}) \to \mathbb{C}^2 \setminus \{z_1 z_2 =0\} \]
	is a covering map; and therefore \(g_C\) is a product of two $2$-cones.
\end{itemize}

\subsection{Hermitian metrics on line bundles: A different approach}

We mention another approach to Proposition \ref{Hfm} which gives the metric in $\mathbb{C}^2$ directly in terms of the metric in the projective line, avoiding to go through the 3-sphere. We take the point of view of a K\"ahler metric as the curvature form of a Hermitian metric on a complex line bundle. We discuss the Hopf bundle case, for the Seifert bundle case there is a parallel discussion in which one replaces the projective line with the weighthed $\mathbb{P}(m,n)$.

We think of $\mathbb{C}^2$ as the total space of  $\mathcal{O}_{\mathbb{CP}^1} (-1)$ with the zero section collapsed at $0$. The bundle projection is given by $\Pi : \mathbb{C}^2 \setminus \lbrace 0 \rbrace \to \mathbb{CP}^1$, $\Pi (z, w) = [z:w]$. We can then identify (smooth) Hermitian metrics on $\mathcal{O}_{\mathbb{CP}^1} (-1)$ with (smooth) functions $ h : \mathbb{C}^2 \to \mathbb{R}_{\geq 0}$ such that $h( \lambda p) = | \lambda |^2 h(p)$ for all $\lambda \in \mathbb{C}$, $ p \in \mathbb{C}^2$ and $h(p) =0$ only when $p=0$.
The first basic fact we need is that an area form $\omega$ in $\mathbb{CP}^1$ induces a Hermitian metric $h_{\omega}$.  We use coordinates $\xi =z/w$, $\eta=w/z$ on $\mathbb{CP}^1$. Write $ \omega= e^{2\phi} (i/2) d\xi d\overline{\xi}$ with $\phi = \phi(\xi)$ on $U =\Pi (\lbrace w\not= 0 \rbrace)$ and $ \omega= e^{2\psi} (i/2) d\eta d\overline{\eta}$ with $\psi= \psi(\eta)$ on $V =\Pi (\lbrace z\not= 0 \rbrace)$. Then $h_{\omega}$ is given by

\begin{equation}
h_{\omega} = |w|^2 e^{-\phi}, \hspace{2mm} \mbox{if} \hspace{2mm} w \not= 0 ; \hspace{4mm} h_{\omega} = |z|^2 e^{-\psi}, \hspace{2mm} \mbox{if} \hspace{2mm} z \not= 0 .
\end{equation}
The second basic fact is that a Hermitian metric $h$ gives a 2-form $\omega_h$ on $\mathbb{CP}^1$ by means of

\begin{equation}
\omega_h = i \partial \overline{\partial} \log h(\xi, 1) \hspace{2mm} \mbox{on} \hspace{2mm} U, \hspace{2mm} \mbox{and} \hspace{2mm} \omega_h = i \partial \overline{\partial} \log h(1, \eta) \hspace{2mm} \mbox{on} \hspace{2mm} V .
\end{equation}
We also mention that $h$ induces Hermitian metrics on the other complex line bundles over $\mathbb{CP}^1$. A linear function $l(z, w)= z -a w$ on $\mathbb{C}^2$  can be regarded as a section of $\mathcal{O}_{\mathbb{CP}^1} (1)$,  then we have $| l|^2_{h} = h(\xi, 1)^{-1} | \xi - a|^2$
on $U$ and a corresponding expression on $V$.

One can then rephrase  the existence of the spherical metric with cone singularities $g$ on $\mathbb{CP}^1$  by saying that there is a Hermitian metric $h$, continuous on $\mathbb{C}^2$ and smooth outside $L$ such that

\begin{equation} \label{csc}
h= | l_1|_{h}^{\beta_1-1} \ldots |l_d|_h^{\beta_d-1} h_{\omega_h}
\end{equation}
Where by $|l|_h$ we mean $|l|_h \circ \Pi$. Here we could be more precise and instead of saying that $h$ is merely continuous we could give a local model for $h$ around the singular points. 
From \ref{csc} one gets that $\omega_h$ has constant Gaussian curvature equal to $2c = 2 -d + \sum_{j=1}^d \beta_j$ outside $L$ and one can  argue that  $(2\pi)^{-1} \int_{\mathbb{CP}^1} \omega_h = 1$.
The potential for $\omega_F$ is then given by $r^2 = a h^{c}$ for some constant $a>0$ determined by the volume normalization.

\subsection{Quotients and Unitary Reflection Groups} \label{URG}

We begin by recalling the well-known Du Val singularities. Let \( \Gamma \subset SU(2)\) be a finite subgroup, up to conjugation, we can assume that it is one of following list:
\(C_m \) -cyclic of order \(m\)- for some \(m \geq 2 \); 
\( D_{2m} \) -binary dihedral of order \(4m\)- for some \(m \geq 2\);
\(T\)  -binary tetrahedral-;
\(O\) -binary octahedral-;
\(I\) -binary icosahedral-.
Basic work of Klein shows that there are three homogeneous polynomials \( z, w, t \in \mathbb{C}[x_1, x_2] \), and \( p \in \mathbb{C}[z, w, t] \), which define a complex isomorphism between the orbit space \( \mathbb{C}^2 / \Gamma \) and the complex surface \( S= \{ p(z, w, t) =0 \} \subset \mathbb{C}^3\). This surface has an isolated singular point at \(0\), referred as a Du Val -or simple- singularity; the list of these is

\begin{itemize}
	\item \(A_m\), \(m \geq 1\): \( S= \{t^2 + w^2 = z^{m+1}\} \), \( \Gamma=C_{m+1}  \).
	\item \(D_m\), \( m \geq 4 \): \( S=\{ t^2 + z w^2 = z^{m-1} \} \), \( \Gamma = D_{2(m-2)} \).
	\item \(E_6\): \( S=\{ t^2 + w^3 = z^4 \} \), \( \Gamma = T \).
	\item \(E_7\): \(S = \{t^2 + w^3= wz^3\} \), \( \Gamma = O \).
	\item \(E_8\): \(S= \{t^2 + w^3 = z^5\} \), \( \Gamma = I \).
\end{itemize}

The map \( S \to \mathbb{C}^2 \) given by \( (z, w, t) \to (z, w) \) is a double cover, branched along the curve \( C  \subset \mathbb{C}^2 \) composed by the points \( (z,w) \) such that \( (z, w, 0) \in S \). This curve has an isolated singularity at the origin, these are the so-called simple plane curve singularities
\begin{equation}
A_m : \hspace{1mm} w^2=z^{m+1}, \hspace{3mm} D_m : \hspace{1mm} zw^2= z^{m-1}, \hspace{3mm}
E_6 : \hspace{1mm} w^3=z^4, \hspace{3mm} E_7 : \hspace{1mm} w^3 = w z^3, \hspace{3mm} E_8 : \hspace{1mm} w^3=z^5 .
\end{equation} 

The group \( \Gamma\) acts freely on \(S^3\), it preserves the round metric \(g_{S^3(1)}\) so we get a constant curvature metric \( g_{S^3/\Gamma(1)}\) on \(S^3/\Gamma\). The push-forward of the euclidean metric on \(\mathbb{C}^2\) by \(\Gamma\) is a flat K\"ahler cone metric on \(S\), isometric to \( dr^2 + r^2  g_{S^3/\Gamma(1)} \). In the search of flat metrics with cone singularities on \( \mathbb{C}^2 \), it is natural to ask whether it is possible to extend \( \Gamma \) to a finite group \( G \subset U(2) \) so that \( \Gamma \subset G \) is normal and the quotient \( H= G/ \Gamma \) acts on \(S\) in a way that \( S/ H \cong \mathbb{C}^2 \) -note that \( S/ H = \mathbb{C}^2 / G \)-. For example, we can look for \(G\) such that \( H \cong \mathbb{Z}_2 \) acts on \(S\) as \( (z, w, t) \to (z, w, -t) \) so that we can push-forward the euclidean metric to get a metric with cone angle \( \pi \) along the plane curve \( C \); as we shall explain this is always possible. Fortunately, finite groups of unitary matrices with the property that \( \mathbb{C}^2/ G \cong \mathbb{C}^2 \) are well understood; these are called unitary reflection groups, we refer to \cite{URG} for the results regarding their classification. 

A unitary linear map $A$ of $\mathbb{C}^n$ is called a reflection if $A$ fixes a hyperplane and $A^m$ is the identity for some $m \geq 2$; equivalently there is an orthonormal basis of $\mathbb{C}^n$ with respect to which $A$ is represented as a diagonal matrix $diag( \epsilon, 1, \ldots, 1)$ with $\epsilon^m =1$. The smallest $m$ is called the order of $A$. A finite group of unitary matrices $G \subset U(n)$ is called a unitary reflection group if it is generated by reflections. A classical theorem of Shephard-Todd-Chevalley characterizes unitary reflection groups as the only finite groups $G$ of unitary matrices with the property that the orbit space  $\mathbb{C}^n /G$ is isomorphic to $\mathbb{C}^n$; or equivalently  the algebra of invariant polynomials $\mathbb{C}[x_1, \ldots, x_n]^G$ is isomorphic to $\mathbb{C}[X_1, \ldots, X_n]$. Shephard-Todd classified these groups. 

Given a unitary reflection group $G \subset U(2)$ let $X_1, X_2 \in \mathbb{C}[x_1, x_2]$ be homogeneous polynomials of smallest degree invariant under the action of $G$ and such that the map $\Phi : \mathbb{C}^2 \to \mathbb{C}^2$ defined as $\Phi = (X_1, X_2)$, factors through the quotient to give an isomorphism $\mathbb{C}^2/G \cong \mathbb{C}^2$. Let $F=\cup_{i=1}^r F_i$ be the union of all complex lines $F_i$ through the origin which are fixed by  some reflection in $G$, this set $F$ coincides with the set of critical points of $\Phi$. We can then push-forward the euclidean metric with $\Phi$ to get a flat K\"ahler metric $\Phi_{*}g_{euc}$ in $\mathbb{C}^2$ with cone singularities along $\Phi(F)$. This metric has cone angle $2\pi \beta_i$ along $\Phi (F_i)$ where $\beta_i = 1/m_i$, with $m_i$ being the least common multiple of the orders of the reflections which fix $F_i$. Since the group $G$ preserves the distance of the points to the origin, it is clear that $\Phi_{*}g_{euc}$ is a Riemannian cone with its apex at the origin, its link is a spherical metric on the three-sphere with cone angle $2\pi \beta_i$ along the intersection of $\Phi(F_i)$ with the unit sphere. Since the $S^1$-action $e^{it}(x_1, x_2) = (e^{it}x_1, e^{it}x_2)$ commutes with the action of $G$ there is an induced $S^1$ action on $\mathbb{C}^2/G$ under which $\Phi_{*} g_{euc}$ is invariant. Indeed this $S^1$-action can be identified with the action generated by the Reeb vector field of $\Phi_{*}g_{euc}$ and it follows that $\Phi_{*}g_{euc}$ must be given by either Proposition \ref{Hfm} or Proposition \ref{Sfm} and therefore it must correspond to a spherical metric in the projective line. 

Before diving into the classification of reflection groups, we analyze the case of \( \Gamma= C_m \). Write \( \omega_m =e^{2\pi i/m} \), so that 

\[ C_m = \langle
\left( \begin{array}{cc}
\omega_{m} & 0 \\
0 & \omega^{-1}_{m} \end{array} \right) \rangle \subset SU(2) .
\]
The invariant polynomials $w=(1/2) (x_1^m + x_2^m)$, $t=(1/2i)(x_1^m -x_2^m)$ and $z=x_1 x_2$ give us the complex isomorphism
$$ \mathbb{C}^2/C_m \cong \{ (z, w, t) \in \mathbb{C}^3 : \hspace{2mm} w^2 + t^2 = z^m \}.$$
Consider the transposition $T(x_1, x_2) = (x_2, x_1)$ and let $G(m, m, 2)$ -notation to be explained later- be the group generated by $C_m$ and $T$, so that $C_m \subset G(m, m, 2)$ is a normal subgroup of index two. The action of $T$ on $(z, w, t) \in \mathbb{C}^2 /C_m$ sends $(z, w, t) \to (z, w, -t)$. We conclude that $\Phi (x_1, x_2) = (z, w)$ is invariant under the action of $G(m, m, 2)$ and gives us a complex isomorphism $\mathbb{C}^2 / G(m, m, 2) \cong \mathbb{C}^2$ the metric $\Phi_{*}g_{euc}$ has cone angle $\pi$ along the curve $\{ w^2= z^m \}$. 

We go further and consider the group $G(2m, 2, 2)$ given by 
\[ G(m, m, 2)= \langle \left( \begin{array}{cc}
\omega_{m} & 0 \\
0 & \omega^{-1}_{m} \end{array} \right), \left( \begin{array}{cc}
0 & 1 \\
1 & 0 \end{array} \right) \rangle  \subset G(2m, 2, 2) = \langle G(m, m, 2), \left( \begin{array}{cc}
\omega_{2m} & 0 \\
0 & \omega_{2m} \end{array} \right) \rangle .\] 
So that \( G(m, m, 2) \subset G(2m, 2, 2) \) is a normal subgroup of index $2m$. The quotient $G(2m, 2, 2)/ G(m, m, 2)$ is cyclic and its generator acts on $(z, w)$  by sending it to $(\omega_m z, -w)$. We conclude that $u=z^m$ and $v=w^2$ are invariant under the action of $G(2m, 2, 2)$ and $\Psi (x_1, x_2) = (u, v)$ gives a complex isomorphism between the orbit space and $\mathbb{C}^2$. Note that $\Psi = S_{(2, m)} \circ \Phi$, where $S_{(2, m)}(z, w) = (z^m, w^2)$. The metric $\Psi_{*} g_{euc}$ has cone angle $\pi$ along the complex lines $\{v=0\}$ and $\{u=v\}$ and cone angle $2\pi (1/m)$ along $\{u=0\}$. We will now see that this correspond under Proposition \ref{Hfm} to a spherical metric $g$ on $\mathbb{CP}^1$ with cone angle $\pi$ at $1$ and $\infty$ and cone angle $2\pi(1/m)$ at $0$. Indeed the components of the map $\Psi (x_1, x_2) = (x_1^m x_2^m, (1/4)(x_1^m + x_2^m)^2)$ are homogeneous polynomials of degree $2m$ and therefore induce a map of the projective line to itself of degree $2m$; in the complex coordinate $\eta = x_1/x_2$ this map writes 
$$\Psi (\eta) = \frac{4 \eta^m}{(1 + \eta^m)^2} .$$
We have that $\Psi(1)=1$, $\Psi (\omega_{2m}) = \infty$ and $\Psi (0) = 0 $; $1$ and $\omega_{2m}$ are critical points of $\Psi$ of order $1$ and $0$ is a critical point of order $m-1$. Let $T$ be the spherical triangle delimited by the arc of the unit circle between $1$ and $\omega_{2m}$ and the two segments of length $1$ connecting $\omega_{2m}$ and $1$ to $0$. We recognize $\Psi$ as a Riemann mapping of $T$ and the spherical metric $g$ on $\mathbb{CP}^1$ as the doubling of $T$. The potential for the euclidean metric is $|x_1|^2 + |x_2|^2$, expressing this in terms of $u$ and $v$ gives the potential for the metric $g_F = \Psi_{*}g_{euc}$ ($r^2$ in Proposition \ref{Hfm} ), up to a constant factor it is
$$\left( h + (h^2 - |u|^2)^{1/2} \right)^{1/m} + \left( h - (h^2-|u|^2)^{1/2} \right)^{1/m}$$
where $h = |v| + |u-v|$. We can use equations \ref{form1} and \ref{form2} to obtain the corresponding expression for the spherical metric $g$ and check that indeed the pull-back of $g$ by $\Psi$ agrees with the standard round metric.  When $m=2$ the expressions simplify to give

$$ r^2 = a \left( |u| + |v| + |u-v| \right)^{1/2}$$
where $a= 8\sqrt{2}$ is determined by the volume normalization condition;  and (using \ref{form1}, \ref{form2})
$$ g = \frac{1}{8} \frac{1}{|\xi| |\xi -1| + | \xi|^2 |\xi-1| + |\xi||\xi-1|^2} |d\xi|^2 . $$
If we write $\xi = \Psi (\eta) = (1+ \eta^2)^2 / (4\eta^2)$, then $\Psi^{*}g= (1+ |\eta|^2)^{-2} |d\eta|^2$ the standard round metric of curvature $4$.

The unitary reflection groups which act irreducibly on $\mathbb{C}^2$ divide into two types: primitive and imprimitive. The group $G$ is called imprimitive if we can find a direct sum decomposition $\mathbb{C}^2 = \mathbb{C}v_1 \oplus \mathbb{C}v_2$ such that the action of $G$ permutes the subspaces $\mathbb{C}v_1$ and $\mathbb{C}v_2$, otherwise it is called primitive. The subspaces $\mathbb{C}v_1, \mathbb{C}v_2$ are said to be a system of imprimitivity for $G$.

\begin{itemize}
	
	\item Let $m>1$ be a natural number and set $\omega_m =e^{2\pi i/m}$. Write $\mathcal{C}_m$ for the cyclic group of $m$-roots of unity generated by $\omega_m$. Let $p \geq 1$ be a natural number that divides $m$ and set $H$ to be the subgroup of  the direct product $\mathcal{C}_m \times \mathcal{C}_m$ consisting of all pairs $(\omega_m^i, \omega_m^j)$ such that $(\omega_m^i \omega_m^j)^{m/p}=1$; note that if $p=1$ then $H= \mathcal{C}_m \times \mathcal{C}_m$. We embed $H$ in $U(2)$ by means of the diagonal action $(\omega_m^i, \omega_m^j) (x_1, x_2) = (\omega_m^i x_1, \omega_m^j x_2)$ and define $G(m, p, 2)$ to be the subgroup of $U(2)$ generated by $H$ and the transposition $T(x_1, x_2) = (x_2, x_1)$. The lines $\mathbb{C}e_1$ and $\mathbb{C}e_2$ form a system of imprimitivity for this group. The notation $G(m, p, 2)$ is due to Shephard-Todd, the $2$ at the end simply means that we are working in two complex dimensions. The fact is that, for $(m, p) \not= (2,2)$, the group $G(m, p, 2)$ is a unitary imprimitive reflection group which acts irreducibly in $\mathbb{C}^2$ and that any such a group is conjugate to some $G(m, p, 2)$ for some values of $m$ and $p$. $G(m, p, 2)$ is a normal subgroup of $G(m, 1, 2)$ of index $p$, the order of $G(m, p, 2)$ is $2m^2/p$. There is a natural inclusion $G(m, 1, 2) \subset G(2m, 2, 2)$ as an index two subgroup induced from $\mathcal{C}_m \subset \mathcal{C}_{2m}$. 
	
	We have already discussed in detail the case of $G(2m, 2, 2)$, the corresponding quotient metric $g_F$ has cone singularities along three complex lines of angles $\pi$, $\pi$ and $2\pi (1/m)$. The general case of the group $G(m, p, 2)$ follows by pulling-back $g_F$ with $(z, w) \to (z^2, w^p)=(u, v)$ thus fitting in with Proposition \ref{Sfm}.
	
\end{itemize}

Let $G$ be a primitive unitary reflection group of $U(2)$ for $g \in G$ take $\lambda_g \in \mathbb{C}$ such that $\lambda_g^2 = \det (g)$. Define $\hat{G} = \{ \pm \lambda^{-1}_g g : \hspace{2mm} g \in G \} \subset SU(2)$. Since $G$ is primitive it follows that $\hat{G}$ must be a binary tetrahedral $T$, octahedral $O$ or icosahedral $I$ group, this splits the primitive subgroups into three types. Shephard-Todd classified these primitive groups by looking at the algebra of invariant polynomials and using the work of Klein on invariant theory for finite subgroups of $SU(2)$, in total there are $19$ primitive groups in $U(2)$ up to conjugation. Denote by $\mathcal{C}_m$ the cyclic group of order $m$ of scalar matrices. Define $\mathcal{T} = \mathcal{C}_{12} \circ T$, $\mathcal{O} = \mathcal{C}_{24} \circ O$ and $ \mathcal{I} = \mathcal{C}_{60} \circ I$; the circle in the notation means the subgroup generated in $U(2)$ by the respective cyclic and binary groups. It turns out that these are primitive unitary groups and that $\hat{G} \subset T$ ($O$ or $I$)  if and only if $ G \subset \mathcal{T}$ ($\mathcal{O}$ or $\mathcal{I}$).

\begin{itemize}
	
	\item  There are $4$ groups of tetrahedral type, all of them are subgroups of $\mathcal{T}$. The order of $\mathcal{T}$ is $12^2=144$. The invariant polynomials can be taken to be $f^3$ and $t^2$ where $f= x_1^4 + 2i \sqrt{3} x_1^2 x_2^2 +x_2^4$ and $t= x_1^5 x_2 - x_1 x_2^5 $. The map $\Phi = (f^3, t^2)$ induces a map in the projective line $\Phi (\eta) = H(t)$, where $\eta = x_1/x_2$, $t=\eta^2$ and
	$$ H(t) = \frac{(t^2 + 2i \sqrt{3} t +1)^3}{t (t^2-1)^2}.$$
	The map $\Phi$ has degree $12$ and looking at its critical points it can be seen to be a Riemann mapping for a spherical triangle with angles $\pi/2$, $\pi/3$ and $\pi/3$. The quotient of the euclidean metric by the group $\mathcal{T}$ is then identified with the lift $g_F$ of the spherical metric with cone angles $\pi$, $2\pi /3$ and $2\pi /3$ by means of the Hopf bundle given by Proposition \ref{Hfm}. The quotient of the euclidean metric by the remaining $3$ tetrahedral groups are obtained as pull-backs of $g_F$ by means of suitable branched covers as in Proposition \ref{Sfm}.

	\item  There are $8$ groups of octahedral type, all of them are subgroups of $\mathcal{O}$. The order of $\mathcal{O}$ is $24^2=576$. The invariant polynomials can be taken to be $h^3$ and $t^2$ where $h= x_1^8 + 14 x_1^4 x_2^4 +x_2^8$ and $t= x_1^{12} - 33x_1^8 x_2^4 - 33x_1^4x_2^8 + x_2^{12} $. The map $\Phi = (h^3, t^2)$ induces a map in the projective line $\Phi (\eta) = H(t)$, where $\eta = x_1/x_2$, $t=\eta^4$ and
	$$ H(t) = \frac{(t^2 + 14 t +1)^3}{(t ^3 - 33t^2 -33t +1)^2}.$$
	The map $\Phi$ has degree $24$ and looking at its critical points it can be seen to be a Riemann mapping for a spherical triangle with angles $\pi/2$, $\pi/3$ and $\pi/4$. The quotient of the euclidean metric by the group $\mathcal{O}$ is then identified with the lift $g_F$ of the spherical metric with cone angles $\pi$, $2\pi /3$ and $2\pi /3$ by means of the Hopf bundle given by Proposition \ref{Hfm}. The quotient of the euclidean metric by the remaining $7$ octahedral groups are obtained as pull-backs of $g_F$ by means of suitable branched covers as in Proposition \ref{Sfm}.
	
	\item There are $7$ groups of icosahedral type, all of them are subgroups of $\mathcal{I}$. The order of $\mathcal{I}$ is $60^2=3600$. The invariant polynomials can be taken to be homogeneous polynomials of degree $60$ which define a map $\Phi $ in the projective line $\Phi (\eta) = H(t)$, where $\eta = x_1/x_2$, $t=\eta^5$ and
	$$ H(t) = \frac{  (t ^4 -228 t^3 + 494 t^2 + 228 t + 1)^3 }{ (t^6 + 522 t^5 - 10005 t^4 - 10005t^2 - 522 t +1)^2}.$$
	The map $\Phi$ has degree $60$ and looking at its critical points it can be seen to be a Riemann mapping for a spherical triangle with angles $\pi/2$, $\pi/3$ and $\pi/5$. The quotient of the euclidean metric by the group $\mathcal{T}$ is then identified with the lift $g_F$ of the spherical metric with cone angles $\pi$, $2\pi /3$ and $2\pi /5$ by means of the Hopf bundle given by Proposition \ref{Hfm}. The quotient of the euclidean metric by the remaining $6$ icosahedral groups are obtained as pull-backs of $g_F$ by means of suitable branched covers as in Proposition \ref{Sfm}.
	
\end{itemize}

Recall that a Riemann mapping from the upper half-plane to a  triangle $T$ whose sides are circle arcs is obtained as the quotient of two linearly independent solutions of the hypergeometric equation, with a suitable value of the parameters $a, b, c$ given in terms of the angles of $T$ . For some particular rational values of the parameters $a, b, c$ the hypergeometric equation has finite monodromy and its solutions are rational functions, these special values are tabulated into the so-called Schwarz's list. In this context; the rational biholomorphisms we have given from the spherical triangles with angles $(\pi /2, \pi /2, \pi / m)$,  $(\pi /2, \pi /3, \pi / 3)$,  $(\pi /2, \pi /3, \pi / 4)$ and  $(\pi /2, \pi /3, \pi / 5)$, correspond precisely with the cases in Schwarz's list in which the associated triangle is spherical and its angles are integer quotients of $\pi$.

\section{Limits of K\"ahler-Einsten metrics with cone singularities} \label{CURVES}

\subsection{Singular points of plane complex curves}

Let $C=\{ f = 0 \} \subset \mathbb{C}^2$ be a complex curve with an isolated singularuty at $0$ and let $B$ be a small ball around the origin. Fix $ 0 < \beta < 1$;  we want to discuss possible notions of a K\"ahler metric $g$ on $B$ with cone angle $2 \pi \beta$ along $C$. Outside the origin there are standard definitions so that the key point is to say what is the behavior of $g$ at $0$. First thing to say is that we  want the volume form of $g$ to be locally integrable, moreover we also require that
$$ \mbox{Vol} (g) = G |f|^{2\beta -2} \Omega \wedge \overline{\Omega} $$
where  $G$ is a \emph{continuos} function and $\Omega = dzdw$ is the standard holomorphic volume form. This leads to the so-called complex singularity exponent of the curve $c_0 (f)$. The number $c_0 (f)$ is defined as the supremum of all $c>0$ such that $|f|^{-2c} \Omega \wedge \overline{\Omega}$ is locally integrable. This is always a rational number and can be computed in algebro-geometric terms by means of successive blow-ups of the singularity. It is clear that $0 < c_0 \leq 1$ and indeed $c_0 (f) =1$ only when the curve is smooth or has a simple double point at $0$. Write $f = P_d + (h.o.t.)$ with $P_d$ a homogeneous polynomial of degree $d$ and $(h.o.t.)$ meaning higher order terms; according to \cite{kollar}
\begin{equation}
	c_0 (f) = \frac{1}{d} + \frac{1}{e} , 
\end{equation}
where $e/d$ is the first Puiseux exponent of $f$ (see \cite{brieskorn}). In terms of cone angles we must take  
\begin{equation} \label{csec}
	\beta > 1 -c_0 .
\end{equation}
Indeed, if \ref{csec} holds then there is the notion of a \emph{weak K\"ahler metric} (see \cite{EGZ}). Pluri-potential theory provides, for any $ 1- c_0 < \beta < 1$, a weak K\"ahler metric $g$ with  $ \mbox{Vol}(g) =|f|^{2\beta -2} \Omega \wedge \overline{\Omega}$. A draw-back of this approach is that little can be said on the geometry of the metric, in particular there is no guarantee of the existence of a tangent cone at $0$. On the other hand if the metric $g$ arises as the Gromov-Hausdorff limit of a sequence of K\"ahler metrics with cone singularities along smooth curves then, under a suitable assumptions, it will have a tangent cone at $0$. We discuss plausible, stronger notions according to the type of singularity of $C$.

\begin{itemize}
	\item \emph{Ordinary multiple points.} In this case the zero set of $P_d$ consists of $d$ distinct complex lines and $c_0(f)=2/d$, so that $1-c_0 = (d-2)/d$. On the other hand the Troyanov condition \ref{Tcondition} is equivalent to
	$$ \frac{d-2}{d} < \beta < 1 $$
	when all the angles $\beta_i$ are equal to $\beta$. Therefore, for $\beta$ in this range we  have the flat metric $g_F$ given by Proposition \ref{Hfm}. Let us assume first that there  are suitable holomorphic coordinates around $0$ in which $C = \{ P_d =0 \}$; then we can require the condition that $\| g - g_F \|_{g_F} = O (r^{\epsilon})$ for some $\epsilon >0$ as $r \to 0$. Indeed if this condition holds in a little bit stronger H\"older sense, then it is straightforward to show that $g$ has a tangent cone at $0$ which agrees with $g_F$. In the general case such a holomorphic change of coordinates doesn't exists, but we can use a diffeomorphism $\Phi$ of the ball,  sufficiently close to the identity, which takes $C$ to the zero set of $P_d$. Indeed, in a small ball around the origin, the curve $C$ consists of $d$ branches, each of which is the graph of a holomophic function over one of the lines of $\{ P_d =0 \}$. It is then not hard to construct $\Phi$ by means of suitable cut-off functions, moreover $\Phi$ can be taken to be holomorphic in a suitable neighborhood of the curve.
	
	\item \emph{Non-ordinary multiple points.} Let us consider first, as a model example, the case of the cusp 
	
	$$C = \{ w^2 = z^3 \} .$$
	In this case the complex singularity exponent is equal to $c_0 =5/6$, so that $1-c_0 =1/6$. We have seen that for any $ 1/6 < \beta < 5/6$ we have the flat cone metric $\tilde{g_F}$ given by Proposition \ref{Sfm} with $\mbox{Vol}(\tilde{g}_F) = |w^2 - z^3|^{2\beta -2} \Omega \wedge \overline{\Omega}$. The same as before we can consider metrics $g$ which satisfy the condition $\| g - \tilde{g}_F \|_{\tilde{g}_F} = O (r^{\epsilon})$ for some $\epsilon >0$ as $r \to 0$; and if this condition holds in a  H\"older sense, then $g$  has a unique tangent cone at $0$ which agrees with $\tilde{g}_F$. The question is what to do when $ 5/6 \leq \beta < 1$. We look back at the picture, Figure \ref{Pacman},  of two cone angles of total angle $2\pi \beta$ that collide and produce a cone angle $2\pi \gamma$ with $\gamma = 2\beta -1$. We expect  that, in transverse directions to $C$, we should see two cone singularities coming together.  As a simple model consider the metric
	\begin{equation}
		g= |dz|^2 + |w^2 - z^3|^{2\beta-2} |dw|^2 .
	\end{equation}
	This is not a K\"ahler metric, but it has the right volume form. If we fix $z_0$ and embed $\mathbb{C}$ into $\mathbb{C}^2$ by means of $ \tau_{z_0} (w) = (z_0, w)$; then 
	$$ \tau_{z_0}^{*}g= |w-a|^{2\beta -2} |w+a|^{2\beta-2} |dw|^2, $$
	where $a^2=z_0^3$. This is a flat metric in $\mathbb{C}$ with two cone singularities of angle $2\pi\beta$ at $a$ and $-a$. If we let $z_0 \to 0$ then $a \to 0$ and $\tau_{0}^{*} g = |w|^{2\gamma -2} |dw|^2$ with $\gamma = 2\beta -1$. We shall see now that if $\beta >5/6$ then the tangent cone at $0$ of $g$ is the metric $g_{(\gamma)} = |dz|^2 + |dw|^{2\gamma -2} |dw|^2$. Let $D_{\lambda} (z,w)=(\lambda z, \lambda^{1/\gamma} w)$, so that $D_{\lambda} g_{(\gamma)} = \lambda^2 g_{(\gamma)}$. It requires a simple computation to check that
	$$ \lambda^{-2} D_{\lambda}^{*} g = |dz|^2 + |w^2 - \lambda^{3-2/\gamma} z^3|^{2\beta -2} |dw|^2 .$$
	We see that  $\lambda^{-2} D_{\lambda}^{*} g$ converges to $g_{(\gamma)}$ as $\lambda \to 0$ provided that $ 3- 2/\gamma >0$, which is the same to say $\beta >5/6$.
	
	The same discussion applies to the more general case of the curve $C=\{ w^m = z^n \}$ with $2 \leq m < n$. So that $c_0 = 1/m + 1/n$ and we have seen that for $1-1/m-1/n < \beta < 1-1/m +1/n$ there is a flat cone metric $\tilde{g}_F$ with volume form $|w^m -z^n|^{2\beta -2} \Omega \wedge \overline{\Omega}$. This time the tangent cone at $0$ should be given, when $ 1-1/m+1/n< \beta$,  by $g_{(\gamma)}$ with $1-\gamma = m (1- \beta)$.
	
	We have little to say in the general case of an arbitrary singularity. If $m$ is the order and $n/m$ is the first non-zero Puiseux exponent; then the intersection of the curve with small spheres around the origin is an iterated torus knot over the $(m, n)$-torus knot -see \cite{milnor}- and there is no homeomorphism which takes the curve to the singular set of a flat cone metric. Naively, we might expect that the tangent cone is the metric $\tilde{g}_F$ given by Proposition \ref{Sfm} with cone angle $\beta$ along $\{ w^m= z^n \}$ when $1-1/m-1/n < \beta < 1-1/m +1/n$; and it is $g_{(\gamma)}$ with $1-\gamma = m (1- \beta)$ when $ 1-1/m+1/n< \beta$.
	
\end{itemize}

\subsection{Blow-up analysis}
As we said in the Introduction,  the $L^2$-norm of the Riemannian curvature tensor of a K\"ahler-Einstein metric $g$ on a complex surface $X$ with cone angle $2\pi\beta$ along  a smooth curve $D$ is given in terms of topological data by means of the formula
\begin{equation} \label{comen}
	E(g) = \frac{1}{8\pi^2} \int |\mbox{Riem}|^2 = \chi (X) + (\beta-1) \chi (D) ,
\end{equation}
where $\chi$ denotes the Euler characteristic.
The number $E(g)$ is the so-called `energy' of $g$. In four real dimensions the energy is a scale-invariant quantity, i.e. $E(g) = E(\lambda g)$ for any $\lambda >0$. This fits the theory into a blow-up analysis framework which parallels the case of smooth Einstein metrics on four manifolds \cite{anderson}: The only possible way in which a non-collapsing sequence of solutions $g_i$ can degenerate is when the energy distributions  $|\mbox{Riem}(g_i)|^2$ develop Dirac deltas, this can happen only at finitely many points. Let $p$ be such a point. Re-scaling the metrics $g_i$ at $p$ in order to keep the Riemannian curvature bounded one gets in the limit a Ricci-flat metric with finite energy  on a non-compact space,  so-called `ALE gravitational  instanton' or, more generically, a `bubble'. There might be many different (but finite) blow-up limits at $p$; and these can be arranged into a `bubble tree' associated to $p$, the tangent cone at infinity of the `deepest bubble' in the tree agrees with the tangent cone of the (non-scaled) limiting solution at $p$. If we add the energy of the singular limit space with the energy of the blow-up limits we recover the energy of the original sequence $g_i$.

We are interested in the case of a non-collapsed sequence of K\"ahler-Einstein metrics $g_i$ on a complex surfaces $X_i$ with cone angle $2\pi\beta$ along smooth curves $D_i$ within some fixed numerical data. There is a weak K\"ahler-Einstein metric on the Gromov-Hausdorff limit $W$ with cone singularities along a Weil divisor $\Delta$. There is a decomposition $\Delta = \cup_{k=1}^M \Delta_k$ where $\Delta_k$ is the component of $\Delta$ of multiplicity $k$. For the sake of definiteness we consider the case when $p$ is a singular ordinary multiple-point of $\Delta_1$ which lies on the smooth part of $W$. The tangent cone of $W$ at $p$ must be given by Proposition \ref{Hfm} with $\beta_j = \beta$ for $j=1, \ldots d.$ The blow-up of the metrics $g_i$ at $p$ results in a Ricci-flat metric on $\mathbb{C}^2$ with cone angle $2\pi\beta$ along a complex curve of degree $d$ with $d$ distinct asymptotic lines and $g_F$ as its tangent cone at infinity. These metric were shown to exist in \cite{martin}. Yau's work on the Calabi conjecture has been extended to the setting of ALE and asymptotically conical manifolds -\cite{Joyce}, \cite{ConlonHein}-, and to the context of metrics with cone singularities -\cite{brendle}, \cite{JMR}-; the proof of the existence theorem in \cite{martin} is a mix of these articles.

\begin{remark} 
These blow-up limits of K\"ahler-Einstein metrics with cone singularities arose first in the context of the `deformation of the cone angle method' used to establish the existence of K\"ahler-Einstein metrics on K-stable Fano manifolds, see \cite{donaldson1}. Let $X$ be a Fano manifold and $D \subset X$ a smooth anti-canonical divisor, it is known that for small values of $\beta$ there is a KE metric on $X$ with cone angle $2 \pi \beta$ along $D$. The question is to understand the behavior of these metrics as $\beta$ increases in manifolds which are not $K$-stable. We refer to \cite{donaldson1} and \cite{szek} for a discussion when $X$ is the complex projective plane blown-up at one and two points. 
\end{remark}

There is a well-known formula for the energy of an ALE gravitational instanton. If $\Gamma $ is a finite subgroup of $SU(2)$ acting freely on $S^3$ and $g$ is a Ricci-flat metric on $M$ asymptotic to the cone over $S^3/ \Gamma$, then
\begin{equation} \label{alen}
	E(g) = \chi (M) - \frac{1}{|\Gamma|} ,
\end{equation} 
where $|\Gamma|$ is the order of $\Gamma$. Note that $1/|\Gamma|$ is the volume ratio $\mbox{Vol}(S^3/ \Gamma )/ \mbox{Vol}(S^3)$. Let now $g_{RF}$ be a Ricci-flat K\"ahler metric on $\mathbb{C}^2$ with cone angle $2\pi\beta$ along the smooth complex curve $C \subset \mathbb{C}^2$. Let the tangent cone at infinity be the cone over the spherical metric  on the 3-sphere $\overline{g}$ with cone singularities. Under suitable assumptions on the regularity and asymptotic behavior of $g_{RF}$ it is reasonable to expect that the energy is given by a formula which mixes \ref{comen} and \ref{alen} (see \cite{martin})
\begin{equation} \label{ENERGY}
	E (g_{RF}) = 1 + (\beta -1) \chi (C) - \frac{\mbox{Vol}(\overline{g})}{2\pi^2} .
\end{equation}
The number $1$ is the Euler characteristic of a ball. As we mentioned before, the volume ratio $\nu = \mbox{Vol}(\overline{g}) / 2 \pi^2$ measures how bad the singularity of the limit $W$ is at $p$. The energy of the bubbles at $p$ are bigger as $\nu$ is smaller. 

On the other hand; a straight-forward application of the Bishop-Gromov volume monotonicity formula gives us a lower bound on the volume ratio, see \cite{OSS}. If we  assume that the curves $D_i$ all lie in the linear system $H^0(L)$, then
\begin{equation} \label{bishgro}
	\nu \geq \frac{1}{9} (c_1(X) - (1-\beta)c_1(L))^2 .
\end{equation}
This inequality can be used to rule out, for example, the degeneration of KE metrics in $\mathbb{CP}^2$ with cone angle $2\pi\beta$ along smooth cubics to a cubic with a cuspidal point $\{w^2 = z^3 \}$. Indeed at such a singular point the tangent cone should be $\mathbb{C}_{\gamma} \times \mathbb{C}$ with $\gamma = 2\beta -1$ when $\beta > 5/6$, so that $\nu = 2\beta -1$. Replacing this into \ref{bishgro} we get   
\begin{equation*}
2\beta -1 \geq \beta^2 , 
\end{equation*}
which holds only when $\beta=1$.

\emph{Line Arrangements.}
Consider  a collection of lines $L_1, \ldots, L_k$  in $\mathbb{CP}^2$.  An $r$-tuple point is a point where $r$ lines of the arrangement meet, we denote by $t_r$ the number of $r$-tuple points. Since any two lines meet at exactly one point we get the identity

\begin{equation*}
\frac{k(k-1)}{2} = \sum_{r \geq 2} t_r \frac{r (r-1)}{2} .
\end{equation*}

The arrangement is said to have the Hirzebruch property if $k=3n$, $n\geq 2$ and each line intersects the others at exactly $n+1$ points. Such arrangements where considered by Hirzebruch \cite{Hir}, in a construction of compact quotients of the unit ball as ramified covers of the projective plane.  It was shown by Panov in \cite{panov} that there is a polyhedral K\"ahler metric with cone angle $\beta = \frac{n-1}{n}$ along any of these arrangements, which is unique up to scale. The only known examples of arrangements which satisfy the Hirzebruch property so far are associated with unitary reflection groups of $U(3)$, there are two infinite families and five exceptional cases. The simplest example of these arrangements, $A_0(2)$, consists of the extended sides of a triangle together with its three bisectrices.

Let $P_0$ be a homogeneous polynomial of degree $k=3n$ such that $\{P_0 =0\} \subset \mathbb{CP}^2$ is a line arrangement $L_1, \ldots, L_k$ with the Hirzebruch property and let $g_0$ be the polyhedral K\"ahler metric with cone angle $\beta = \frac{n-1}{n}$ along the arrangement. Let $C_{\epsilon} = \{P_{\epsilon}=0 \} \subset \mathbb{CP}^2$ for $\epsilon >0$ be a family of smooth curves of degree $k$ which converge to the arrangement as $\epsilon \to 0$ and let $g_{\epsilon}$ be the Ricci-flat metric in the projective plane with cone angle $2\pi\beta$ along the curve $C_{\epsilon}$. One might expect that, under suitable hypothesis, the metrics $g_{\epsilon}$ converge to $g_0$ in the Gromov-Hausdorff sense as $\epsilon \to 0$. Write $E$ for the energy of the metrics $g_{\epsilon}$, which can be computed from \ref{comen} and the degree-genus formula. Let $E_r$ be the energy of a Ricci-flat metric on $\mathbb{C}^2$ with cone angle $2\pi \beta$ along a smooth curve of degree $r$ with $r$ different asymptotic lines given by equation \ref{ENERGY}. Since
the metric $g_0$ is flat, and therefore it has $0$ energy, one would expect the identity
\begin{equation} \label{lines energy}
E = \sum_{r} t_r E_r 
\end{equation}
in the absence of bubble tree phenomena. It follows from straightforward computations that equation \ref{lines energy} holds for all the listed arrangements:

\begin{itemize}
	\item The family $A_0 (m)$. 
	\begin{equation*}
	k=3m, \hspace{3mm} t_3= m^2, \hspace{3mm} t_m =3
	\end{equation*}
	
	\begin{equation*}
	\beta=\frac{m-1}{m}, \hspace{3mm} E= 9m-6, 
	\end{equation*}
	
	\begin{equation*}
	E_3= 1 + \frac{3}{m} - \frac{(2m-3)^2}{4m^2}, \hspace{3mm} E_m = 1 + \frac{m^2-2m}{m} -\frac{1}{4}, 
	\end{equation*}
	
	\begin{equation*}
	E= m^2 E_3 + 3 E_m .
	\end{equation*}
	
	\item The family $A_3 (m)$, $m \geq 2$. 
	\begin{equation*}
	k=3m +3, \hspace{3mm} t_2= 3m, \hspace{3mm} t_3 =m^2, \hspace{3mm} t_{m+2}=3
	\end{equation*}
		
	\begin{equation*}
	\beta=\frac{m}{m+1}, \hspace{3mm} E= 3 + \frac{(3m+2)(3m+1)-2}{m+1}, 
	\end{equation*}
	
	\begin{equation*}
    E_2 = 1 - \frac{m^2}{(m+1)^2}, \hspace{3mm} E_3= 1 + \frac{3}{m+1} - \frac{(2m-1)^2}{4(m+1)^2}, \hspace{3mm} E_{m+2} = 1 + \frac{(m+2)^2-2m -4}{m+1} -\frac{m^2}{4(m+1)^2}, 
	\end{equation*}
	
	\begin{equation*}
	E= 3m E_2 + m^2 E_3 + 3 E_{m+2}.
	\end{equation*}

	\item The Hesse arrangement. 
	\begin{equation*}
	k=12, \hspace{3mm} t_2= 12, \hspace{3mm} t_4 =9
	\end{equation*}

	\begin{equation*}
	\beta=\frac{3}{4}, \hspace{3mm} E= 3 + \frac{11\cdot 10 -2}{4}=30, 
	\end{equation*}
	
	\begin{equation*}
	E_2= 1 - \frac{9}{16}, \hspace{3mm} E_4 = 1 + \frac{16-8}{4} -\frac{1}{4}, 
	\end{equation*}
	
	\begin{equation*}
	E= 12 E_2 + 9 E_4 .
	\end{equation*}

    \item The extended Hesse arrangement. 
	\begin{equation*}
	k=21, \hspace{3mm} t_2= 36, \hspace{3mm} t_4 =9, \hspace{3mm} t_5=12
	\end{equation*}

	\begin{equation*}
	\beta=\frac{6}{7}, \hspace{3mm} E= 57, 
	\end{equation*}
	
	\begin{equation*}
	E_2 = 1 - \frac{6^2}{7^2}, \hspace{3mm} E_4 = 1 + \frac{8}{7} -\frac{10^2}{4 \cdot 7^2}, \hspace{3mm} E_5 = 1+ \frac{15}{7} - \frac{9^2}{4 \cdot 7^2}
	\end{equation*}
	
	\begin{equation*}
	E = 36 E_2 + 9 E_4 + 12 E_5 .
	\end{equation*}

	\item The icosahedral arrangement. 
	\begin{equation*}
	k=15, \hspace{3mm} t_2= 15, \hspace{3mm} t_3 =10, \hspace{3mm} t_5=6
	\end{equation*}

	\begin{equation*}
	\beta=\frac{4}{5}, \hspace{3mm} E= 3+ \frac{14\cdot 13 -2}{5} =39, 
	\end{equation*}
	
	\begin{equation*}
	E_2= 1 - \frac{16}{25}, \hspace{3mm} E_3 = 1 + \frac{3}{5} -\frac{49}{100}, \hspace{3mm} E_5= 1+ \frac{15}{5} - \frac{1}{4}, 
	\end{equation*}
	
	\begin{equation*}
	E= 15 E_2 + 10 E_3 + 6 E_5.
	\end{equation*}

	\item $G_{168}$ arrangement. 
	\begin{equation*}
	k=21, \hspace{3mm} t_3= 28, \hspace{3mm} t_4 =21
	\end{equation*}

	\begin{equation*}
	\beta=\frac{6}{7}, \hspace{3mm} E= 3 + \frac{20 \cdot 19 -2}{7}=57, 
	\end{equation*}
	
	\begin{equation*}
	E_3= 1 + \frac{3}{7} - \frac{11^2}{4 \cdot 7^2}, \hspace{3mm} E_4 = 1 + \frac{8}{7} -\frac{10^2}{4 \cdot 7^2}, 
	\end{equation*}
	
	\begin{equation*}
	\hspace{3mm} E= 28 E_3 + 21 E_4 .
	\end{equation*}

	\item The $A_6$ configuration. 
	\begin{equation*}
	k=45, \hspace{3mm} t_3= 120, \hspace{3mm} t_4 =45, \hspace{3mm} t_5=36
	\end{equation*}

	\begin{equation*}
	\beta=\frac{14}{15}, \hspace{3mm} E= 3+ \frac{44 \cdot 43 -2}{15}= 129, 
	\end{equation*}
	
	\begin{equation*}
	 E_3= 1 + \frac{3}{15} - \frac{27^2}{4 \cdot 15^2}, \hspace{3mm} E_4= 1 + \frac{8}{15} - \frac{26^2}{4 \cdot 15^2}, \hspace{3mm} E_5 = 1 + \frac{15}{15} -\frac{25^2}{4 \cdot 15^2}, 
	\end{equation*}
	
	\begin{equation*}
	E= 120 E_3 + 45 E_4 + 36 E_5.
	\end{equation*}
		
\end{itemize}

Indeed; \ref{lines energy}  should be equivalent to Equation 3 in Theorem 1.11 of \cite{panov}, which expresses the Euler characteristic of the projective plane in terms of the residues of the flat logarithmic connection corresponding to the PK metric.

\subsection{Moduli Spaces}
In general lines we can say that if we fix $\beta \in (0,1)$ and consider pairs $(X, D)$ of a complex manifold together with a smooth divisor which satisfy some fixed numerical data, then the Gromov-Hausdorff compactification $\overline{\mathcal{M}}^{GH}_{\beta}$ of the moduli space of KE metrics on $X$ with cone angle $2\pi\beta$ along $D$ is expected to agree with a suitable algebraic compactification. Of course these compactifications depend on the parameter $\beta$ but, since they agree on the open subset of smooth divisors, all of them are birrationaly equivalent. More precisely, there should be a discrete sequence of cone angles $ 0 < \ldots < \beta_2 < \beta_1 < \beta_0 =1$ such that the spaces  $\overline{\mathcal{M}}^{GH}_{\beta}$ are all isomorphic if $\beta \in (\beta_i, \beta_{i-1})$ and  if $\tilde{\beta} \in (\beta_{i+1}, \beta_i)$ then   $\overline{\mathcal{M}}^{GH}_{\tilde{\beta}}$ is obtained from  $\overline{\mathcal{M}}^{GH}_{\beta}$ by means of a suitable blow-up. The situation can be compared with that of variations of GIT quotients. 

It is shown in \cite{OSS} that the GIT compactiﬁcation of cubic surfaces in \( \mathbb{CP}^3 \) corresponds to the Gromov-Hausdorff compactification of the space of KE metrics on del Pezzo surfaces of degree three. On the other hand; Garcia-Gallardo \cite{garcia} used GIT to construct, for each $ t \in [0, 1] \cap \mathbb{Q}$, a compactification of the moduli space of cubic surfaces together with anticanonical divisors. It is expected that these agree with the Gromov-Hausdorff compactifications of the space of corresponding KEcs metrics with cone angle \( \beta = 1-t \), see \cite{garcia}.

A somewhat different case is that of smooth curves of degree $n \geq 3$ in $\mathbb{CP}^2$. It is known that for $(n-3)/n < \beta < 1$ there is a KEcs, unique up to scale, on the projective plane with cone angle $2\pi\beta$ along a given smooth curve of degree $n$. The set $\mathcal{A}$ of all these curves in the projective plane modulo projective transformations has the structure of an affine algebraic variety and a natural GIT compactification
$$ \overline{\mathcal{A}}_{GIT} = \mathbb{P}(\mbox{Sym}^n\mathbb{C}^3) //SL(3, \mathbb{C}) .$$
It is expected that for $\beta$ sufficiently close to $1$ it holds that $\overline{\mathcal{M}}^{GH}_{\beta} \cong \overline{\mathcal{A}}_{GIT} $. We illustrate these ideas in the particular cases of $n=3$ and $n=4$.

\subsection*{$n=3$: Elliptic curves in $\mathbb{CP}^2$} $\mathcal{A} \cong \mathbb{C}$ and there is only one algebraic compactification  obtained by adding a single point. In the GIT compactification, $ \overline{\mathcal{A}}_{GIT} = \mathbb{P}(\mbox{Sym}^3\mathbb{C}^3) //SL(3, \mathbb{C}) \cong \mathbb{CP}^1$, this extra point is represented by the polystable curve $C_0 = \{ x_0 x_1x_2=0 \} \subset \mathbb{CP}^2$. On the other hand, if we fix  $0<\beta<1$, there is an explicit KE (indeed constant holomorphic sectional curvature) metric $g_0$ with cone angle $2\pi \beta$ along $C_0$ obtained as a K\"ahler quotient of $(\mathbb{C}_{\beta}^*)^3$ by the $S^1$ action $e^{i\theta}(x_1, x_2, x_3)=(e^{i\theta}x_1, e^{i\theta}x_2, e^{i\theta}x_3)$;  if $\beta=1/k$ then $g_0$ is the push forward of the Fubini-Study metric under the map $ [x_0, x_1, x_2] \to [x_0^k, x_1^k, x_2^k]$. The curve $C_0$ has three ordinary double point singularities and the tangent cone of $g_0$ at any of these points is $\mathbb{C}_{\beta} \times \mathbb{C}_{\beta}$. We expect that $\overline{\mathcal{M}}^{GH}_{\beta} = \mathcal{M}_{\beta} \cup \{g_0\} $.
	
	We relate this picture to the blow-up phenomena discussed in the previous section. Let
	
	$$ C_{\epsilon} =  \lbrace x_0 x_1 x_2 - \epsilon (x_0^3 + x_1^3 + x_2^3) = 0 \rbrace .$$
	These are smooth pair-wise non-isomorphic elliptic curves for small $\epsilon>0$. Set $g_{\epsilon}$ to be the corresponding metrics in $\mathcal{M}_{\beta}$. We expect that $g_{\epsilon} \to g_0$ in the Gromov-Hausdorff sense as $\epsilon \to 0$.
	Take coordinates centered at $p=[0,0,1]$  given by $(u, v) \to [u,v,1]$, so that 
	$ C_{\epsilon}= \lbrace uv= \epsilon(u^3 +v^3 + 1) \rbrace$.
	Write $ u = \sqrt{\epsilon} z$ and $ v = \sqrt{\epsilon} w$ so that 
	$$C_{\epsilon}= \lbrace zw= \epsilon^{3/2} z^3 + \epsilon^{3/2} w^3 + 1 \rbrace .$$
	In the coordinates $(z, w)$ the curves $C_{\epsilon}$ converge to $C=\{ zw=1\}$ as $\epsilon \to 0$. By means of the Gibbons-Hawking ansatz, Donaldson \cite{donaldson1} constructed a Ricci-flat K\"ahler metric $g_{RF}$ on $\mathbb{C}^2$  invariant under the $S^1$-action $e^{i\theta}(z, w) = (e^{i\theta} z, e^{-i\theta} w)$ with cone angle $2\pi\beta$ along $C = \lbrace zw =1 \rbrace$ and tangent cone at infinity $\mathbb{C}_{\beta} \times \mathbb{C}_{\beta}$. We expect that  $ (\mathbb{CP}^2, \lambda_{\epsilon} g_{\epsilon}, p) \to (\mathbb{C}^2, g_{RF}, 0)$  in the pointed Gromov-Hausdorff sense, where $\lambda_{\epsilon}$ is a fixed constant multiple of $|\mbox{Riem}(g_{\epsilon})|(p)$. The same discussion applies if $p$ is replaced $[1,0,0]$ or $[0,1,0]$. Donaldson computed the Riemann curvature tensor of $g_{RF}$ and it is not hard from here to compute the energy of the metric to obtain $E(g_{RF})=1-\beta^2$ (which agrees with \ref{ENERGY} since $\chi(C)=0$ and $\mbox{Vol}(\overline{g}) = 2\pi^2 \beta^2$). On the other hand, by \ref{comen}, $E(g_{\epsilon})= \chi(\mathbb{CP}^2) + (\beta-1)\chi(C_{\epsilon})=3$. It is easy to show that $E(g_0)=3\beta^2$. Our speculations are then compatible with the fact that  
	
	$$E(g_{\epsilon}) - E(g_0) = 3- 3\beta^2 = 3 (1-\beta^2)=3E(g_{RF}) .$$
	The limiting metric $g_0$ has less energy than the metrics in the family $g_{\epsilon}$ and the energy lost is due to the formation of three bubbles $g_{RF}$ at the double points of $C_0$.

	\subsection*{$n=4$: Genus $3$ curves in $\mathbb{CP}^2$} The affine variety parameterizing smooth quartic curves modulo projective transformations has complex dimension six,  $\dim_{\mathbb{C}} \mathcal{A}=6$.  The geometric invariant theory for quartic curves is well-understood: The stable points of $\overline{\mathcal{A}}_{GIT}$ parametrize quartic curves with at worst singularities of type $A_1$ or $A_2$; the polystable points  form a $1$-dimensional family which parametrizes either the double conic $Y_1 = \{ Q^2=0\} \subset \mathbb{CP}^2$, where $Q=x_2^2 + x_0 x_1$, or a union of two reduced conics that are tangential at two points and at least one of them is smooth  $Y_{\lambda} = \{P_{\lambda} =0\}$ where $P_{\lambda}= (\lambda_1 x_2^2 + x_0 x_1) (\lambda_2 x_2^2 +  x_0 x_1)$ with $\lambda = [\lambda_1, \lambda_2] \in \mathbb{CP}^1 \setminus \{[1,1]\}$. Note that $[\lambda_1, \lambda_2]$ parametrizes the same curve as $[\lambda_2, \lambda_1]$; when $\lambda$ is $0 = [0,1]$ or $\infty = [1,0]$ the curve $Y_0 =Y_{\infty}$ is referred as the ox, otherwise it is called a cateye (the names are suggested by the graphs of the curves). A cateye has two $A_3$ singularities (also known as tacnodes); the ox has two tacnodes and one $A_1$ singularity. 
	
	Set $\overline{\mathcal{A}}_{1/2}$ to be the blow-up of $\overline{\mathcal{A}}_{GIT}$ at the double conic. If $ E \subset \overline{\mathcal{A}}_{1/2}$ denotes the exceptional divisor, then $E$ is identified with the space of hyperelliptic Riemann surfaces of genus $3$ or equivalently the GIT quotient of the space of eight unordered points in the Riemann sphere by the action of M\"obius transformations
	\begin{equation*}
	E \cong \mathbb{P}(\mbox{Sym}^8\mathbb{C}^2) //SL(2, \mathbb{C}) .
	\end{equation*}
	A point $p \in E$ is represented by a homogeneous degree $8$ polynomial in two variables $p = [f_8]$ and $\{f_8 =0\} \subset \mathbb{CP}^1$ is a configuration of (at most eight) points with multiplicities, $p$ is stable if each point has multiplicity at most three and there is only one polystable point which corresponds to the configuration of two points with multiplicity four. The point $p$ parametrizes the degree $8$  curve $Y_p = \{x_2^2= f_8 (x_0, x_1) \} \subset \mathbb{P}(1,1,4)$. The curve $Y_p$ does not go through the $\frac{1}{4} (1,1)$-orbifold point $[0,0,1] \in \mathbb{P} (1,1,4)$; if $p$ is stable the curve $Y_p$ has at worst singularities of type $A_1$ and $A_2$ (which correspond to points in the configuration of multiplicity $2$ and $3$ respectively), the polystable point parametrizes the curve $ \{ x_2^2 = x_0^4 x_1^4 \} \subset \mathbb{P}(1,1,4) $ with two tacnodal singularities at the points $[1,0,0]$ and $[0,1,0]$.
	 
	There is a classical dichotomy for Riemann surfaces of genus $3$: Either the bi-canonical map defines an embedding of the curve in the projective plane or the canonical map is a degree two map to the projective line branched at eight points (the hyperelliptic case). Therefore $\overline{\mathcal{A}}_{1/2}$ is a compactification of the space of genus $3$ Riemann surfaces.
	
	On the other hand, the anti-canonical map of any degree two Del Pezzo surface $X$ defines a $2$-sheeted covering of the projective plane branched along a smooth quartic curve and vice-versa; the deck transformation $\sigma$ is known as the Geiser involution. There is a  KE metric on $X$ -unique up to scale- which is necessarily invariant under $\sigma$; the push-forward of this metric to the projective plane is a KE metric with cone angle $\pi$ ($\beta = 1/2$) along the  quartic. It is shown in \cite{OSS} that
	
	\begin{equation*}
	\overline{\mathcal{M}}^{GH}_{1/2} \cong \overline{\mathcal{A}}_{1/2} ,
	\end{equation*}
	in the sense that the natural map which sends a K\"ahler metric to its parallel complex structure defines a homeomorphism. The curve singularities which appear at limit spaces are either of type $A_1, A_2$ or $A_3$, and we have discussed in detail their tangent cones -for \( \beta = 1/2 \)- in Subsection \ref{URG}.   
	
	Fix now $\beta$ sufficiently close to $1$ and let $1 - \gamma=2(1-\beta)$. For $\gamma>1/4$ there is a unique KE metric $g_0$ in $2\pi c_1(\mathbb{CP}^2)$ with positive Ricci curvature and cone angle $2\pi\gamma$ along the conic $C_0=\{Q=0 \}$ (see \cite{LiSong}). Let $F$ be a generic polynomial of degree $4$ and let $C_{\epsilon} = \lbrace Q^2 - \epsilon F \rbrace =0 $. Write $Z = \lbrace F =0 \rbrace$, so that for a typical $F$ the intersection $Z \cap C_0$ consists of $8$ distinct points $p_1, \ldots,  p_8$. For small and non-zero $\epsilon$ the curve $C_{\epsilon}$ is smooth; orthogonal projection to $C_0$ is an `approximately' holomorphic double cover from $C_{\epsilon}$ to $C_0$, branched over the points $\{ p_1, \ldots p_8 \}$. Let $g_{\epsilon}$ be the metric on $\mathcal{M}_{\beta}$ corresponding to $C_{\epsilon}$; we expect that $g_{\epsilon} \to g_0$ in the Gromov-Hausdorff sense as $\epsilon \to 0$. The divisor $E$ in $\overline{\mathcal{M}}^{GH}_{1/2}$ should then be contracted to a point in $\overline{\mathcal{M}}^{GH}_{\beta}$. We say a few words on the blow-up limits that might arise in this situation.  Let $C = \lbrace w = z^2 \rbrace \subset \mathbb{C}^2$; there should be a Ricci-flat metric $g_{RF}$ with cone angle $2\pi\beta$ along $C$ asymptotic to the cone $\mathbb{C}_{\gamma} \times \mathbb{C}$. The energy of $g_{RF}$ should be given by  \ref{ENERGY} :
	
	\begin{equation*} \label{EP}
		E (g_{RF}) = 1 + (\beta -1) - \gamma = 1- \beta .
	\end{equation*}
	We expect that if we re-scale $g_{\epsilon}$ around small balls centered at any of the points $p_1, \ldots p_8$ we  get the metric $g_{RF}$ in the limit as $\epsilon \to 0$. We know what is the energy of the metrics $g_{\epsilon}$ and $g_0$:
	
	$$ E(g_{\epsilon}) = 3 + (\beta -1) \chi (C_{\epsilon})= 3 + (\beta -1) (-4) = 7 -4\beta  $$
	
	$$ E(g_0) = 3 + (\gamma -1) \chi (C_0)= 3 + (2\beta-2 ) 2 = 4\beta -1 . $$
	Our speculation is in agreement with the fact that
	
	\begin{equation*} \label{energy lost}
		E(g_{\epsilon}) - E(g_0) = 8 (1-\beta) =8 E(g_{RF}) .
	\end{equation*}
	When $Z \cap C_0$ consists of less than eight points, we might expect to see a bubble tree phenomena at the multiple points of the intersection.

\bibliographystyle{plain}
\bibliography{ref}

\end{document}